\documentclass[11pt]{amsart}
\usepackage{amsmath,amsthm,amsfonts,amssymb,mathrsfs,mathtools}
\usepackage{enumerate}
\usepackage[colorlinks=true, linkcolor=blue, citecolor=magenta, menucolor=black, hypertexnames=false]{hyperref}
\usepackage{graphicx}
\usepackage[up]{caption}
\usepackage{subcaption}
\usepackage{pgf,tikz}
\usepackage[toc,page]{appendix}
\usetikzlibrary{arrows}
\usetikzlibrary{patterns}
\usepackage{color}
\usepackage{comment}
\numberwithin{equation}{section}
\usetikzlibrary{arrows.meta}

\newcommand{\C}{\mathbb{C}}

\newcommand{\N}{\mathbb{N}}
\newcommand{\F}{\mathbb{F}}

\newcommand{\Aut}{\operatorname{Aut}}
\newcommand{\Isom}{\operatorname{Isom}}
\newcommand{\Ch}{\operatorname{Ch}}
\newcommand{\Prob}{\operatorname{Prob}}
\newcommand{\UCP}{\operatorname{UCP}}

\newcommand{\rC}{\operatorname{C}}
\newcommand{\rL}{\operatorname{L}}
\newcommand{\rE}{\operatorname{E}}
\newcommand{\Fix}{\operatorname{Fix}}
\newcommand{\Stab}{\operatorname{Stab}}
\newcommand{\Rad}{\operatorname{Rad}}
\newcommand{\supp}{\operatorname{supp}}

\newcommand{\Pois}{\operatorname{Pois}}
\newcommand{\Tr}{\operatorname{Tr}}
\newcommand{\Char}{\operatorname{Char}}

\newcommand{\cS}{\mathscr{S}}
\newcommand{\cU}{\mathscr{U}}
\newcommand{\dd}{\mathop{}\!\mathrm{d}}
\renewcommand{\bar}[1]{\overline{#1}}

\newtheorem{introresult}{Theorem}
\newtheorem{introcorollary}[introresult]{Corollary}

\newtheorem{theorem}{Theorem}[section]

\newtheorem{proposition}[theorem]{Proposition}
\newtheorem{lemma}[theorem]{Lemma}
\newtheorem{corollary}[theorem]{Corollary}

\theoremstyle{definition}
\newtheorem{definition}[theorem]{Definition}

\newtheorem{remark}[theorem]{Remark}

\begin{document}
	
	\title[Charmenability of twin building lattices]{Charmenability of twin building lattices}
	
	\author{Cyril Houdayer}
	\author{Corentin Le Bars}
	
	\address{\'Ecole normale sup\'erieure \\ D\'epartement de math\'ematiques et applications \\ Universit\'e Paris-Saclay \\ 45 rue d'Ulm \\ 75230 Paris Cedex 05 \\ France}
	\email{cyril.houdayer@ens.psl.eu}
	
	\address{\'Ecole normale sup\'erieure \\ D\'epartement de math\'ematiques et applications \\ 45 rue d'Ulm \\ 75230 Paris Cedex 05 \\ France}
	\email{corentin.le.bars@math.ens.psl.eu}
	
	\thanks{Research supported by ERC Advanced Grant NET 101141693.}
	
	\subjclass[2020]{20F55, 22D25, 37B05, 46L55, 60J50}
	
	\keywords{Kac--Moody groups, twin building lattices, charmenability, charfiniteness, invariant random subgroups, uniformly recurrent subgroups, Howe--Moore property, Poisson boundaries, hyperbolic buildings}
	
	\begin{abstract}
		Let $\Lambda = \mathscr G_A(\F_q)$ be a minimal simply connected split Kac--Moody group of compact-hyperbolic type, and set $\Gamma=\Lambda/Z(\Lambda)$. We prove that $\Gamma$
		is charfinite when $q\geq 11$, or when its Weyl group is $3$-spherical and $q\geq 4$. Here are a few consequences: the only characters of $\Gamma$ are the trivial and regular characters, every invariant random subgroup is supported on $\{e\}$ and $\Gamma$ and these are the only uniformly recurrent subgroups, and every nontrivial unitary $\Gamma$-representation weakly contains the regular representation. 
		
		Our approach relies on a recent result due to Caprace and Thom, showing that the topological completions $G_+$ and $G_-$ associated to $\Gamma$ have the Howe--Moore property \cite{CT26}. We prove a criterion of charmenability for irreducible lattices with trivial amenable radical in products of Howe--Moore groups acting geometrically on proper hyperbolic spaces. The criterion requires the visual boundaries to be Poisson boundaries of suitable random walks and the corresponding lattice actions to be essentially free. 
	\end{abstract}
	
	\maketitle

	\section{Introduction}
	\subsection{Kac--Moody groups: charfiniteness and rigidity}
	
	Let \(\Delta\) be a countable group. Let \(\mathscr P(\Delta)\subseteq\ell^\infty(\Delta)\) be the weak-$\ast$ compact convex set of normalized positive definite functions. The group
	\(\Delta\) acts on \(\mathscr P(\Delta)\) by conjugation. A \emph{trace} on \(\Delta\) is a fixed point of this action. We denote by \(\Tr(\Delta)\) the space of traces, and we call its extreme points \emph{characters}. Following \cite{BBHP22}, the group \(\Delta\) is \emph{charmenable} if every nonempty compact convex \(\Delta\)-invariant subset of \(\mathscr P(\Delta)\) contains a trace and every character is either supported on the amenable radical or generates an amenable von Neumann algebra. It is \emph{charfinite} if, in addition, its amenable radical is finite, it has only finitely many isomorphism classes of unitary representations in each finite dimension, and every amenable character is finite dimensional. We refer to Section~\ref{sec:charmenability} for details. A charmenable group with finite amenable radical and property~{\rm (T)} is charfinite \cite[p.~299]{BBH23}.
	
	Recent works showed that higher-rank lattices in semisimple algebraic groups are charmenable, and those with property~{\rm (T)} are charfinite \cite{BH21,BBHP22,BBH23}. The purpose of this paper is to establish analogous results for lattices in twin buildings. These lattices are non-linear in general, and require new techniques. 
	
	Let \(A=(a_{st})_{s,t\in S}\) be an indecomposable generalized Cartan matrix of compact-hyperbolic type. Its Coxeter system \((W,S)\) is an irreducible cocompact hyperbolic reflection group and every proper standard parabolic subgroup of \(W\) is finite. For a prime power \(q\), let
	\[
	\Lambda=\mathscr G_A(\F_q)
	\]
	be the minimal simply connected split Kac--Moody group, and put \(\Gamma=\Lambda/Z(\Lambda)\). The group $\Lambda$ naturally induces a twin BN-pair, hence acts strongly transitively on a twin building $(X_+, X_-)$, where $X_\varepsilon$ is a building of type $(W,S)$. The group \(\Gamma\) acts faithfully on the two halves \(X_+\) and \(X_-\) of its twin building and embeds as an irreducible lattice in the product \(G_+\times G_-\) of their topological completions \cite{CR09}. Under some conditions on $q$ and the root groups, \(\Gamma\) is infinite, simple, finitely presented and has property~{\rm (T)} \cite{CR09}. A recent result due to Caprace and Thom shows that for $q \geq 11$ or when $W$ is \(3\)-spherical and \(q\geq4\), the groups \(G_+\) and \(G_-\) have the Howe--Moore property \cite{CT26}.
	
	One of the main results of the paper is that $\Gamma$ is charfinite. 
	\begin{introresult}
		\label{thm:KM-main-intro}
		Assume that $q\geq11 $ or that $W$ is \(3\)-spherical and \(q\geq4\). Then \(\Gamma\) is charfinite.
	\end{introresult}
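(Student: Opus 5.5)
The plan is to obtain charfiniteness of $\Gamma$ from charmenability, using the observation quoted above from \cite[p.~299]{BBH23}: a charmenable group with finite amenable radical and property~{\rm (T)} is charfinite. There are three ingredients. First, $\Gamma$ has property~{\rm (T)} and trivial amenable radical in our range of $q$. Second, $\Gamma$ satisfies the hypotheses of the general charmenability criterion for irreducible lattices in products of Howe--Moore groups acting on hyperbolic spaces. Third, we apply that criterion. For the first ingredient, I would invoke \cite{CR09}. Under the stated bounds on $q$, the group $\Gamma$ is infinite and simple, so its amenable radical is trivial; it also has property~{\rm (T)}. If the bounds in \cite{CR09} do not fully cover $q\geq 4$ in the $3$-spherical case, property~{\rm (T)} of $\Gamma$ can instead be deduced from property~{\rm (T)} of $G_\pm$ (Dymara--Januszkiewicz type spectral gap estimates) together with the fact that $\Gamma$ is a lattice in $G_+\times G_-$.

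Next I would set up the geometry. Since $W$ is a cocompact hyperbolic reflection group, the Davis realization of each $X_\varepsilon$ is a proper CAT($-1$), hence Gromov hyperbolic, space. The group $G_\varepsilon$ acts on it properly and cocompactly, because it acts chamber-transitively with compact open chamber stabilizers. By \cite{CT26}, $G_\pm$ have the Howe--Moore property for our $q$, and $\Gamma<G_+\times G_-$ is an irreducible lattice by \cite{CR09}. It remains to check the two dynamical hypotheses of the criterion. The first is that $\partial X_\varepsilon$, with a suitable harmonic measure, is the Poisson boundary of some random walk. I would use a symmetric, finitely supported, generating measure on $\Gamma$, or a spread-out measure on $G_\varepsilon$ pulled back to the lattice. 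Kaimanovich's strip criterion then identifies the Gromov boundary with the Poisson boundary, since the action on the hyperbolic space $X_\varepsilon$ is proper and cocompact for $G_\varepsilon$ and the lattice projections are dense.

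The step I expect to be the main obstacle is the second hypothesis: essential freeness of the $\Gamma$-action on $(\partial X_\varepsilon,\nu_\varepsilon)$. The plan is as follows. A nontrivial $\gamma\in\Gamma$ fixing a positive-measure set of boundary points would, by ergodicity and since $\Gamma$ is a countable union of such elements, force a positive-measure subset of $\partial X_\varepsilon$ to lie in the fixed set of a single element. I would analyse fixed sets of elements of $G_\varepsilon$ on the boundary of a hyperbolic building. For hyperbolic elements, the fixed set is just the two endpoints of the axis, which has measure zero because $\nu_\varepsilon$ is nonatomic. For elliptic elements, the fixed set is the boundary of the fixed subcomplex. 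I would show that a nontrivial elliptic element of $\Gamma$ fixes a subcomplex whose boundary is $\nu_\varepsilon$-null. The argument: if it fixed a positive-measure set, then by quasi-invariance and a density-point argument it would fix pointwise an open set of the boundary, i.e.\ all the boundary points "behind" some wall or chamber. Since root groups act with sufficient thickness ($q\geq 4$), this forces $\gamma$ to fix a half-apartment system, hence to lie in the kernel of the action. Alternatively, one can use the structure of the twin building: $\gamma$ would act trivially on a nonempty open subset of $X_\varepsilon$, and then also trivially on the twin, contradicting faithfulness of $\Gamma$ on $X_\pm$. Once essential freeness is established, the criterion gives charmenability of $\Gamma$, and the (T)-plus-finite-radical remark yields charfiniteness.
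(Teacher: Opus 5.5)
Your overall architecture (property (T) and trivial amenable radical, Howe--Moore factors, Poisson boundaries on $\partial X_\pm$, essential freeness, then the criterion) is the paper's. However, the step you flag as the main obstacle is not carried out, and the sketch you give for it does not work.

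A density-point argument cannot upgrade a positive-measure fixed set to an open one. $\Fix_{\partial X_\varepsilon}(\gamma)$ is only a closed set, and closed fixed-point sets of positive measure with empty interior do occur for general automorphisms (already in the automorphism group of a regular tree). So any such implication must use something specific to $\Gamma$, which is precisely what has to be proved. What the density (martingale) argument actually yields is weaker. If $\xi_+=\lim W_no_+$ for a walk $W_n$ on $\Gamma$, then on $\{\gamma\xi_+=\xi_+\}$ one gets $\nu_+\bigl(\Fix(\pi_+(W_n^{-1}\gamma W_n))\bigr)\to1$. Hence $\pi_+(W_n^{-1}\gamma W_n)\to e$ by the localization lemma (properness on triples of boundary points). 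This only says that $\gamma$ fixes balls $B(W_no_+,R_n)$ with $R_n\to\infty$ at an uncontrolled rate. Their union need not contain any cone, so it gives no open subset of the boundary.

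Even an open fixed set would not by itself put $\gamma$ in the kernel. Elements of $G_+$ close to $e$ fix arbitrarily large regions of $X_+$, and every element of $B_+$ is trivial on an open subset of $X_+$. Rigidity of the twin building requires information in the opposite half, and your sketch never produces it.

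The missing idea is a mechanism giving simultaneous control in $G_-$. In the paper this is the codistance chain $Q_n=\delta^*(W_nc_+,c_-)$. It is a Markov chain on $W$, reversible for $w\mapsto q_w^{-1}$, hence positive recurrent because $W(1/q)<\infty$. At the infinitely many times $n_j$ with $Q_{n_j}=e$, one picks $b_j\in B_+$ with $h_j=b_jW_{n_j}^{-1}\in B_-$. Then $a_j=h_j\gamma h_j^{-1}$ satisfies $\pi_+(a_j)\to e$ and $\pi_-(a_j)\in K_-\pi_-(\gamma)K_-$. Discreteness of $\Gamma$ in $G_+\times G_-$ and injectivity of $\pi_+$ then force $\gamma=e$.

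This argument needs $W_n\in\Gamma$, i.e.\ $\nu_\varepsilon$ must be the hitting measure of a walk on the lattice. On the other hand, Theorem~\ref{thm:PHM} needs $\nu_\varepsilon$ to be the Poisson boundary of an admissible measure on $G_\varepsilon$. Your ``spread-out measure on $G_\varepsilon$ pulled back to the lattice'' does not make sense, since $\Gamma$ is dense in $G_\varepsilon$. The paper reconciles the two requirements by taking $\sigma=\sum_s r_su_{L_s}$ on $\Gamma$ and $\mu_\varepsilon=\sum_s r_sm_{P_s^\varepsilon}$ on $G_\varepsilon$. These induce the same Markov chain on chambers and hence the same hitting measure, which is identified with the Poisson boundary of $(G_\varepsilon,\mu_\varepsilon)$ via \cite{FT22}. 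Without such a coupling, your essential-freeness statement would concern the wrong measure.
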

	
	In fact, we prove that $\Gamma$ is charmenable, but for the parameters above Caprace and Thom show that $\Gamma$ has property (T) \cite[Proposition~4.1]{CT26}, hence is charfinite. 
	Theorem~\ref{thm:KM-main-intro} provides a finitely presented infinite simple charfinite group, giving a positive answer to a question raised in \cite[p.~933]{BBHP22}. 
	
	Consider the conjugation action on the Chabauty space \(\operatorname{Sub}(\Gamma)\). An \emph{invariant random subgroup} (\textup{IRS}) is an invariant Borel probability measure, and a \emph{uniformly recurrent subgroup} (\textup{URS}) is a nonempty closed minimal invariant subset. These notions were introduced in \cite{AGV14} and \cite{GW15} respectively.
	
	We draw out a few consequences of Theorem~\ref{thm:KM-main-intro}. 
	\begin{introcorollary}
		\label{cor:KM-main}
		Under the hypotheses of Theorem~\ref{thm:KM-main-intro}, the only characters of \(\Gamma\) are \(1_\Gamma\) and \(\delta_e\). Every $\Gamma$-\textup{IRS} has the form
		\[
		t\,\delta_{\{e\}}+(1-t)\delta_\Gamma,
		\qquad 0\leq t\leq1,
		\]
		and its only \textup{URSs} are \(\{\{e\}\}\) and \(\{\Gamma\}\).
	\end{introcorollary}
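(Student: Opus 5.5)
We deduce the corollary from Theorem~\ref{thm:KM-main-intro}, using that $\Gamma$ is infinite and simple \cite{CR09} and has property~{\rm (T)} \cite[Proposition~4.1]{CT26}.

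\emph{Characters.} Since $\Gamma$ is simple and infinite, its amenable radical is trivial: it is a normal subgroup, and it cannot be $\Gamma$, because an infinite group with property~{\rm (T)} is not amenable. By charfiniteness, a character $\varphi$ is then either supported on $\{e\}$, so that $\varphi=\delta_e$, or it is finite dimensional. In the second case $\varphi$ comes from a finite dimensional unitary representation $\pi$ of $\Gamma$. The kernel of $\pi$ is normal, hence trivial or all of $\Gamma$. If it is trivial, $\Gamma$ embeds in $U(n)$, so it is residually finite by Mal'cev's theorem since it is finitely generated. An infinite simple group cannot be residually finite, so this is impossible. Hence $\pi$ is trivial and $\varphi=1_\Gamma$.

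\emph{IRSs.} Let $\mu$ be an IRS and set $\varphi(g)=\mu(\{H : g\in H\})$. This is a standard trace: it is the matrix coefficient $\langle \pi(g)\xi,\xi\rangle$ of the quasi-regular representation on $\int^\oplus \ell^2(\Gamma/H)\,d\mu(H)$ at the section $\xi=(\delta_H)_H$, and it is conjugation invariant because $\mu$ is. By Choquet theory $\varphi$ is a barycenter of characters, so $\varphi=t\delta_e+(1-t)1_\Gamma$. Hence $\mu(\{H: g\in H\})=1-t$ for every $g\neq e$. For $g,h\ne e$, applying the same identity to $gh$ and to pairs gives no immediate contradiction, so we use the following argument instead.

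Decompose $\mu$ as $\mu(\{\{e\}\})\delta_{\{e\}}+\mu(\{\Gamma\})\delta_\Gamma+\nu$, where $\nu$ is an invariant measure supported on nontrivial proper subgroups. It suffices to show $\nu=0$. Suppose not, and normalize $\nu$ to an IRS. Its trace is again $s\delta_e+(1-s)1_\Gamma$. If $s=1$, then $\nu$-a.s.\ $H$ contains no $g\ne e$, contradicting $H\ne\{e\}$. If $s<1$, then every $g\ne e$ lies in $H$ with probability $1-s>0$. For a fixed $g\neq e$, the sets $\{H : g\in H\}$ thus all have the same positive measure, and they are sent to one another by conjugation. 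To reach a contradiction, use the standard fact that an ergodic IRS of a group with trivial amenable radical and $\varphi(g)$ constant on nontrivial elements forces $H=\Gamma$ a.s. Concretely, pick $g\ne e$. For $\nu$-a.e.\ $H$ containing $g$, Fubini gives that $H$ contains a set of conjugates of $g$ of positive density, and therefore contains the normal closure of $g$, which is $\Gamma$. Making this density step rigorous is the main obstacle. We expect the paper to use an essential freeness and Howe--Moore argument here, or to show that the trace of an IRS supported on nontrivial proper subgroups cannot be extremal-decomposed as above.

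\emph{URSs.} Let $Y$ be a URS. If $Y\ne\{\{e\}\}$ and $Y\neq\{\Gamma\}$, we use property~{\rm (T)} together with the conclusion that every nontrivial representation weakly contains $\lambda$. The quasi-regular representation $\ell^2(\Gamma/H)$ for $H\in Y$ then weakly contains $\lambda$, and by a result of Glasner--Weiss type this forces $\{e\}\in \overline{\Gamma\cdot H}=Y$. Minimality then gives $Y=\{\{e\}\}$, a contradiction. The remaining case, where $\ell^2(\Gamma/H)$ is the trivial representation, gives $H=\Gamma$.
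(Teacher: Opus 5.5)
Your treatment of characters is the paper's argument (trivial amenable radical, charmenability, charfiniteness, Mal'cev). You should also note that $\delta_e$ is a character because an infinite simple group is icc. The IRS and URS parts, however, each contain a genuine gap.

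\emph{IRSs.} The step you flag is not a technicality. There is no natural notion of density of conjugates in the nonamenable group $\Gamma$. Even granting one, a subgroup containing a positive proportion of the conjugates of $g$ need not contain the normal closure of $g$. So the ``standard fact'' you invoke merely restates what must be proved. The missing idea is to use the $1_\Gamma$-component representation-theoretically.

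Your trace $\varphi_\nu(g)=\nu(\{H : g\in H\})$ is the coefficient at $(\delta_H)_H$ of the direct integral $\int^\oplus\lambda_{\Gamma/H}\,d\nu(H)$. Suppose $\varphi_\nu=s\delta_e+(1-s)1_\Gamma$ with $s<1$. Since $\lambda_\Gamma$ and $1_\Gamma$ are disjoint for infinite $\Gamma$, the GNS representation of $\varphi_\nu$ contains $1_\Gamma$. Hence the direct integral has a nonzero invariant vector $\zeta$. For $\nu$-almost every $H$, $\zeta_H$ is a $\Gamma$-invariant vector of $\ell^2(\Gamma/H)$. It can be nonzero only if $[\Gamma:H]<\infty$, that is $H=\Gamma$, since $\Gamma$ is infinite simple. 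This contradicts the support of $\nu$, and your decomposition then yields $\nu=0$.

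The paper runs the same mechanism differently. It realizes an ergodic IRS as the stabilizer distribution of an ergodic probability-measure-preserving action \cite{AGV14}, and handles the atomic case through finite-index subgroups. In the diffuse case it uses the weak mixing of $\rL^2(\mathcal R)$ \cite{PT16}: the GNS representation of $\chi_\eta$ then has no invariant vectors, which forces $\chi_\eta=\delta_e$.

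\emph{URSs.} The principle you attribute to Glasner--Weiss is false in general; only its converse is easy. (If $g_nHg_n^{-1}\to\{e\}$, the diagonal coefficients $\mathbf 1_{g_nHg_n^{-1}}$ of $\lambda_{\Gamma/H}$ converge to $\delta_e$.) For instance, take $A_4$ and $H=\langle(12)(34)\rangle$. The permutation representation on $A_4/H$ contains every irreducible representation of $A_4$, hence weakly contains $\lambda_{A_4}$. Yet the conjugacy class of $H$ consists of three subgroups of order two, so its closure does not contain $\{e\}$. Your step would therefore need some specific property of $\Gamma$ that your argument never uses.

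The paper instead invokes \cite[Proposition~3.5]{BBHP22}: every URS of a charfinite group with trivial amenable radical is finite. Simplicity then makes the conjugation action on it trivial, so its points are normal subgroups, and minimality leaves only $\{\{e\}\}$ and $\{\Gamma\}$.

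Alternatively, apply property (i) of charmenability to the closed convex hull of $\{\mathbf 1_H : H\in Y\}$. This gives a trace $\int_Y\mathbf 1_H\,d\mu(H)$ for some probability measure $\mu$ on $Y$. If $\{e\}\notin Y$, some finite set $F\subset\Gamma\setminus\{e\}$ meets every $H\in Y$, so this trace differs from $\delta_e$. The invariant-vector argument above, run for $\int^\oplus_Y\lambda_{\Gamma/H}\,d\mu(H)$, then puts $\Gamma$ in $Y$.
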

	
	In particular $\Gamma$ is character-rigid, answering in the positive a question of Caprace and Thom in \cite[Remark~5.4]{CT26}. The result about \textup{IRSs} recovers \cite[Corollary~E]{CT26} through character rigidity and the fixed-point character of a probability-measure-preserving action, following the arguments of Peterson and Thom \cite{PT16}.
	
	It is also worth noticing that $\Gamma$ has a particularly rigid theory of unitary representations. 
	\begin{introcorollary}
		\label{cor:unitary-intro}
		Under the hypotheses of Theorem~\ref{thm:KM-main-intro}, every nontrivial unitary representation $\rho$ weakly contains the left regular representation
		\(\lambda_\Gamma\).
		
		More precisely, if
		\[
		\mathscr H_0=\mathscr H_\rho\ominus\mathscr H_\rho^\Gamma
		\qquad\text{and}\qquad
		\rho_0=\rho|_{\mathscr H_0},
		\]
		then either \(\mathscr H_0=\{0\}\), or \(\rho_0\) is weakly mixing, nonamenable and weakly contains \(\lambda_\Gamma\). In the latter case, the canonical map
		\[
		\Theta_{\rho_0,\lambda}:
		\rC^*_{\rho_0}(\Gamma)\longrightarrow\rC^*_\lambda(\Gamma)
		\]
		is surjective, \(\tau_\Gamma\circ\Theta_{\rho_0,\lambda}\) is the unique
		tracial state on \(\rC^*_{\rho_0}(\Gamma)\), and
		\(\ker(\Theta_{\rho_0,\lambda})\) is its unique proper maximal ideal.
	\end{introcorollary}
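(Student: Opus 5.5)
We deduce the statement from Theorem~\ref{thm:KM-main-intro} together with the structural facts recalled above: $\Gamma$ is infinite, simple, has trivial amenable radical, and has property~{\rm (T)}. The key input is Corollary~\ref{cor:KM-main}: the only characters of $\Gamma$ are $1_\Gamma$ and $\delta_e$.

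First we split off invariant vectors. Given $\rho$, set $\mathscr H_0=\mathscr H_\rho\ominus\mathscr H_\rho^\Gamma$ and assume $\mathscr H_0\neq\{0\}$. Then $\rho_0$ has no nonzero invariant vectors.

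\emph{Weak mixing.} If $\rho_0$ contained a nonzero finite-dimensional subrepresentation, charfiniteness would produce a finite-dimensional representation of $\Gamma$. Since $\Gamma$ is infinite and simple, and every finite-dimensional unitary representation has residually finite (linear) image by Malcev, that representation is trivial. This contradicts the absence of invariant vectors, so $\rho_0$ is weakly mixing.

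\emph{Nonamenability.} By property~{\rm (T)}, $\rho_0$ does not weakly contain $1_\Gamma$, since otherwise it would have invariant vectors. Hence $\rho_0$ is nonamenable: by Bekka's characterization, an amenable representation $\pi$ satisfies $1_\Gamma\prec\pi\otimes\bar\pi$.

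\emph{Traces on $\rC^*_{\rho_0}(\Gamma)$.} Here we use charmenability through the C$^*$-algebraic formulation from Section~\ref{sec:charmenability}. The set of states on $\rC^*_{\rho_0}(\Gamma)$, viewed as positive definite functions, is a nonempty weak-$\ast$ compact convex $\Gamma$-invariant subset of $\mathscr P(\Gamma)$. It therefore contains a trace $\tau$.

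Decompose $\tau$ as an integral of characters, using the Choquet decomposition on the simplex $\Tr(\Gamma)$. The only characters are $1_\Gamma$ and $\delta_e$, so
\[
\tau=t\,1_\Gamma+(1-t)\,\delta_e,\qquad 0\leq t\leq 1.
\]
Suppose $t>0$. The GNS representation of $\tau$ is weakly contained in $\rho_0$, and it contains $1_\Gamma$ as a subrepresentation. Thus $1_\Gamma\prec\rho_0$, contradicting property~{\rm (T)}. Hence $t=0$ and $\tau=\delta_e=\tau_\Gamma$ on $\Gamma$.

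\emph{Weak containment of $\lambda_\Gamma$.} We now know $\tau_\Gamma$ extends to a state on $\rC^*_{\rho_0}(\Gamma)$. The GNS representation of $\delta_e$ is $\lambda_\Gamma$, so $\lambda_\Gamma\prec\rho_0$. This gives the surjection $\Theta_{\rho_0,\lambda}$.

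\emph{Uniqueness of the trace.} Every tracial state on $\rC^*_{\rho_0}(\Gamma)$ restricts to a trace on $\Gamma$ lying in this compact convex set. By the argument above it equals $\delta_e$. It is therefore $\tau_\Gamma\circ\Theta_{\rho_0,\lambda}$, since both agree on $\Gamma$.

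\emph{Unique maximal ideal.} Let $I$ be a proper maximal ideal. The quotient $\rC^*_{\rho_0}(\Gamma)/I$ is simple and unital. Its representation of $\Gamma$ is again weakly contained in $\rho_0$, so it has no invariant vectors by {\rm (T)}, and charmenability gives it a trace. Pulling this trace back gives the unique trace, which vanishes on $I$. Its kernel $\{x:\tau(x^*x)=0\}$ is exactly $\ker\Theta_{\rho_0,\lambda}$, because the reduced trace is faithful on $\rC^*_\lambda(\Gamma)$. Hence $I\subseteq\ker\Theta$, and maximality forces equality. Since $\ker\Theta$ is itself proper, it is the unique maximal ideal.

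For the first assertion of the statement, any nontrivial $\rho$ has $\mathscr H_0\neq\{0\}$ by simplicity. Then $\lambda_\Gamma\prec\rho_0\leq\rho$.

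The main obstacle is the invocation of charmenability on the state space of $\rC^*_{\rho_0}(\Gamma)$. We must check that this set is exactly of the form required in the definition, and that the Choquet integral argument passes correctly to GNS weak containment. Both should follow from the standard framework of \cite{BBHP22} recalled in Section~\ref{sec:charmenability}.
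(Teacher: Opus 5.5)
There is one genuine gap, in your nonamenability step. Amenability of $\rho_0$ means $1_\Gamma\prec\rho_0\otimes\bar\rho_0$. Knowing $1_\Gamma\not\prec\rho_0$ does not rule this out, and in general the implication is false: a nontrivial finite-dimensional irreducible representation $\pi$ of a property~{\rm (T)} group does not weakly contain $1_\Gamma$, yet $\pi\otimes\bar\pi$ contains $1_\Gamma$.

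The fix must use the weak mixing you proved just before. By property~{\rm (T)}, $1_\Gamma\prec\rho_0\otimes\bar\rho_0$ would give a nonzero invariant vector of $\rho_0\otimes\bar\rho_0$. This is a nonzero Hilbert--Schmidt operator $T$ on $\mathscr H_0$ commuting with $\rho_0(\Gamma)$. An eigenspace of the compact operator $T^*T$ for a nonzero eigenvalue is then a nonzero finite-dimensional subrepresentation of $\rho_0$, contradicting weak mixing. This is the content of \cite[Theorem~1]{BV93}, which is what the paper invokes at this point.

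The rest of your argument is correct and takes a different route from the paper.
\begin{itemize}
\item \emph{The paper's route.} After weak mixing and nonamenability, it applies \cite[Proposition~3.7]{BBHP22}. In that general result, nonamenability of $\rho_0$ is what excludes amenable characters from the trace produced by charmenability.
\item \emph{Your route.} You exclude the only amenable character, $1_\Gamma$, directly, using Corollary~\ref{cor:KM-main} and property~{\rm (T)}. The GNS representation of $t\,1_\Gamma+(1-t)\delta_e$ with $t>0$ does contain $1_\Gamma$, because $1_\Gamma$ and $\lambda_\Gamma$ are disjoint for infinite $\Gamma$.
\item \emph{What each buys.} Your route makes weak containment of $\lambda_\Gamma$, uniqueness of the trace and the maximal-ideal statement independent of nonamenability, and it is self-contained. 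The price is reliance on the explicit classification of characters, whereas the paper's citation works for any charmenable group with trivial amenable radical.
\end{itemize}

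Minor points:
\begin{itemize}
\item In the weak-mixing step, charfiniteness plays no role: the finite-dimensional subrepresentation is itself the finite-dimensional representation. Mal'cev's theorem also needs $\Gamma$ to be finitely generated.
\item For nontrivial $\rho$, $\mathscr H_0\neq\{0\}$ holds by definition, not by simplicity.
\end{itemize}
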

	
	These assertions follow from \cite[Proposition~3.7]{BBHP22}, property~{\rm (T)} and the absence of nontrivial finite-dimensional unitary representations of \(\Gamma\). 
	
	\subsection{Poisson--Howe--Moore singularity and a charmenability criterion}
	
	Let \(G\) be a locally compact group, let \((B,\nu)\) be a nonsingular \(G\)-space and let \(\alpha:G\curvearrowright M\) be an action on a von Neumann algebra. A \emph{boundary structure} over \(B\) is a faithful normal \(G\)-equivariant ucp map
	\[
	\rE:M\longrightarrow\rL^\infty(B,\nu).
	\]
	It is \emph{invariant} when its range is contained in \(\C1\). For \(s\in G\), put \(\rE_s=\rE\circ\alpha_s\). We write \(\rE\perp\rE_s\) when the associated measurable state fields are at norm distance \(2\) almost everywhere.
	
	Let \(\mu\in\Prob(G)\). We say that \(s\in G\) satisfies \emph{\(\mu\)-conjugacy escape} when
	\begin{equation}
		\mu^{*n}\bigl(\{h\in G\mid h^{-1}sh\in K\}\bigr)\longrightarrow0
		\qquad\text{for every compact }K\subset G.
		\tag{CE\(_s\)}
		\label{eq:CE-intro}
	\end{equation}
	
	The main analytic input of this paper is an invariant-or-singular dichotomy over arbitrary Poisson boundaries. 
	
	\begin{introresult}
		\label{thm:PHM-intro}
		Let \(G\) be a second countable locally compact group with the Howe--Moore
		property, let \(\mu\in\Prob(G)\) be generating and let
		\((B,\nu)=\Pois(G,\mu)\). Let \(M\) be a von Neumann algebra with separable
		predual endowed with an ergodic continuous action
		\(\alpha:G\curvearrowright M\). If
		\[
		\rE:M\longrightarrow\rL^\infty(B,\nu)
		\]
		is a faithful normal \(G\)-equivariant ucp map and \(s\in G\) satisfies
		\eqref{eq:CE-intro}, then either \(\rE\) is invariant or
		\(\rE\perp\rE_s\).
	\end{introresult}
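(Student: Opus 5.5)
We plan to run the standard ``invariant or singular'' argument of \cite{BBHP22,BBH23}, but driving the measure-theoretic part with the random walk instead of an amenable parabolic subgroup. Write \(\rE(x)(b)=\phi_b(x)\), where \(b\mapsto\phi_b\in M_*\) is the measurable field of normal states determined by \(\rE\); equivariance means \(\phi_{gb}=\phi_b\circ\alpha_{g^{-1}}\). For \(x\in M\), the function \(\rE(x)\in\rL^\infty(B,\nu)\) is the boundary value of the bounded \(\mu\)-harmonic function \(g\mapsto\int_B \rE(\alpha_{g}^{-1}(x))\,\dd\nu\). Let \(\phi=\int_B\phi_b\,\dd\nu(b)\). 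This is a \(\mu\)-stationary normal state on \(M\), and it is faithful because \(\rE\) is faithful.

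Our plan is to consider the function
\[
f(b)=\tfrac12\|\phi_b-\phi_{b}\circ\alpha_s\|\in[0,1],
\]
and to show that if \(f<1\) on a set of positive measure, then \(\rE\) is invariant.

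\textbf{Step 1 (zero--one law).} Use the martingale convergence along the random walk \(\omega=(g_n)\), with \(g_n b_0\to\) the boundary point \(\mathrm{bnd}(\omega)\) in the Furstenberg sense. Concretely, \(\phi_{\mathrm{bnd}(\omega)}=\lim_n \phi\circ\alpha_{g_n}^{-1}\) weakly (Furstenberg's martingale argument applied to the \(\mu\)-stationary state \(\phi\)). Then
\[
\phi_{\mathrm{bnd}(\omega)}\circ\alpha_s=\lim_n\phi\circ\alpha_{g_n^{-1}}\alpha_s=\lim_n\bigl(\phi\circ\alpha_{h_n}\bigr)\circ\alpha_{g_n^{-1}},
\qquad h_n=g_n^{-1}sg_n .
\]

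\textbf{Step 2 (Howe--Moore).} By \eqref{eq:CE-intro}, \(h_n\to\infty\) in probability. By the Howe--Moore property applied to the ergodic action \(\alpha\), realised unitarily on the standard form \(\rL^2(M)\), the matrix coefficients \(\langle \alpha_{h}(x)\xi_\phi,\xi_\phi\rangle\) tend to \(\phi(x)\phi(1)\)-type products. More precisely, on the complement of the invariant vectors, which by ergodicity is spanned by the vectors orthogonal to \(\C\xi_\phi\) once we know \(\phi\) is invariant, they tend to \(0\). This gives asymptotic orthogonality: for \(x\) in \(M\) with \(\phi(x)=0\), we get \(\phi(\alpha_{h_n}(x)y)\to0\).

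The key estimate we aim for is a fidelity bound. Using the Powers--Størmer / Bures fidelity \(F(\psi_1,\psi_2)\), and the fact that \(F\) is preserved by automorphisms and weak-\(\ast\) upper semicontinuous, we get
\[
F\bigl(\phi_{\mathrm{bnd}},\phi_{\mathrm{bnd}}\circ\alpha_s\bigr)\le\liminf_n F\bigl(\phi,\phi\circ\alpha_{h_n}\bigr).
\]
Howe--Moore on \(\rL^2(M)\) then shows that \(\limsup F(\phi,\phi\circ\alpha_h)\) as \(h\to\infty\) is bounded by a quantity that is strictly less than \(1\) unless \(\phi\) is \(G\)-invariant. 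Combining with the zero--one law (the set \(\{f=1\}\) is \(G\)-invariant, hence null or conull since \(\nu\) is ergodic), we get the following dichotomy: either \(f=1\) a.e., which is exactly \(\rE\perp\rE_s\), or \(\phi\) is \(G\)-invariant.

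\textbf{Step 3 (invariant case).} If \(\phi\) is invariant, then stationarity together with the martingale limit forces \(\phi_b=\phi\) a.e. We argue via the variance identity
\[
\sum_n\int\|\phi\circ\alpha_{g_{n+1}}^{-1}-\phi\circ\alpha_{g_n}^{-1}\|^2=0,
\]
so the martingale is constant. Hence \(\rE\) has range in \(\C1\), i.e.\ \(\rE\) is invariant.

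\textbf{Main obstacle.} The main obstacle is Step 2: converting Howe--Moore decay of \(\rL^2\)-coefficients, which holds only along sequences \(h\to\infty\) and only on the orthocomplement of the invariant vectors, into an upper bound on the fidelity \(F(\phi,\phi\circ\alpha_h)\) that is strictly less than \(1\) whenever \(\phi\) is not invariant. The difficulty is that \(\phi\) need not be invariant a priori. Our first attempt would be to decompose \(\xi_\phi=\xi_0+\xi_1\), with \(\xi_0\) in the invariant vectors, which by ergodicity consist of the multiples of a fixed invariant vector, if any. Then \(\langle J\alpha_h\xi_\phi,\xi_\phi\rangle\to\|\xi_0\|^2\), which gives \(F\le\|\xi_0\|^2<1\) unless \(\xi_\phi=\xi_0\). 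Because \(h_n\to\infty\) only in probability, we then pass to subsequences. Here the Howe--Moore property must be applied to the Koopman representation on \(\rL^2(M)\), which is continuous because \(\alpha\) is continuous.
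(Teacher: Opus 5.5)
Your Steps 1 and 3 are fine. Step 2, which carries the whole argument, does not work as written.

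\textbf{The key inequality has no valid justification.} You cannot get $F(\phi_{\mathrm{bnd}},\phi_{\mathrm{bnd}}\circ\alpha_s)\le\liminf_n F(\phi,\phi\circ\alpha_{h_n})$ from weak-$*$ upper semicontinuity of fidelity. Upper semicontinuity gives $F(\lim)\ge\limsup F$, the opposite direction. Lower semicontinuity is false: take Dirac masses at $1/n$ and $-1/n$ on $C([-1,1])$.

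What you need is to pass non-singularity from the boundary field to the \emph{averaged} state, and this comes from convexity under the disintegration. We have $\phi\circ\alpha_{g_n^{-1}}=\int\phi_{g_nb}\,\dd\nu(b)$ and $\phi\circ\alpha_{g_n^{-1}s}=\int\phi_{g_nb}\circ\alpha_s\,\dd\nu(b)$. The triangle inequality then gives $\tfrac12\|\phi-\phi\circ\alpha_{h_n}\|\le\int f(g_nb)\,\dd\nu(b)$. Martingale convergence for the bounded function $f$ sends the right side to $f(\mathrm{bnd}(\omega))$ almost surely.

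The paper does the same without sample paths. Stationarity, $\int\nu(h^{-1}D)\,\dd\mu^{*n}(h)=\nu(D)$, together with conjugacy escape, gives $h$ with $\nu(h^{-1}D)\ge p/2$ and $k=h^{-1}sh$ outside a prescribed compact set. It then bounds $\|\varphi-\varphi\circ\alpha_k\|\le\int d_s(hb)\,\dd\nu(b)\le2-\delta p/2$.

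\textbf{The Howe--Moore step aims at the wrong threshold, and the zero--one law is false.} Singularity means $f=1$ a.e., that is, fidelity $0$. A bound $F<1$ only gives $f>0$. Nor is $\{f=1\}$ $G$-invariant: $f(gb)=\tfrac12\|\phi_b-\phi_b\circ\alpha_{g^{-1}sg}\|$, so this set is invariant only under the centralizer of $s$, and ergodicity of $\nu$ says nothing about it.

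The missing idea is that any nonzero invariant component already forces invariance. Let $P$ be the projection onto $\rL^2(M)^G$. If $P\xi_\phi\neq0$, then $x\mapsto\langle xP\xi_\phi,P\xi_\phi\rangle$ is a nonzero normal $G$-invariant positive functional. After normalization it is a normal $\mu$-stationary state, so by ergodicity and uniqueness of the normal stationary state it equals $\phi$.

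The correct dichotomy is therefore:
\begin{itemize}
\item either $\phi$ is invariant, and then $\rE$ is invariant by your Step 3 or the Poisson correspondence;
\item or $P\xi_\phi=0$. Then Howe--Moore gives $\langle\xi_\phi,U_h^*\xi_\phi\rangle\to0$ as $h\to\infty$, and Powers--St\o rmer gives $\|\phi-\phi\circ\alpha_h\|\ge\|\xi_\phi-U_h^*\xi_\phi\|^2\to2$.
\end{itemize}

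To close the second case, argue by contradiction. Conjugacy escape and Borel--Cantelli give a subsequence with $h_{n_j}\to\infty$ almost surely. If $f\le1-\delta$ on a set of positive measure, the first repair gives $\limsup_j\|\phi-\phi\circ\alpha_{h_{n_j}}\|\le2-2\delta$ on the positive-probability event $\{f(\mathrm{bnd})\le1-\delta\}$. This contradicts the limit $2$. With these two repairs your pathwise argument becomes a valid variant of the paper's proof.
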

	
	This theorem replaces the homogeneous-boundary argument of \cite[Proposition~4.14]{BBHP22}, which was the main analytical proof for charmenability of higher rank lattices. There the Poisson boundary is a homogeneous space \(G/P\). In this case, transitivity of the action reduces an equivariant state field to the orbit of one extremal \(P\)-invariant state, while the relative Mautner property forces that state and its translates to be equal or singular. No such reduction is available for the visual boundaries considered here, as the boundaries \(\partial X_\pm\) are not $G_\pm$-transitive. 
	
	Instead, we use stationarity to turn a failure of singularity into matrix coefficients that remain bounded away from zero along escaping conjugates. The Powers--St\o rmer inequality and the Howe--Moore property then produce a nonzero invariant normal functional. By ergodicity and the Poisson-boundary correspondence, \(\rE\) is then forced to be invariant. The factor-extraction theorem \cite[Theorem~5.8]{BBHP22} and the charmenability criterion \cite[Proposition~4.16]{BBHP22} require no transitivity and remain essential tools here.
	
	Recall that a probability measure on a locally compact group is called \emph{admissible} when its support generates the group as a closed semigroup and some convolution power is non-singular with the Haar measure. We can now state the criterion that we use in order to show that some Kac--Moody lattices are charfinite. 
	
	\begin{introresult}
		\label{thm:twin-main}
		Let \(G_1\) and \(G_2\) be second countable locally compact groups with the Howe--Moore property. Let \(\Gamma<G_1\times G_2\) be a lattice with dense injective coordinate projections \(\pi_i:\Gamma\to G_i\) and trivial amenable radical. Suppose that, for each \(i\in\{1,2\}\), there is an admissible probability measure \(\mu_i\) on \(G_i\) such that \(\pi_i(\gamma)\) satisfies \(\mu_i\)-conjugacy escape for every \(\gamma\in\Gamma\setminus\{e\}\). Then \(\Gamma\) is charmenable. If \(\Gamma\) has property~{\rm (T)}, then it is charfinite. 
	\end{introresult}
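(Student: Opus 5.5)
The plan is to verify the hypotheses of the charmenability criterion \cite[Proposition~4.16]{BBHP22}, with the factor-extraction theorem \cite[Theorem~5.8]{BBHP22} as the mechanism and Theorem~\ref{thm:PHM-intro} replacing the homogeneous-boundary input \cite[Proposition~4.14]{BBHP22}. First we set up the boundary. For $i=1,2$ put $(B_i,\nu_i)=\Pois(G_i,\mu_i)$. Admissibility of $\mu_i$ makes it generating, which is what Theorem~\ref{thm:PHM-intro} needs. We take $B=B_1\times B_2$ with $\nu=\nu_1\otimes\nu_2$. Since $\mu_1\otimes\mu_2$ is admissible on $G=G_1\times G_2$ and $\Pois(G,\mu_1\otimes\mu_2)=(B,\nu)$, restricting to the lattice $\Gamma$ gives a $\Gamma$-boundary in the sense of Furstenberg (amenable and strongly ergodic relative to $\Gamma$). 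This lets us run the first half of the criterion. Every nonempty compact convex $\Gamma$-invariant subset of $\mathscr P(\Gamma)$ yields a $\Gamma$-equivariant ucp map from the relevant von Neumann algebra into $\rL^\infty(B,\nu)$, and we must show these maps factor through invariant ones.

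The core step is an invariant-or-singular dichotomy for $\Gamma$-boundary structures. Let $\rE:M\to\rL^\infty(B,\nu)$ be a $\Gamma$-equivariant boundary structure. We induce it to $G$: form $\operatorname{Ind}_\Gamma^G M=\rL^\infty(G/\Gamma)\bar\otimes M$ with its induced structure over $B$. Then, exactly as in the factor-extraction procedure of \cite[Theorem~5.8]{BBHP22}, we pass to the intermediate factor over $B_1$ by taking conditional expectation onto the $G_2$-fixed part. Ergodicity of the induced action of $G_1$ follows from the density of $\pi_2(\Gamma)$ and Howe--Moore/Moore ergodicity. Theorem~\ref{thm:PHM-intro} then applies to $G_1$ with $s=\pi_1(\gamma)$, and symmetrically to $G_2$. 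The output, for every $\gamma\neq e$, is that the structure is either invariant in the $i$-th coordinate or satisfies $\rE\perp\rE_{\gamma}$ on that coordinate. Conjugacy escape holds for \emph{every} nontrivial $\gamma$, so we get the required dichotomy: either $\rE$ is $G_i$-invariant, and hence descends to the other factor, or every nontrivial element moves $\rE$ to a singular structure.

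We then feed this into \cite[Proposition~4.16]{BBHP22}. Trivial amenable radical, injective dense projections, and the dichotomy for all $\gamma\neq e$ give the conclusion: every invariant compact convex set contains a trace, and every character is either $\delta_e$ (the singular case) or amenable (the invariant case, via amenability of the boundary action). That is charmenability. For the last sentence we use \cite[p.~299]{BBH23}: a charmenable group with trivial, hence finite, amenable radical and property~(T) is charfinite.

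The main obstacle is the induction/restriction step. We must check that the $\Gamma$-equivariant structure produced by \cite[Theorem~5.8]{BBHP22} genuinely becomes a $G_i$-equivariant faithful normal ucp map into $\rL^\infty(B_i,\nu_i)$ on an ergodic $G_i$-von Neumann algebra with separable predual. We also need the singularity $\rE\perp\rE_{\pi_i(\gamma)}$ for $G_i$ to transfer back to singularity of the corresponding $\Gamma$-states. I would try first to mimic the higher-rank argument of \cite{BBHP22} verbatim, replacing $G/P$ by $B_i$ and checking that transitivity of $G_i$ on $B_i$ was never used except in \cite[Proposition~4.14]{BBHP22}.
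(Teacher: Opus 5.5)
Your architecture is the paper's: Poisson boundaries $(B_i,\nu_i)$, the product $\Gamma$-space $B_1\times B_2$, factor extraction \cite[Theorem~5.8]{BBHP22}, Theorem~\ref{thm:PHM} with $s=\pi_i(\gamma)$, then \cite[Proposition~4.16]{BBHP22} and \cite[p.~299]{BBH23}. Your instinct that transitivity entered only through \cite[Proposition~4.14]{BBHP22} is also what the paper relies on.

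What is missing is the reduction step, which is the whole content of Theorem~\ref{thm:product-reduction}. Your setup of it is what makes it look hard. Factor extraction does not hand you a structure on a $G_2$-fixed part of an induced algebra, and it is not clear that your construction lands in $\rL^\infty(B_1)$ at all. It gives von Neumann subalgebras $M_i\subset M$ of $G_i$-continuous elements, with extended $G_i$-actions and $\rE(M_i)\subset\rL^\infty(B_i)$. It also gives the statement you never formulate: if $\rE$ is faithful and not invariant, then $\rE_i=\rE|_{M_i}$ is not $G_i$-invariant for some $i$. That statement, not ``invariant in the $i$-th coordinate, hence descends to the other factor'', turns the factorwise dichotomy into the dichotomy required by Proposition~\ref{prop:charm-criterion}.

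Working inside $M$ also settles the hypothesis checks you list as obstacles. Faithfulness and separability of the predual pass to the subalgebra $M_i$. Ergodicity needs neither Howe--Moore nor Moore ergodicity: the extended $G_i$-action restricts along $\pi_i$ to the $\Gamma$-action, so $M_i^{G_i}\subset M^\Gamma=\C1$.

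The transfer of singularity back to $\Gamma$, which you leave unresolved, is then short.
\begin{enumerate}
\item For distinct $\gamma,\eta\in\Gamma$, Theorem~\ref{thm:PHM} with $s=\pi_i(\eta\gamma^{-1})$ gives $\rE_i\perp(\rE_i)_{\pi_i(\eta\gamma^{-1})}$.
\item Common precomposition with $\alpha_{\pi_i(\gamma)}$ gives $(\rE_i)_{\pi_i(\gamma)}\perp(\rE_i)_{\pi_i(\eta)}$. This is the pairwise singularity that $\Gamma$-singularity means.
\item With $p_i\colon B\to B_i$ the coordinate projection, $\rE_\kappa|_{M_i}=p_i^*\circ(\rE_i)_{\pi_i(\kappa)}$ for $\kappa\in\{\gamma,\eta\}$. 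Since $(p_i)_*\nu=\nu_i$, the state-field characterization \cite[Proposition~4.10]{BBHP22} gives $\rE_\gamma|_{M_i}\perp\rE_\eta|_{M_i}$.
\item Singularity on the subalgebra $M_i$ implies singularity on $M$ \cite[Remark~4.12]{BBHP22}.
\end{enumerate}

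Finally, the boundary hypothesis of Proposition~\ref{prop:charm-criterion} is amenability and metric ergodicity of $\Gamma\curvearrowright(B,\nu)$, not strong ergodicity. The paper obtains this from \cite{BF14} for each $B_i$ and from \cite[Lemmas~3.5 and~3.6]{BF20} for products and restriction to lattices. So you need neither to realize $(B,\nu)$ as a Poisson boundary of $G_1\times G_2$ nor to check admissibility of $\mu_1\otimes\mu_2$.
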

	
	The proof of Theorem~\ref{thm:twin-main} is straightforward: Theorem~\ref{thm:PHM-intro} gives the invariant-or-singular dichotomy in each factor. Factor extraction passes it to the lattice, and \cite[Proposition~4.16]{BBHP22} gives charmenability. Property~{\rm (T)} then gives charfiniteness.

	Our goal is then to prove that $\Gamma$ satisfies conjugacy escape on both factors. This property follows from essential freeness of the corresponding harmonic boundary actions. Indeed, suppose that \(G\) acts on a proper geodesic Gromov-hyperbolic space \(X\) and that a \(\mu\)-random walk satisfies \(H_n o\to\xi\in\partial X\) almost surely, where \(\xi\) has law \(\nu\). If \(H_n^{-1}sH_n\) remains in a fixed compact set along a subsequence, then \(H_n o\) and \(sH_n o\) remain at bounded distance along that subsequence. Hence \(s\xi=\xi\). Lemma~\ref{lem:geometric-CE} makes this argument precise and gives
	\begin{equation}
		\nu\bigl(\Fix_{\partial X}(s)\bigr)=0
		\quad\Longrightarrow\quad
		s\text{ satisfies \(\mu\)-conjugacy escape}.
		\label{eq:fixed-points-imply-escape-intro}
	\end{equation}

	Our approach to proving Theorem~\ref{thm:KM-main-intro} is therefore to construct a random walk $(Z_n)$ on $\Gamma$ in such a way that for $\varepsilon \in \{+, -\}$, there exists an admissible measure $\mu_\varepsilon$ on $G_\varepsilon$ for which: 
	\begin{itemize}
		\item the Poisson boundary of $(G_\varepsilon, \mu_\varepsilon) $ is realized as $(\partial X_\varepsilon, \nu_\varepsilon)$, 
		\item $(\pi_\varepsilon(Z_n))$ and the $\mu_\varepsilon $-random walk on $G_\varepsilon$ induce the same Markov chain on the chambers $\Ch(X_\varepsilon)$, so that $\nu_\varepsilon$ is also the hitting measure of $(\pi_\varepsilon(Z_n))$, 
		\item the $\Gamma$-action on $(\partial X_\varepsilon, \nu_\varepsilon)$ is essentially free. 
	\end{itemize}
	
	We give more details on this construction in the next subsection. 
	\subsection{The random walk on the twin-building lattice}
	
	Write the two ambient factors as \(G_+\) and \(G_-\), with projections
	\(\pi_\varepsilon:\Gamma\to G_\varepsilon\) for
	\(\varepsilon\in\{+,-\}\). In our case, these projections are injective and dense, and \(G_\varepsilon\) acts geometrically and nonelementarily on a proper geodesic Gromov-hyperbolic space \(X_\varepsilon\). For \(s\in S\), let
	\[
	L_s=\langle U_{\alpha_s},U_{-\alpha_s}\rangle\leq\Gamma
	\]
	be the finite rank-one subgroup associated to the root $\alpha_s$, and let \(u_{L_s}\) be its uniform probability measure. Choose \(r_s>0\), with \(\sum_{s\in S}r_s=1\), and set
	\[
	\sigma=\sum_{s\in S}r_su_{L_s}.
	\]
	If \(P_s^\varepsilon<G_\varepsilon\) denotes the stabilizer of the fundamental \(s\)-panel and \(m_{P_s^\varepsilon}\) its normalized Haar measure, define
	\[
	\mu_\varepsilon
	=\sum_{s\in S}r_sm_{P_s^\varepsilon}
	\qquad
	(\varepsilon\in\{+,-\}).
	\]
	Lemma~\ref{lem:ef-same-chamber-walk} below shows that the \(\sigma\)-walk on \(\Gamma\) and the \(\mu_\varepsilon\)-walk on \(G_\varepsilon\) induce the same Markov chain on
	\(\operatorname{Ch}(X_\varepsilon)\). Moreover, $(\partial X_\varepsilon,\nu_\varepsilon)$ is the Poisson--Furstenberg boundary of $(G_\varepsilon,\mu_\varepsilon)$. In Section~\ref{sec:essential-freeness}, we use the twinning between $X_\varepsilon$ and $X_{-\varepsilon}$ to prove that the induced $\Gamma$-action is essentially free. 
	
	Recall that a probability measure \(\eta\) on a countable group is \emph{\(\rC^*\)-simple} when the canonical trace is the unique \(\eta\)-stationary state on the reduced group \(\rC^*\)-algebra for the conjugation action. As a side result, we observe that $\sigma$ is a concrete \(\rC^*\)-simple measure on $\Gamma $. 
	
	\begin{introcorollary}
		\label{cor:cstar-measure-intro}
		Under the hypotheses of Theorem~\ref{thm:KM-main-intro}, the measure $\sigma$ defined above is symmetric, finitely supported, generating and \(\rC^*\)-simple. 
	\end{introcorollary}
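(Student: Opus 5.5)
The first three properties follow directly from the construction, and the real content is $\rC^*$-simplicity, which I will reduce to essential freeness of the boundary action.

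\emph{Symmetric and finitely supported.} Each $L_s$ is a finite subgroup, so its uniform measure $u_{L_s}$ is symmetric and finitely supported. The same therefore holds for the convex combination $\sigma$.

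\emph{Generating.} The support of $\sigma$ is $\bigcup_s L_s$. The root groups $U_{\pm\alpha_s}$ for $s\in S$ generate $\Lambda$: the minimal Kac--Moody group is generated by its real root groups, and every real root is a $W$-translate of a simple root, with $W$ realized inside $\langle L_s\mid s\in S\rangle$. Hence the $L_s$ generate $\Gamma$. Since $\sigma$ is symmetric, generation as a group and as a semigroup coincide.

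\emph{$\rC^*$-simplicity.} I would use the criterion of Hartman--Kalantar, in the form used in \cite{BBHP22}: a generating measure $\eta$ on a countable group is $\rC^*$-simple if and only if the $\Gamma$-action on the Poisson boundary $\Pois(\Gamma,\eta)$ is essentially free. The work is to identify a boundary of $(\Gamma,\sigma)$ on which essential freeness is known. Fix $\varepsilon=+$. By Lemma~\ref{lem:ef-same-chamber-walk}, the $\sigma$-walk projected to $G_+$ and the $\mu_+$-walk induce the same Markov chain on $\Ch(X_+)$. Hence $\pi_+(Z_n)o$ converges almost surely to a point of $\partial X_+$ with law $\nu_+$, and $(\partial X_+,\nu_+)$ is a $\sigma$-boundary, that is, a $\Gamma$-equivariant measurable quotient of $\Pois(\Gamma,\sigma)$. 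The results of Section~\ref{sec:essential-freeness} show that the $\Gamma$-action on $(\partial X_+,\nu_+)$ is essentially free. Essential freeness lifts from a factor to an extension: if $\gamma x=x$ on a positive-measure set of $\Pois(\Gamma,\sigma)$, then $\gamma$ fixes the images of those points, which form a set of positive $\nu_+$-measure. So the action on $\Pois(\Gamma,\sigma)$ is essentially free, and the criterion gives that $\sigma$ is $\rC^*$-simple.

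\emph{Alternative route.} One could instead argue directly from charfiniteness (Theorem~\ref{thm:KM-main-intro}). A $\sigma$-stationary state on $\rC^*_\lambda(\Gamma)$ restricts to a positive definite function whose orbit closure contains a trace. The trace is then $\delta_e$, since $1_\Gamma$ does not extend to $\rC^*_\lambda$ for nonamenable $\Gamma$. However, passing from "some limit is $\delta_e$" to "the state is $\tau$" is essentially the same boundary argument, so I would use the Hartman--Kalantar criterion.

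\emph{Main obstacle.} The delicate point is justifying that $(\partial X_+,\nu_+)$ is a genuine $\sigma$-boundary of $\Gamma$. This needs almost sure convergence of $\pi_+(Z_n)o$ and the identification of its limit law with $\nu_+$, which must hold despite $\Gamma$ acting on $X_+$ non-properly. I expect this to follow from the shared chamber Markov chain, since convergence of the orbit depends only on the chamber sequence, so the convergence and hitting-measure statements for the $\mu_+$-walk transfer verbatim to the $\sigma$-walk.
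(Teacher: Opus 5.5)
Your arguments for symmetry, finite support and generation are fine. The $\rC^*$-simplicity step, however, rests on a criterion that is false in the direction you use: essential freeness of $\Gamma\curvearrowright\Pois(\Gamma,\eta)$ does \emph{not} imply that $\eta$ is $\rC^*$-simple. Take the lamplighter group $\mathbb Z/2\wr\mathbb Z^3$ with the switch--walk--switch measure. Its limit lamp configuration is an $\eta$-boundary on which the action is essentially free: given the range of the transient base walk, the final lamps on it are i.i.d.\ fair coins, so no nontrivial element fixes the configuration with positive probability. Your own lifting argument then makes the Poisson boundary action essentially free. But the group is amenable, so $1_\Gamma$ extends to a conjugation-invariant, hence stationary, state on $\rC^*_\lambda(\Gamma)$ that differs from the canonical trace. (Passing to $\mathbb F_2\times(\mathbb Z/2\wr\mathbb Z^3)$ shows that nonamenability alone does not rescue the statement.) Only the converse holds in general. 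Poisson-boundary stabilizers are amenable, so $\int_B\mathbf 1_{\Gamma_b}\,\dd\nu(b)$ is a stationary state on $\rC^*_\lambda(\Gamma)$, and it must be the trace if $\eta$ is $\rC^*$-simple.

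The missing ingredient is unique stationarity on a compact model, and that is exactly what you discard when you pass from $(\partial X_+,\nu_+)$ up to $\Pois(\Gamma,\sigma)$. The paper applies \cite[Theorem~4.12]{HK23} to $(\partial X_+,\nu_+)$ itself, using three facts. First, $\nu_+$ is the \emph{unique} $\sigma$-stationary probability measure on the compact space $\partial X_+$ (Proposition~\ref{prop:ef-common-hitting}$(\rm ii)$, via \cite{MT18}). Second, $W_n\nu_+\to\delta_{\xi_+}$ almost surely \cite[Proposition~4.16]{MT18}. Third, the action is essentially free by Theorem~\ref{thm:ef-panel-essential-free}. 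Uniqueness forces any stationary extension of a stationary state on $\rC^*_\lambda(\Gamma)$ to $\rC(\partial X_+)\rtimes_r\Gamma$ to restrict to $\nu_+$ on $\rC(\partial X_+)$. The boundary property then makes the limit states restrict to point masses $\delta_\xi$, and a multiplicative-domain argument kills $\lambda_\gamma$ whenever $\gamma\xi\neq\xi$. In the lamplighter example, the compact space of configurations carries an invariant Bernoulli measure, and this is precisely where the argument breaks. Finally, the point you flag as the main obstacle is not delicate. \cite{MT18} applies directly to the nonelementary action of $\Gamma$ on $X_+$, with no properness needed, and this is how Proposition~\ref{prop:ef-common-hitting} obtains convergence and identifies the hitting measure.
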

	
	Indeed, \((\partial X_+,\nu_+)\) is a uniquely \(\sigma\)-stationary \(\sigma\)-boundary, and its action is essentially free, hence the criterion of Hartman and Kalantar applies \cite[Theorem~4.12]{HK23}. In fact, we recover one of the results of Rybak \cite{Ryb26} that \(\Gamma\) is completely selfless, see Corollary~\ref{cor:KM-selfless}. 
	
	\subsection{Outline of the strategy and organization of the paper}
	The proof may be summarized as follows. Harmonic essential freeness and boundary convergence give conjugacy escape in each factor. Theorem~\ref{thm:PHM-intro} gives the factorwise invariant-or-singular dichotomy. Factor extraction passes this dichotomy to the irreducible lattice, and the criterion of \cite{BBHP22} gives charmenability. Property~{\rm (T)} gives charfiniteness.
	
	Section~\ref{sec:lattice-boundaries} recalls the boundary-structure criterion and factor extraction from \cite{BBHP22}. The proof of Theorem~\ref{thm:PHM-intro} occupies Section~\ref{sec:PHM}. The twin-root-datum and lattice framework is developed in Section~\ref{sec:twin}. Section~\ref{sec:essential-freeness} constructs the panel walks, proves essential freeness and derives the results stated above.

	\subsection*{Use of AI tools}
	This project started in spring 2026. We knew that the Poisson boundaries of \(G_+\) and \(G_-\) could be realized as the visual boundaries of the corresponding hyperbolic buildings, but that these spaces were not homogeneous. We therefore needed to replace the relative Mautner argument of \cite{BBHP22}. We did not find a fruitful strategy until the work of Caprace and Thom \cite{CT26} appeared, and our initial idea was to adapt their proof of the Howe--Moore property to obtain the required singularity-or-invariance dichotomy. 
	
	We worked with GPT-5.6 Sol in order to make this adaptation, but the discussion led to a simpler approach using the Howe--Moore property directly and reduced the problem to harmonic essential freeness for well-chosen random walks. The construction of the random walks in this paper appeared natural and is due to the authors. In order to prove essential freeness, part of our input was to use the recurrence of the codistance chain given in Proposition~\ref{prop:ef-codistance-recurrence}. After some interaction, GPT-6 Astra suggested using Lemma~\ref{lem:ef-conjugation-martingale} and provided a proof. The combination of this with Proposition~\ref{prop:ef-codistance-recurrence} gave the desired essential freeness. Aside from the arguments mentioned above, the rest of the ideas and the strategy are entirely due to the authors. GPT-5.6 Sol and GPT-6 Astra were then used to draft parts of the arguments and review some proofs. The authors checked and rewrote most of the arguments and take full responsibility for the mathematical content and the final text.

	\section{Irreducible lattices and boundary structures}\label{sec:lattice-boundaries}
	
	We recall the boundary-structure criterion for charmenability and then reduce
	its invariant-or-singular dichotomy to the ambient factors of an irreducible
	lattice.
	
	\subsection{Charmenability}\label{sec:charmenability}
	
	Let \(\Lambda\) be a countable discrete group. We denote by \(\mathscr P(\Lambda)\) the weak-$\ast$ compact convex space of positive definite functions \(\varphi:\Lambda\to\C\) normalized so that \(\varphi(e)=1\). For \(\varphi\in\mathscr P(\Lambda)\), we denote its GNS triple by \((\pi_\varphi,\mathscr H_\varphi,\xi_\varphi)\), so that
	\(\varphi(g)=\langle\pi_\varphi(g)\xi_\varphi,\xi_\varphi\rangle\) for every
	\(g\in\Lambda\).
	The conjugation action \(\Lambda\curvearrowright\mathscr P(\Lambda)\) is defined by
	\((\gamma\varphi)(g)=\varphi(\gamma^{-1}g\gamma)\) for every
	\(\gamma,g\in\Lambda\).
	A fixed point for this action is called a \emph{trace}. We denote by \(\Tr(\Lambda)\subset\mathscr P(\Lambda)\) the weak-$\ast$ compact convex space of all traces. The GNS correspondence identifies \(\Tr(\Lambda)\) with the tracial state space of the full group \(\rC^*\)-algebra \(\rC^*(\Lambda)\). For \(\tau\in\Tr(\Lambda)\), we put \(M_\tau=\pi_\tau(\Lambda)''\). An extreme point of \(\Tr(\Lambda)\) is called a \emph{character}. We denote by \(\Char(\Lambda)\subset\Tr(\Lambda)\) the set of all characters. Thus \(\tau\in\Tr(\Lambda)\) belongs to \(\Char(\Lambda)\) if and only if \(M_\tau\) is a factor. Any icc group \(\Lambda\) admits the trivial character \(1_\Lambda\) and the regular character \(\delta_e\).
	
	Following \cite{BBHP22}, the group \(\Lambda\) is \emph{charmenable} when the following properties hold:
	\begin{enumerate}[$(\rm i)$]
		\item every nonempty compact convex \(\Lambda\)-invariant subset of
		\(\mathscr P(\Lambda)\) contains an element of \(\Tr(\Lambda)\),
		\item every character \(\tau\in\Char(\Lambda)\) is either supported on the amenable radical \(\Rad(\Lambda)\), or \(M_\tau\) is amenable.
	\end{enumerate}
	A unitary representation is weakly mixing if it has no nonzero
	finite-dimensional subrepresentation.
	Given unitary representations \(\pi_i:\Lambda\to\cU(\mathscr H_i)\), we say that
	\(\pi_2\) is \emph{weakly contained} in \(\pi_1\) if the map
	\(\pi_1(\gamma)\mapsto\pi_2(\gamma)\), for \(\gamma\in\Lambda\), extends to a
	\(*\)-homomorphism
	\(\rC^*_{\pi_1}(\Lambda)\to\rC^*_{\pi_2}(\Lambda)\). Following \cite{BV93}, a
	unitary representation \(\pi:\Lambda\to\cU(\mathscr H_\pi)\) is
	\emph{amenable} if \(1_\Lambda\) is weakly contained in
	\(\pi\otimes\bar\pi\). We call \(\tau\in\Char(\Lambda)\) amenable if its GNS
	representation \(\pi_\tau\) is amenable. It is von Neumann amenable if
	\(M_\tau\) is amenable. Von Neumann amenability implies amenability.
	
	A charmenable group \(\Lambda\) is \emph{charfinite} if its amenable radical
	is finite, it has only finitely many isomorphism classes of unitary
	representations in each finite dimension, and every amenable character
	\(\tau\in\Char(\Lambda)\) is finite dimensional.
	
	\subsection{Boundary structures and stationary states}\label{sec:boundary-structures}
	
	Let \(G\) be a second countable locally compact group, and let \(M\) and
	\(N\) be von Neumann algebras endowed with continuous \(G\)-actions.
	Assume that \(M\) has separable predual, and denote its action by
	\(\alpha:G\curvearrowright M\). The action \(\alpha\) is \emph{ergodic} if
	\(M^G=\C1\). We write \(\UCP_G^\sigma(M,N)\) for the set
	of all normal \(G\)-equivariant unital completely positive maps from \(M\)
	to \(N\).
	
	\begin{samepage}
		\begin{definition}\label{def:boundary-structure}
			A \((G,N)\)-\emph{structure} on \(M\) is a map
			\(\rE\in\UCP_G^\sigma(M,N)\). It is \emph{faithful} if
			\(\rE(x^*x)=0\) implies \(x=0\).
			When \(N=\rL^\infty(B,\nu)\), we call \(\rE\) a
			\emph{boundary structure} over \((B,\nu)\). It is \emph{invariant}
			if its range is contained in \(\C1\).
		\end{definition}
	\end{samepage}
	
	If the action \(G\curvearrowright(B,\nu)\) is ergodic, invariance of \(\rE\) is equivalent to
	\(\rE\circ\alpha_g=\rE\) for every \(g\in G\). In general, the set
	\(\UCP_G^\sigma(M,\rL^\infty(B,\nu))\) may be empty.
	
	Suppose that \(\mu\in\Prob(G)\) is generating and that \((B,\nu)\) is the
	Poisson boundary of \((G,\mu)\). A normal state \(\varphi\in M_*\) is
	\(\mu\)-stationary if
	\(\varphi(x)=\int_G\varphi(\alpha_{g^{-1}}(x))\,\dd\mu(g)\) for every
	\(x\in M\).
	Writing \(\cS_\mu(M)\) for the set of normal \(\mu\)-stationary states on
	\(M\), integration induces an affine bijection
	\begin{equation}\label{eq:PB-correspondence}
		\UCP_G^\sigma\bigl(M,\rL^\infty(B,\nu)\bigr)
		\longrightarrow
		\cS_\mu(M),
		\qquad
		\rE\longmapsto\nu\circ\rE.
	\end{equation}
	See \cite[Proposition~4.2]{BBHP22}.
	
	The \emph{support} of a normal state \(\varphi\) is the least projection
	\(p\in M\) such that \(\varphi(p)=1\). If \(\varphi\) is
	\(\mu\)-stationary, the argument of \cite[Lemma~2.1]{BH21} shows that
	\(p\) is \(G\)-invariant. Thus, when \(G\curvearrowright M\) is ergodic,
	we have \(p=1\), so \(\varphi\) is faithful. In this case, by
	\cite[Proposition~2.7(2)]{BBHP22}, the two sets in
	\eqref{eq:PB-correspondence} are either both empty or both singletons.
	
	Given a boundary structure \(\rE:M\to\rL^\infty(B,\nu)\), choose a separable ultraweakly dense unital \(G\)-invariant
	\(\rC^*\)-subalgebra \(A\subset M\) on which the action is norm continuous,
	using the regularization argument in the proof of \cite[Proposition~XIII.1.2]{Ta03b}.
	The restriction of \(\rE\) is represented by a measurable state field
	\(\theta:B\to\cS(A)\), with \(\rE(x)(b)=\theta_b(x)\) for \(x\in A\).
	For \(g\in G\), put \(\rE_g=\rE\circ\alpha_g\). Two normal ucp maps \(\rE_1,\rE_2:M\to\rL^\infty(B)\) are \emph{singular}, written \(\rE_1\perp\rE_2\), if their state fields satisfy
	\(\|\theta_{1,b}-\theta_{2,b}\|=2\) for \(\nu\)-almost every \(b\in B\).
	This is independent of the compact model \(A\)
	\cite[Proposition~4.10]{BBHP22}. The structure is \(G\)-singular if the maps
	\((\rE_g)_{g\in G}\) are pairwise singular.
	
	\subsection{A criterion for charmenability}
	
	The criterion involves two properties of nonsingular actions. We use the
	conditional-expectation characterization of Zimmer amenability
	\cite[Theorems~A and~3.4]{AEG94} and the definition of metric ergodicity in
	\cite[Definition~3.2]{BF20}.
	
	\begin{definition}\label{def:amenability-metric-ergodicity}
		Let \(G\curvearrowright(X,\nu)\) be a nonsingular action on a standard
		probability space.
		\begin{enumerate}[$(\rm i)$]
			\item The action is \emph{amenable in Zimmer's sense} if there exists
			a \(G\)-equivariant conditional expectation
			\[
			P:\rL^\infty(G\times X,m_G\otimes\nu)
			\longrightarrow\rL^\infty(X,\nu),
			\]
			where \(m_G\) is a left Haar measure and \(G\) acts diagonally by
			\(g\cdot(h,x)=(gh,gx)\). We identify \(\rL^\infty(X,\nu)\) with
			the functions independent of the \(G\)-coordinate. The expectation
			\(P\) is not required to be normal.
			\item The action is \emph{metrically ergodic} if every measurable
			\(G\)-equivariant map from \(X\) to a separable metric space on which
			\(G\) acts continuously by isometries is essentially constant.
		\end{enumerate}
	\end{definition}
	
	Observe that metric ergodicity implies ergodicity by taking the target to be \(\{0,1\}\) with the trivial action.
	
	The following criterion uses these two properties to turn an
	invariant-or-singular dichotomy for boundary structures into charmenability.
	
	\begin{proposition}\label{prop:charm-criterion}
		Let \(\Gamma\) be a countable discrete group with trivial amenable radical,
		and let \((B,\nu)\) be a separable amenable and metrically ergodic
		\(\Gamma\)-space. Suppose that every faithful
		\((\Gamma,\rL^\infty(B))\)-structure on a von Neumann algebra with
		separable predual and an ergodic \(\Gamma\)-action is either invariant or
		\(\Gamma\)-singular. Then \(\Gamma\) is charmenable.
	\end{proposition}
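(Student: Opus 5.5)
The plan is to reduce the statement to the abstract charmenability criterion of \cite[Proposition~4.16]{BBHP22} (equivalently \cite[Theorem~4.3]{BBHP22}). That criterion is formulated exactly in terms of an amenable, metrically ergodic $\Gamma$-space $(B,\nu)$ over which every faithful boundary structure on an ergodic $\Gamma$-von Neumann algebra is invariant or $\Gamma$-singular. I would verify the two conclusions (i) and (ii) of charmenability separately, since the hypothesis on boundary structures feeds into each in a different way.

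For (i), let $\mathcal C\subset\mathscr P(\Gamma)$ be a nonempty compact convex $\Gamma$-invariant set. Amenability of $(B,\nu)$ in Zimmer's sense, via the $\Gamma$-equivariant conditional expectation of Definition~\ref{def:amenability-metric-ergodicity}, gives a measurable $\Gamma$-equivariant map $\beta:B\to\mathcal C$. I would form the associated boundary structure on the von Neumann algebra generated by the GNS data. Concretely, take $M$ to be the von Neumann algebra of a suitable representation, with the conjugation action, and reduce to an ergodic piece by ergodic decomposition, so that $\rE(x)(b)=\beta(b)(x)$. The dichotomy then gives two cases. If $\rE$ is invariant, $\beta$ is essentially constant, and its value is a $\Gamma$-fixed point of $\mathcal C$, hence a trace. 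If $\rE$ is $\Gamma$-singular, the states $\beta(b)$ and $\beta(gb)$ are singular for $g\neq e$. The argument of \cite{BBHP22} then shows that $\beta(b)$ restricted to nontrivial group elements vanishes on average, and one extracts $\delta_e\in\mathcal C$ (or its analogue) by integrating $\beta$ and using metric ergodicity to control the barycenter. In all cases $\mathcal C$ contains a trace.

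For (ii), let $\tau$ be a character with $M=M_\tau$ a factor. I would take $N=M\bar\otimes\rL^\infty(B)$, or rather use amenability of $B$ to produce a $\Gamma$-equivariant ucp map $\rE:M\to\rL^\infty(B)$ whose composition with $\nu$ extends $\tau$-stationarity, and which is faithful because $\tau$ is faithful on the factor. Here $\Gamma$ acts on $M$ by conjugation $\operatorname{Ad}\pi_\tau(\gamma)$, which is ergodic precisely because $M$ is a factor. The dichotomy again splits into two cases.
\begin{itemize}
\item[(a)] If $\rE$ is invariant, its range lies in $\C1$. Then $\rE$ is a $\Gamma$-invariant normal state, hence the trace, and the equivariant conditional expectation $M\bar\otimes\rL^\infty(B)\to\ldots$ restricted appropriately yields a hypertrace, so $M$ is amenable.
\item[(b)] If $\rE$ is $\Gamma$-singular, singularity of $\rE$ and $\rE\circ\operatorname{Ad}\pi_\tau(\gamma)$ for $\gamma\neq e$ forces $\tau(\gamma)=0$. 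This is the Powers-averaging type argument of \cite[Proposition~4.14]{BBHP22}: a unitary $u$ implementing a map singular to the identity has $\tau(u)=0$. Thus $\tau=\delta_e$, which is supported on $\Rad(\Gamma)=\{e\}$.
\end{itemize}

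Since all of this is exactly the content of \cite[Proposition~4.16]{BBHP22}, the efficient route is to check that our hypotheses match theirs (separability, amenability, metric ergodicity, trivial amenable radical) and cite it. The expected obstacle is step (i): deducing a trace from the singular case requires metric ergodicity to pass from pointwise singularity to a global statement about the barycenter. I would rely on \cite{BBHP22} for that step rather than redo it.
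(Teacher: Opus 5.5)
Your proposal is correct and takes the paper's route: the paper's proof is just the citation \cite[Proposition~4.16]{BBHP22}, whose hypotheses coincide with those stated here, so checking the match and citing is the whole argument. Do not treat your heuristic sketch as an independent proof, though: for instance in (ii)(a), an invariant boundary structure on $M_\tau$ itself is necessarily $x\mapsto\tau(x)1$, which exists for every character, so it cannot produce a hypertrace; amenability only follows when the structure is defined on a larger $\Gamma$-algebra containing $M_\tau$, e.g.\ one built from $\mathcal B(\rL^2(M_\tau))$.
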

	
	This is \cite[Proposition~4.16]{BBHP22}. The criterion only requires a
	dichotomy for the boundary structures that exist. It does not require the
	existence of a noninvariant structure. We shall also use the fact that
	singularity on a von Neumann subalgebra implies singularity on the ambient
	algebra \cite[Remark~4.12]{BBHP22}.
	
	\subsection{Reduction to the ambient factors}
	
	Let \(G_1\) and \(G_2\) be second countable locally compact groups, put
	\(G=G_1\times G_2\), choose admissible measures
	\(\mu_i\in\Prob(G_i)\), and denote their Poisson boundaries by
	\((B_i,\nu_i)\). Put
	\((B,\nu)=(B_1\times B_2,\nu_1\otimes\nu_2)\).
	
	Each \((B_i,\nu_i)\) admits a standard probability-space model and is
	amenable and metrically ergodic as a \(G_i\)-space by
	\cite[Proposition~2.2(iii) and Theorem~2.7]{BF14}.
	These two properties pass to the product \(G\)-space \((B,\nu)\) by
	\cite[Lemma~3.6]{BF20} and to its restriction to any lattice
	\(\Gamma<G\) by \cite[Lemma~3.5]{BF20}.
	Thus \((B,\nu)\) satisfies the boundary hypotheses of
	Proposition~\ref{prop:charm-criterion}.
	
	Now let \(\Gamma<G\) be a lattice with dense coordinate projections.
	For a \((\Gamma,\rL^\infty(B))\)-structure \((M,\rE)\), the factor-extraction theorem
	\cite[Theorem~5.8]{BBHP22} provides von Neumann subalgebras \(M_i\subset M\) of
	\(G_i\)-continuous elements, endowed with extended \(G_i\)-actions, such that
	\(\rE|_{M_i}\) is a normal \(G_i\)-equivariant ucp map into
	\(\rL^\infty(B_i)\). If \(\rE\) is faithful and noninvariant, then
	some \(i\in\{1,2\}\) satisfies
	\begin{equation}\label{eq:factor-extraction}
		M_i\neq\C1,\qquad
		\rE(M_i)\subset \rL^\infty(B_i),\qquad
		\rE|_{M_i}\text{ is not }G_i\text{-invariant}.
	\end{equation}
	No transitivity assumption on \(B_i\) occurs in this theorem.
	
	The next result reduces the lattice-level invariant-or-singular dichotomy to
	the corresponding dichotomy for each factor Poisson boundary.
	
	\begin{theorem}\label{thm:product-reduction}
		Let \(G_1\) and \(G_2\) be second countable locally compact groups. Let
		\(\Gamma<G_1\times G_2\) be a lattice with dense and injective coordinate
		projections \(\pi_i:\Gamma\to G_i\). For each \(i\in\{1,2\}\), let
		\((B_i,\nu_i)\) be the Poisson boundary of an admissible measure \(\mu_i\)
		on \(G_i\). Assume that
		\begin{enumerate}[$(\rm i)$]
			\item \(\Rad(\Gamma)=\{e\}\),
			\item for every \(i\in\{1,2\}\), every faithful
			\((G_i,\rL^\infty(B_i))\)-structure \(\rE\) on a von Neumann algebra with
			separable predual and an ergodic \(G_i\)-action is either invariant or satisfies
			\[
			\rE\perp \rE_{\pi_i(\gamma)}
			\quad\text{for every }\gamma\in\Gamma\setminus\{e\}.
			\]
		\end{enumerate}
		Then \(\Gamma\) is charmenable. If moreover \(\Gamma\) has
		property~{\rm (T)}, then it is charfinite.
	\end{theorem}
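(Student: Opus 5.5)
We will apply Proposition~\ref{prop:charm-criterion} to the product boundary \((B,\nu)=(B_1\times B_2,\nu_1\otimes\nu_2)\) with its restricted \(\Gamma\)-action. As recalled before the statement, \((B,\nu)\) is a separable, amenable and metrically ergodic \(\Gamma\)-space, by \cite{BF14} for each factor, \cite[Lemma~3.6]{BF20} for the product, and \cite[Lemma~3.5]{BF20} for the restriction to the lattice. Hypothesis (i) supplies trivial amenable radical. It therefore remains to check the dichotomy: every faithful \((\Gamma,\rL^\infty(B))\)-structure \(\rE:M\to\rL^\infty(B)\) on a von Neumann algebra \(M\) with separable predual and ergodic \(\Gamma\)-action is either invariant or \(\Gamma\)-singular.

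Assume \(\rE\) is faithful and noninvariant. By the factor-extraction theorem \cite[Theorem~5.8]{BBHP22} there is some \(i\) and a von Neumann subalgebra \(M_i\subset M\) satisfying \eqref{eq:factor-extraction}. Thus \(M_i\) carries a continuous \(G_i\)-action extending the \(\Gamma\)-action through \(\pi_i\), and \(\rE_i:=\rE|_{M_i}:M_i\to\rL^\infty(B_i)\) is a normal \(G_i\)-equivariant ucp map that is not \(G_i\)-invariant. Before applying (ii) to \(\rE_i\), we verify its hypotheses.

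\emph{Separable predual.} \(M_i\) has separable predual, being a von Neumann subalgebra of \(M\).

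\emph{Faithfulness.} \(\rE_i\) is faithful because \(\rE\) is.

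\emph{Ergodicity.} \(M_i^{G_i}\subset M^{\pi_i(\Gamma)}=M^\Gamma=\C1\). The inclusion holds because \(\Gamma\) acts on \(M_i\subset M\) via \(\pi_i\), and the action on \(M_i\) is the restriction of the action on \(M\).

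Hypothesis (ii) then gives \(\rE_i\perp(\rE_i)_{\pi_i(\gamma)}\) for every \(\gamma\neq e\). By equivariance, \((\rE_i)_{\pi_i(\gamma)}=\rE\circ\alpha_\gamma|_{M_i}=\rE_\gamma|_{M_i}\). So \(\rE|_{M_i}\perp\rE_\gamma|_{M_i}\) as maps into \(\rL^\infty(B_i)\).

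The main obstacle is transferring this singularity from \(M_i\) with values in \(\rL^\infty(B_i)\) to all of \(M\) with values in \(\rL^\infty(B_1\times B_2)\). The state field of \(\rE|_{M_i}\), viewed over \(B\), is obtained from the field over \(B_i\) by composing with the coordinate projection \(B\to B_i\). Pairwise norm distance \(2\) holds \(\nu_i\)-a.e. on \(B_i\), hence \(\nu\)-a.e. on \(B\) by Fubini, since \(\nu\) pushes forward to \(\nu_i\). One should check that the state field attached to \(\rE|_{M_i}\) inside \(\rL^\infty(B)\) really is the pullback. This holds because \(\rE(M_i)\subset\rL^\infty(B_i)\subset\rL^\infty(B)\), and singularity is independent of the chosen \(\rC^*\)-model \cite[Proposition~4.10]{BBHP22}. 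Then \cite[Remark~4.12]{BBHP22}, which says singularity on a subalgebra implies singularity on the ambient algebra, gives \(\rE\perp\rE_\gamma\).

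Pairwise singularity of \((\rE_\gamma)_{\gamma\in\Gamma}\) then follows. For \(\gamma\neq\delta\), singularity of \(\rE_\gamma\) and \(\rE_\delta\) is obtained from \(\rE\perp\rE_{\gamma^{-1}\delta}\) by translating the state fields by \(\gamma\). This translation preserves the a.e.\ statement because the \(\Gamma\)-action on \(B\) is nonsingular and \(\rE\) is equivariant. Hence \(\rE\) is \(\Gamma\)-singular, and Proposition~\ref{prop:charm-criterion} yields charmenability.

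For the last assertion, recall that a charmenable group with finite amenable radical and property~(T) is charfinite \cite[p.~299]{BBH23}. Here \(\Rad(\Gamma)=\{e\}\) is trivial, so property~(T) gives charfiniteness.
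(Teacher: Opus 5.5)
Your proposal is correct and follows essentially the same route as the paper: you use the product boundary with Proposition~\ref{prop:charm-criterion}, factor extraction, hypothesis (ii) on the extracted factor, and the transfer of singularity along the pullback $B\to B_i$ via \cite[Proposition~4.10 and Remark~4.12]{BBHP22}. The only cosmetic difference is in the pairwise step: you translate state fields over $B$, which uses equivariance and nonsingularity, whereas the paper obtains $(\rE_i)_{\pi_i(\gamma)}\perp(\rE_i)_{\pi_i(\eta)}$ on the factor by common precomposition with $\alpha_{\pi_i(\gamma)}$, which needs no nonsingularity because precomposing by an automorphism preserves the pointwise norm distance.
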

	
	\begin{proof}
		Put \((B,\nu)=(B_1\times B_2,\nu_1\otimes\nu_2)\).
		As explained above, this is a separable amenable and metrically ergodic
		\(\Gamma\)-space. By Proposition~\ref{prop:charm-criterion}, it suffices
		to prove the invariant-or-singular dichotomy for boundary structures over
		\((B,\nu)\).
		
		Let \(M\) be a von Neumann algebra with separable predual endowed with
		an ergodic \(\Gamma\)-action, and let \(\rE\) be a faithful
		\((\Gamma,\rL^\infty(B))\)-structure on \(M\). Suppose that \(\rE\) is not
		invariant. By \eqref{eq:factor-extraction}, there is \(i\in\{1,2\}\) such that
		\(\rE_i=\rE|_{M_i}\) is a noninvariant faithful
		\((G_i,\rL^\infty(B_i))\)-structure. The action \(G_i\curvearrowright M_i\) is
		ergodic. Indeed, the extended \(G_i\)-action restricts along \(\pi_i\) to
		the original \(\Gamma\)-action, so
		every \(x\in M_i\) fixed by \(G_i\) is fixed by \(\Gamma\) and therefore
		scalar.
		
		Let \(\gamma,\eta\in\Gamma\) be distinct. Assumption~$(\rm ii)$ and injectivity of
		\(\pi_i\) give
		\(\rE_i\perp(\rE_i)_{\pi_i(\eta\gamma^{-1})}\).
		Common precomposition by \(\alpha_{\pi_i(\gamma)}\) shows that
		\((\rE_i)_{\pi_i(\gamma)}\perp(\rE_i)_{\pi_i(\eta)}\).
		
		Let \(p_i:B\to B_i\) be the coordinate projection and let \(p_i^*\)
		denote pullback of functions. The compatibility of the actions gives
		\[
		\rE_\kappa|_{M_i}
		=p_i^*\circ(\rE_i)_{\pi_i(\kappa)}
		\qquad(\kappa\in\{\gamma,\eta\}).
		\]
		Since \((p_i)_*\nu=\nu_i\), the state-field characterization in
		\cite[Proposition~4.10]{BBHP22} gives
		\(\rE_\gamma|_{M_i}\perp\rE_\eta|_{M_i}\).
		It follows from \cite[Remark~4.12]{BBHP22} that
		\(\rE_\gamma\perp\rE_\eta\). Since \(\gamma\) and \(\eta\) were
		arbitrary distinct elements, \(\rE\) is \(\Gamma\)-singular.
		Proposition~\ref{prop:charm-criterion} now gives charmenability.
		The assertion about charfiniteness follows from
		\cite[p.~299]{BBH23}.
	\end{proof}
	
	\section{Poisson--Howe--Moore singularity}\label{sec:PHM}
	
	Throughout this section, \(G\) is a second countable locally compact group,
	\(\mu\in\Prob(G)\) is generating and \((B,\nu)\) is the Poisson boundary of
	\((G,\mu)\). Let \(\alpha:G\curvearrowright M\) be a continuous action.
	We use the conventions
	\[
	(g\varphi)(x)=\varphi(\alpha_{g^{-1}}(x)),
	\qquad
	(g f)(b)=f(g^{-1}b).
	\]
	Let \(\rE:M\to\rL^\infty(B)\) be a normal \(G\)-equivariant ucp map, let
	\(A\subset M\) be a separable unital \(G\)-invariant \(\rC^*\)-subalgebra on
	which the action is norm continuous, and let
	\(\theta:B\to\cS(A)\) represent \(\rE|_A\). Equivariance gives, for every
	fixed \(g\in G\),
	\begin{equation}\label{eq:field-equivariance}
		\theta_{gb}=g\theta_b
		\quad\text{for \(\nu\)-almost every \(b\in B\)}.
	\end{equation}
	
	\subsection{Conjugacy escape and singularity}
	
	\begin{definition}\label{def:CE}
		An element \(s\in G\) satisfies \emph{\(\mu\)-conjugacy escape} if
		\[
		\mu^{*n}\bigl(\{h\in G\mid h^{-1}sh\in K\}\bigr)\longrightarrow0
		\]
		for every compact subset \(K\subset G\).
	\end{definition}
	
	The singularity criterion of \cite[Proposition~4.14]{BBHP22} uses a
	homogeneous boundary \(G/P\). An extremal boundary structure is then
	determined by one extremal \(P\)-invariant state on a separable
	\(G\)-\(\rC^*\)-algebra. When \(P\) is stably self-normalizing and has the
	relative Mautner property, that state and each of its translates are either
	equal or singular. If the structure is noninvariant, the stabilizer \(Q\) of
	the state is a proper closed subgroup containing \(P\). A null fixed-point
	set for \(s\) on \(G/Q\) then gives \(\rE\perp\rE_s\).
	
	Theorem~\ref{thm:PHM} replaces this use of the relative Mautner phenomenon
	with the Howe--Moore property and \(\mu\)-conjugacy escape. The proof works
	with the measurable state field over \(B\) and its average
	\(\varphi=\nu\circ\rE\). If singularity fails, stationarity produces
	conjugates \(k_j\to\infty\) of \(s\) for which
	\(\|\varphi-\varphi\circ\alpha_{k_j}\|\) stays uniformly below \(2\).
	The Powers--St\o rmer inequality and the Howe--Moore property then produce
	a nonzero invariant vector in the standard representation of \(M\).
	Uniqueness of the normal stationary state forces \(\rE\) to be invariant.
	This argument requires neither a homogeneous realization of \(B\) nor
	extremality of the individual states in the boundary field.
	
	\begin{theorem}\label{thm:PHM}
		Assume that \(G\) has the Howe--Moore property. Let \(M\) be a von Neumann
		algebra with separable predual endowed with an ergodic continuous
		\(G\)-action, and let \(\rE:M\to\rL^\infty(B,\nu)\)
		be a faithful normal \(G\)-equivariant ucp map. Fix \(s\in G\) satisfying
		\(\mu\)-conjugacy escape. Then either \(\rE\) is \(G\)-invariant or
		\(\rE\perp\rE_s\).
	\end{theorem}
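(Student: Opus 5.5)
We argue by contraposition: assume \(\rE\not\perp\rE_s\) and show that \(\rE\) is \(G\)-invariant. Fix the separable model \(A\subset M\) and the state field \(\theta:B\to\cS(A)\), and put \(\varphi=\nu\circ\rE\). By \eqref{eq:PB-correspondence} this is the unique normal \(\mu\)-stationary state, and it is faithful by ergodicity. Failure of singularity means that the set \(\{b\in B:\|\theta_b-s\theta_b\|<2\}\) has positive \(\nu\)-measure; here we use that \(\rE_s\) has state field \(b\mapsto\theta_{s^{-1}b}\) composed with \(\alpha_s\), which by \eqref{eq:field-equivariance} is essentially \(s^{-1}\theta_{b}\) up to relabelling. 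Shrinking this set, we obtain \(\delta>0\) and a set \(C\subset B\) with \(\nu(C)>0\) such that \(\|\theta_b-\theta_b\circ\alpha_s\|\le 2-\delta\) for all \(b\in C\).

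The next step transfers this pointwise estimate to the averaged state along the random walk. Since \(\nu=\mu^{*n}*\nu\), stationarity gives \(\varphi=\int_G h\varphi\,\dd\mu^{*n}(h)\). We use the boundary convergence \(h_n\nu\to\delta_{\mathrm{bnd}(\omega)}\), equivalently the martingale convergence \(h_n\varphi\to\theta_{\mathrm{bnd}(\omega)}\) weak-\(\ast\) on \(A\), which is the standard Poisson-boundary fact that \(\theta_b=\lim_n h_n(\omega)\varphi\) for \(b=\mathrm{bnd}(\omega)\). Upgrading this to norm convergence in \(M_*\) is delicate, so we work instead with the estimate \(\|h\varphi-h\varphi\circ\alpha_s\|=\|\varphi-\varphi\circ\alpha_{h^{-1}sh}\|\). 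By lower semicontinuity of the norm under weak-\(\ast\) limits, we obtain
\[
\liminf_n\|\varphi-\varphi\circ\alpha_{k_n}\|\ \ge\ \|\theta_b-\theta_b\circ\alpha_s\|,
\qquad k_n=h_n^{-1}sh_n .
\]
This inequality points the wrong way. We need an upper bound, and obtaining it is the main obstacle. The first approach we would try is to write \(\theta_b\) as the density limit of \(h_n\varphi\) and use the Radon--Nikodym martingale \(\frac{\dd h_n^{-1}\nu}{\dd\nu}\). Alternatively, we would decompose \(\varphi=\int\theta_b\,\dd\nu(b)\) and estimate \(\|\varphi-\varphi\circ\alpha_k\|\le 2-\delta'\) using overlap: \(\varphi\) and \(\varphi\circ\alpha_k\) both majorize \(\nu(C')\) times a common positive part whenever \(h_n^{-1}C\) captures mass. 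The goal of this step is to find, with probability bounded below, elements \(k_j=h_j^{-1}sh_j\) with \(\|\varphi-\varphi\circ\alpha_{k_j}\|\le 2-\delta'\). Conjugacy escape (CE\(_s\)) then forces these \(k_j\to\infty\) in \(G\), since the mass of \(\mu^{*n}\) on \(\{h:h^{-1}sh\in K\}\) tends to \(0\) while the good event has positive probability.

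Now pass to the standard form \(L^2(M)\) with the implementing unitary representation \(U\) of \(G\), and let \(\xi_\varphi\) be the positive cone vector of \(\varphi\). The Powers--St\o rmer inequality \(\|\xi_\psi-\xi_\omega\|^2\le\|\psi-\omega\|\) gives
\[
\|\xi_\varphi-U_{k_j}^*\xi_\varphi\|^2\le 2-\delta',
\]
so \(\mathrm{Re}\langle U_{k_j}\xi_\varphi,\xi_\varphi\rangle\ge\delta'/2\) for all \(j\). Since \(k_j\to\infty\), the Howe--Moore property implies that the restriction of \(U\) to the orthogonal complement of the \(G\)-invariant vectors is mixing. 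Hence \(\xi_\varphi\) has a nonzero projection \(\eta\) onto \(L^2(M)^G\). The vector \(\eta\) is fixed by the modular conjugation and lies in the positive cone, because projection onto the invariants is a limit of averages of \(U_g\xi_\varphi\) and the cone is closed and convex. Therefore \(\omega_\eta\) is a nonzero \(G\)-invariant normal positive functional. Normalizing it gives a \(G\)-invariant, hence \(\mu\)-stationary, normal state. By uniqueness of the normal stationary state this state equals \(\varphi\), so \(\varphi\) is \(G\)-invariant. Under \eqref{eq:PB-correspondence}, the constant map \(x\mapsto\varphi(x)1\) is a boundary structure corresponding to \(\varphi\), so uniqueness yields \(\rE=\varphi(\cdot)1\), and \(\rE\) is invariant.
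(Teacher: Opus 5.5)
\textbf{There is a gap at the central step.} Your outer framework is sound and agrees with the paper: contraposition, the faithful stationary state $\varphi=\nu\circ\rE$, Powers--St\o rmer, Howe--Moore, and uniqueness of the normal stationary state. What is missing is the step everything else rests on. You never produce elements $k_j\to\infty$ with $\|\varphi-\varphi\circ\alpha_{k_j}\|\le 2-\delta'$.

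You correctly note that the limit $h_n\varphi\to\theta_b$ only gives a lower bound, by weak-$\ast$ lower semicontinuity. You then list two approaches ``to try'' and carry out neither. The ``common positive part'' idea also fails as stated. The predual of a noncommutative von Neumann algebra is not a lattice: two distinct, non-orthogonal pure states on $M_2(\C)$ are at distance $<2$, yet they dominate no common nonzero positive functional. Since your Howe--Moore step depends entirely on this bound, the proposal does not yet prove the theorem.

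\textbf{The missing idea.} It is elementary and needs only stationarity of $\nu$, not convergence of the random walk.
\begin{enumerate}[$(\rm i)$]
\item On $A$ one has $\varphi-\varphi\circ\alpha_k=\int_B(\theta_b-\theta_b\circ\alpha_k)\,\dd\nu(b)$. The norm of a normal functional can be computed on the unit ball of $A$ by Kaplansky density, so $\|\varphi-\varphi\circ\alpha_k\|\le\int_B\|\theta_b-\theta_b\circ\alpha_k\|\,\dd\nu(b)$.
\item For $k=h^{-1}sh$, equivariance of the field gives $\|\theta_b-\theta_b\circ\alpha_k\|=\|\theta_{hb}-\theta_{hb}\circ\alpha_s\|$ for almost every $b$. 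So the integrand is at most $2-\delta$ on $h^{-1}C$, and the integral is at most $2-\delta\,\nu(h^{-1}C)$.
\item Stationarity gives $\int_G\nu(h^{-1}C)\,\dd\mu^{*n}(h)=\nu(C)=:p$. Conjugacy escape \textup{(CE$_s$)} gives $\mu^{*n}(\{h\mid h^{-1}sh\in K\})<p/2$ for large $n$. Hence some $h$ with $h^{-1}sh\notin K$ satisfies $\nu(h^{-1}C)\ge p/2$.
\end{enumerate}
Running this along a compact exhaustion of $G$ gives $k_j\to\infty$ with $\|\varphi-\varphi\circ\alpha_{k_j}\|\le 2-\delta p/2$. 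This is exactly the input your last paragraph needs.

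A smaller point: your positive-cone argument for $P\xi_\varphi$ is correct if ``limit of averages'' is read as ``element of the closed convex hull of the orbit''. It is unnecessary, though, since $x\mapsto\langle xP\xi_\varphi,P\xi_\varphi\rangle$ is already a normal positive $G$-invariant functional.
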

	
	\begin{proof}
		We prove that failure of singularity forces \(\rE\) to be invariant.
		Choose a separable ultraweakly dense unital
		\(G\)-invariant \(\rC^*\)-subalgebra \(A\subset M\) on
		which the action is norm continuous. Let \(\theta:B\to\cS(A)\) be
		the state field associated with \(\rE|_A\).
		For \(t\in G\), put \(d_t(b)=\|\theta_b-\theta_{t^{-1}b}\|\).
		Suppose that \(\rE\not\perp\rE_s\). Then \(d_s<2\) on a set of positive
		measure. Thus, after discarding a null set, there are
		\(\delta>0\) and a measurable set \(D\subset B\), with \(p=\nu(D)>0\), such
		that \(d_s(b)\leq2-\delta\) for every \(b\in D\).
		
		We first choose escaping conjugates along which this bound persists.
		Since \(G\) is second countable, choose compact sets \(K_j\) such that
		\(K_j\subset\operatorname{int}(K_{j+1})\) and
		\(G=\bigcup_{j\geq1}K_j\). Conjugacy escape allows us to choose strictly
		increasing integers \((n_j)_{j\in\N_{\geq1}}\) such that
		\begin{equation}\label{eq:CE-choice}
			\mu^{*n_j}\bigl(\{h\in G\mid h^{-1}sh\in K_j\}\bigr)<p/2.
		\end{equation}
		Stationarity gives
		\begin{equation}\label{eq:stationarity-D}
			\int_G\nu(h^{-1}D)\,\dd\mu^{*n_j}(h)=p.
		\end{equation}
		Put \(S_j=\{h\in G\mid h^{-1}sh\notin K_j\}\).
		Equations~\eqref{eq:CE-choice} and \eqref{eq:stationarity-D} imply
		\(\int_{S_j}\nu(h^{-1}D)\,\dd\mu^{*n_j}(h)>p/2\). We may therefore choose
		\(h_j\in S_j\) such that \(\nu(h_j^{-1}D)\geq p/2\).
		Put \(k_j=h_j^{-1}sh_j\). Then \(k_j\notin K_j\), so \(k_j\to\infty\).
		
		For every \(j\in\N_{\geq1}\), using \eqref{eq:field-equivariance} and the
		isometric action on \(\cS(A)\), we obtain, for almost every \(b\in B\),
		\begin{align}
			d_{k_j}(b)
			&=\|\theta_b-\theta_{h_j^{-1}s^{-1}h_jb}\|\notag\\
			&=\|\theta_{h_jb}-\theta_{s^{-1}h_jb}\|
			=d_s(h_jb).
			\label{eq:transport-distance}
		\end{align}
		Hence \(d_{k_j}\leq2-\delta\) on \(h_j^{-1}D\).
		
		We now pass from the state field to the normal state
		\(\varphi=\nu\circ\rE\). It is faithful because \(\rE\) is faithful, and
		it is \(\mu\)-stationary by \eqref{eq:PB-correspondence}.
		For each \(j\in\N_{\geq1}\), put
		\(\omega_j=\varphi-\varphi\circ\alpha_{k_j}\in M_*\).
		Normality of \(\omega_j\) and Kaplansky's density theorem give
		\begin{equation}\label{eq:kaplansky-norm}
			\|\omega_j\|
			=\sup\bigl\{|\omega_j(a)|\mid a\in A,\ \|a\|\leq1\bigr\}.
		\end{equation}
		For every \(a\in A\) with \(\|a\|\leq1\), equivariance of the state field
		gives
		\begin{align*}
			\omega_j(a)
			&=\int_B\bigl(\theta_b(a)-\theta_b(\alpha_{k_j}(a))\bigr)\,\dd\nu(b)\\
			&=\int_B\bigl(\theta_b(a)-\theta_{k_j^{-1}b}(a)\bigr)\,\dd\nu(b),\\
			|\omega_j(a)|
			&\leq\int_B\|\theta_b-\theta_{k_j^{-1}b}\|\,\dd\nu(b).
		\end{align*}
		Taking the supremum in \eqref{eq:kaplansky-norm} and then using
		\(\nu(h_j^{-1}D)\geq p/2\), we obtain
		\begin{align}
			\|\varphi-\varphi\circ\alpha_{k_j}\|
			&\leq\int_B\|\theta_b-\theta_{k_j^{-1}b}\|\,\dd\nu(b)\notag\\
			&\leq(2-\delta)\nu(h_j^{-1}D)+2\bigl(1-\nu(h_j^{-1}D)\bigr)\notag\\
			&\leq2-\frac{\delta p}{2}.
			\label{eq:normal-overlap}
		\end{align}
		
		To apply the Howe--Moore property, let \((M,\rL^2(M),J,\rL^2(M)_+)\)
		be the standard form of \(M\), and let
		\(U:G\to\cU(\rL^2(M))\) be the canonical strongly continuous implementation
		of \(\alpha\), as in \cite[Theorems~IX.1.14 and~IX.1.15]{Ta03a}.
		Denote by \(\xi_\varphi\in\rL^2(M)_+\) the unique unit vector implementing
		\(\varphi\) \cite[Theorem~IX.1.2$(\rm iv)$]{Ta03a}. The vector implementing
		\(\varphi\circ\alpha_{k_j}\) is \(U_{k_j}^*\xi_\varphi\). By the
		Powers--St\o rmer inequality \cite[Theorem~IX.1.2$(\rm iv)$]{Ta03a},
		\[
		\|\xi_\varphi-U_{k_j}^*\xi_\varphi\|^2
		\leq\|\varphi-\varphi\circ\alpha_{k_j}\|
		\leq2-\frac{\delta p}{2}.
		\]
		It follows that
		\begin{equation}\label{eq:coefficient-lower}
			\operatorname{Re}\langle\xi_\varphi,U_{k_j}^*\xi_\varphi\rangle
			\geq\frac{\delta p}{4}.
		\end{equation}
		
		Let \(P\) be the orthogonal projection onto \(\rL^2(M)^G\).
		The restriction of \(U\) to \((1-P)\rL^2(M)\) has no invariant vectors.
		Since \(k_j^{-1}\to\infty\), the Howe--Moore property implies
		\[
		\langle\xi_\varphi,U_{k_j}^*\xi_\varphi\rangle
		\longrightarrow\|P\xi_\varphi\|^2.
		\]
		Together with \eqref{eq:coefficient-lower}, this gives
		\(\|P\xi_\varphi\|^2\geq\delta p/4>0\). Hence the vector functional
		\(\psi(x)=\langle xP\xi_\varphi,P\xi_\varphi\rangle\) is a nonzero normal
		positive \(G\)-invariant functional. After normalization,
		it is a normal \(\mu\)-stationary state. By ergodicity and uniqueness of
		the normal stationary state \cite[Proposition~2.7(2)]{BBHP22}, this
		normalized functional equals \(\varphi\). Thus \(\varphi\) is \(G\)-invariant.
		
		The constant map \(x\mapsto\varphi(x)1\) is therefore a boundary structure
		with associated state \(\varphi\). The bijection
		\eqref{eq:PB-correspondence} identifies it with \(\rE\), proving that
		\(\rE\) is \(G\)-invariant.
	\end{proof}
	
	\begin{proof}[Proof of Theorem~\ref{thm:twin-main}]
		For each \(i\in\{1,2\}\), let \((B_i,\nu_i)\) be the Poisson boundary
		of \((G_i,\mu_i)\). The conjugacy-escape assumption and
		Theorem~\ref{thm:PHM} show that every faithful
		\((G_i,\rL^\infty(B_i))\)-structure \(\rE\) on a von Neumann algebra with
		separable predual and an ergodic \(G_i\)-action is either invariant or satisfies
		\[
		\rE\perp\rE_{\pi_i(\gamma)}
		\qquad\text{for every }\gamma\in\Gamma\setminus\{e\}.
		\]
		Theorem~\ref{thm:product-reduction} therefore gives charmenability of
		\(\Gamma\), and charfiniteness when \(\Gamma\) has property~{\rm (T)}.
	\end{proof}
	
	\subsection{A geometric criterion for escape}
	
	The next lemma shows that \(s\) satisfies conjugacy escape whenever the
	boundary limit of the random walk is almost surely not fixed by \(s\).
	
	\begin{lemma}\label{lem:geometric-CE}
		Let \(G\) act continuously by isometries on a proper geodesic
		Gromov-hyperbolic space \(X\). Fix \(o\in X\) and \(s\in G\). Let
		\((\Omega,\mathbf P)\) carry independent \(G\)-valued random variables
		\((Y_n)_{n\in\N_{\geq1}}\) with law \(\mu\), and put
		\(H_n=Y_1\cdots Y_n\) for \(n\in\N_{\geq1}\). Assume that there is a
		measurable random variable
		\(\xi:\Omega\to\partial X\) such that
		\(H_n(\omega)o\to\xi(\omega)\) for almost every \(\omega\in\Omega\), and
		that \(\xi_*\mathbf P=\nu\). If \(\nu(\Fix_{\partial X}(s))=0\), then \(s\)
		satisfies \(\mu\)-conjugacy escape.
	\end{lemma}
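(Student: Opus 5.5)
We argue by contradiction. Suppose \(s\) fails \(\mu\)-conjugacy escape. Then there are a compact \(K\subset G\), a constant \(c>0\) and infinitely many \(n\) with
\[
\mathbf P\bigl(H_n^{-1}sH_n\in K\bigr)=\mu^{*n}\bigl(\{h\mid h^{-1}sh\in K\}\bigr)\geq c,
\]
where we use that \(H_n\) has law \(\mu^{*n}\). Put \(E_n=\{H_n^{-1}sH_n\in K\}\) and \(E=\limsup_n E_n\). Reverse Fatou, or simply \(\mathbf P(\bigcup_{m\geq n}E_m)\geq c\) for every \(n\), gives \(\mathbf P(E)\geq c>0\). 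We will show that for almost every \(\omega\in E\) the limit point \(\xi(\omega)\) is fixed by \(s\). This yields \(\nu(\Fix_{\partial X}(s))\geq\mathbf P(E)>0\), contradicting the hypothesis. Measurability of \(\Fix_{\partial X}(s)\) is clear: \(s\) acts by a homeomorphism of \(\partial X\), so its fixed set is closed.

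The geometric step is as follows. Set \(R=\sup_{k\in K}d(ko,o)\). This is finite because the action is continuous and \(K\) is compact, so \(k\mapsto d(ko,o)\) is continuous. On \(E_n\) we have
\[
d(sH_no,H_no)=d(H_n^{-1}sH_no,o)\leq R,
\]
since \(G\) acts by isometries. Fix \(\omega\in E\) with \(H_n(\omega)o\to\xi(\omega)\), and take a subsequence \(n_k\) with \(\omega\in E_{n_k}\). Then \(x_k=H_{n_k}(\omega)o\to\xi(\omega)\) in \(X\cup\partial X\), and \(sx_k\to s\xi(\omega)\) by continuity of the extended action of the isometry \(s\) on the Gromov compactification. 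On the other hand, \(d(sx_k,x_k)\leq R\) while \(x_k\to\infty\). In a Gromov-hyperbolic space, two sequences that converge to infinity and stay at bounded distance converge to the same boundary point: the Gromov product satisfies \((x_k\cdot sx_k)_o\geq d(o,x_k)-R\to\infty\). Hence \(s\xi(\omega)=\xi(\omega)\).

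The main obstacle is a point of care rather than a real difficulty. We must justify that \(x_k\to\infty\), meaning \(d(o,x_k)\to\infty\); this follows because \(x_k\) converges to a point of \(\partial X\), and for a proper space convergence to a boundary point forces the sequence to leave every bounded set. We must also confirm that isometries extend to homeomorphisms of \(X\cup\partial X\), which is standard for proper geodesic hyperbolic spaces. With these in hand, the set \(E\) minus a null set lies in \(\xi^{-1}(\Fix_{\partial X}(s))\), and \(\xi_*\mathbf P=\nu\) gives the contradiction.
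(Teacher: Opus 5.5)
Your proof is correct and follows essentially the same route as the paper. The paper shows directly that almost surely \(H_n^{-1}sH_n\in K\) holds only finitely often, using the same bounded-distance/same-boundary-limit argument, and then concludes \(\mathbf P(H_n^{-1}sH_n\in K)\to0\) by dominated convergence; this is the contrapositive of your reverse-Fatou argument on \(\limsup_n E_n\).
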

	
	\begin{proof}
		Fix a compact set \(K\subset G\). Continuity of the orbit map gives
		\(R_K=\sup_{k\in K}d(o,ko)<\infty\). If \(h\in G\) satisfies
		\(h^{-1}sh\in K\), then
		\(d(ho,sho)=d(o,h^{-1}sho)\leq R_K\). For \(n\in\N_{\geq1}\), set
		\(A_n=\{\omega\in\Omega\mid H_n(\omega)^{-1}sH_n(\omega)\in K\}\).
		
		For almost every \(\omega\), we have \(H_n(\omega)o\to\xi(\omega)\)
		and \(s\xi(\omega)\neq\xi(\omega)\). Fix such a path. If
		\(\omega\in A_n\) for infinitely many \(n\), choose a subsequence
		\((n_j)\) along which
		\[
		d(H_{n_j}(\omega)o,sH_{n_j}(\omega)o)\leq R_K
		\quad\text{for every }j\in\N_{\geq1}.
		\]
		The two sequences converge to \(\xi(\omega)\) and \(s\xi(\omega)\),
		respectively. Since they remain at bounded distance in the hyperbolic
		space \(X\), their boundary limits coincide. This contradicts
		\(s\xi(\omega)\neq\xi(\omega)\).
		
		Thus \(\mathbf 1_{A_n}\to0\) almost surely. Dominated convergence and
		the fact that \(H_n\) has distribution \(\mu^{*n}\) give
		\[
		\mu^{*n}\bigl(\{h\in G\mid h^{-1}sh\in K\}\bigr)
		=\mathbf P(H_n^{-1}sH_n\in K)\longrightarrow0.
		\]
	\end{proof}
	
	\begin{corollary}\label{cor:visual-singularity}
		In the setting of Lemma~\ref{lem:geometric-CE}, assume that \(G\) has the
		Howe--Moore property and that the almost-sure limit map realizes the Poisson
		boundary as \((\partial X,\nu)\). Let \(\Lambda<G\) be a countable subgroup such that
		\[
		\nu(\Fix_{\partial X}(s))=0
		\quad\text{for every }s\in\Lambda\setminus\{e\}.
		\]
		Let \(\rE\) be a faithful \((G,\rL^\infty(\partial X))\)-structure on \(M\),
		and assume that the action \(G\curvearrowright M\) is ergodic.
		If \(\rE\) is not invariant, then \((\rE_s)_{s\in\Lambda}\) is pairwise singular.
	\end{corollary}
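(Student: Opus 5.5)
The plan is to combine Lemma~\ref{lem:geometric-CE} with Theorem~\ref{thm:PHM}, and then use the same translation trick as in the proof of Theorem~\ref{thm:product-reduction} to upgrade singularity of \(\rE\) and \(\rE_s\) to pairwise singularity.

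First, fix \(s\in\Lambda\setminus\{e\}\). By hypothesis \(\nu(\Fix_{\partial X}(s))=0\). The standing assumptions of Lemma~\ref{lem:geometric-CE} hold: \(H_no\to\xi\) almost surely with \(\xi_*\mathbf P=\nu\). That lemma therefore shows that \(s\) satisfies \(\mu\)-conjugacy escape.

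Second, we are told that the almost-sure limit map realizes \((\partial X,\nu)\) as the Poisson boundary of \((G,\mu)\). So \(\rE\) is a faithful normal \(G\)-equivariant ucp map into \(\rL^\infty\) of the Poisson boundary, and the action is ergodic. Since \(\mu\) is generating, as standing in this section, Theorem~\ref{thm:PHM} applies to \(s\). As \(\rE\) is not invariant, we get \(\rE\perp\rE_s\) for every \(s\in\Lambda\setminus\{e\}\).

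Third, let \(s\neq t\) in \(\Lambda\), and put \(u=st^{-1}\in\Lambda\setminus\{e\}\), so that \(\rE\perp\rE_u\). Precomposing both maps with \(\alpha_t\) gives \(\rE_t=\rE\circ\alpha_t\) and \(\rE_u\circ\alpha_t=\rE\circ\alpha_{st^{-1}}\circ\alpha_t=\rE_s\), using that \(\alpha\) is a homomorphism.

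Singularity is preserved under common precomposition by an automorphism \(\alpha_t\). To see this, write \(\rE\circ\alpha_t=t^{-1}\cdot\rE\) by equivariance. Then the state fields of \(\rE_t\) and \(\rE_s\) are obtained from those of \(\rE\) and \(\rE_u\) by the same transformation: the base point is moved by \(t\), and the states are composed with the isometry induced by \(\alpha_t\) on \(\cS(A)\). This transformation preserves norm distance 2, and since the \(G\)-action on \(\partial X\) is nonsingular it preserves \(\nu\)-null sets. This is the same step used in the proof of Theorem~\ref{thm:product-reduction}, with independence of the model \(A\) supplied by \cite[Proposition~4.10]{BBHP22}.

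Hence \(\rE_t\perp\rE_s\), so \((\rE_s)_{s\in\Lambda}\) is pairwise singular. The only point needing care is this last transport of singularity, and it is routine.
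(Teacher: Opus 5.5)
Your proposal is correct and follows the paper's proof: Lemma~\ref{lem:geometric-CE} gives \(\mu\)-conjugacy escape for each \(s\in\Lambda\setminus\{e\}\), Theorem~\ref{thm:PHM} then gives \(\rE\perp\rE_s\), and common precomposition by \(\alpha_t\) turns \(\rE\perp\rE_{st^{-1}}\) into \(\rE_t\perp\rE_s\). For the transport step you do not need to move the base point or invoke nonsingularity: the state fields of \(\rE_t\) and \(\rE_s\) are obtained from those of \(\rE\) and \(\rE_{st^{-1}}\) by composing pointwise with the isometry \(\varphi\mapsto\varphi\circ\alpha_t\) of \(\cS(A)\), which preserves norm distance \(2\) at every point \(b\).
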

	
	\begin{proof}
		Theorem~\ref{thm:PHM} and Lemma~\ref{lem:geometric-CE} give
		\(\rE\perp\rE_s\) for every \(s\in\Lambda\setminus\{e\}\). If
		\(r,t\in\Lambda\) and \(r\neq t\), then \(\rE\perp\rE_{tr^{-1}}\), and
		common precomposition by \(\alpha_r\) gives
		\(\rE_r\perp\rE_{tr^{-1}}\circ\alpha_r=\rE_t\).
	\end{proof}

	\section{Twin-building lattices}\label{sec:twin}
	In this section, we present twin-buildings and Kac--Moody groups, following \cite{CR09} and \cite{CR-RGD}. 
	
	\subsection{Groups with a twin root datum}\label{sec:twin-root-datum}

	Let $(W,S)$ be a Coxeter system of finite rank, and let	$\ell\colon W\to\N$ be its word-length function.  We identify the chambers of the Coxeter complex $\Sigma=\Sigma(W,S)$ with $W$, and denote by $c_0$ the chamber corresponding to $e\in W$. 
	A \emph{root} is a combinatorial half-apartment of $\Sigma$.  We write $\Phi$ for the set of roots and $-\alpha$ for the root complementary to $\alpha$.  The sets of positive and
	negative roots are $\Phi_+=\{\alpha\in\Phi\mid c_0\in\alpha\}$ and $ \Phi_-=-\Phi_+$. For $s\in S$, the wall of the $s$-panel of $c_0$ bounds a unique positive root denoted $\alpha_s$.  We write $\Pi=\{\alpha_s\mid s\in S\}$ for the set of simple roots.
	
	A pair $\{\alpha,\beta\}\subseteq\Phi$ is \emph{prenilpotent} if both
	$\alpha\cap\beta$ and $(-\alpha)\cap(-\beta)$ contain a chamber.  For such a
	pair, define
	\begin{align*}
		[\alpha,\beta]=\bigl\{\gamma\in\Phi\mid
		\alpha\cap\beta\subseteq\gamma \text{ and }(-\alpha)\cap(-\beta)\subseteq-\gamma\bigr\}
	\end{align*}
	and $]\alpha,\beta[=[\alpha,\beta]\setminus\{\alpha,\beta\}$. The interval $[\alpha,\beta]$ is finite.

	\begin{definition}
		\label{def:twin-root-datum}
		Consider a group $\Lambda$ together with a family of subgroups $(U_\alpha)_{\alpha\in\Phi}$.  Set $T=\bigcap_{\alpha\in\Phi}N_\Lambda(U_\alpha)$, $U_+=\langle U_\alpha\mid\alpha\in\Phi_+\rangle$ and $ U_-=\langle U_\alpha\mid\alpha\in\Phi_-\rangle$. 
		The collection $(\Lambda, (U_\alpha)_{\alpha \in \Phi}) $ forms a \emph{twin root datum of type $(W,S)$} if the following axioms are satisfied. 
		
		\begin{enumerate}[$(\rm i)$]
			\item $U_\alpha\neq\{e\}$ for every $\alpha\in\Phi$.
			
			\item For every prenilpotent pair $\{\alpha,\beta\}$, the commutator $[U_\alpha,U_\beta]$ is contained in the group $U_{(\alpha, \beta)}= \langle U_\gamma\mid\gamma\in]\alpha,\beta[\rangle$.
			
			\item For every $s\in S$ and every
			$u\in U_{\alpha_s}\setminus\{e\}$, there exist
			$u',u''\in U_{-\alpha_s}$ such that  $m(u)=u'uu''$ satisfies $ m(u)U_\beta m(u)^{-1}=U_{s(\beta)}$ for all $\beta \in \Phi$.
			
			\item For every $s\in S$, the group $U_{-\alpha_s}$ is not contained in $ U_+$ and $ U_{\alpha_s}$ is not contained in $ U_-$. 
			
			\item $\Lambda= T\Lambda^\dagger$, where $ \Lambda^\dagger= \langle U_\alpha\mid\alpha\in\Phi\rangle$. 
		\end{enumerate}
		The subgroups $U_\alpha$ are the \emph{root groups}.
	\end{definition}
	Define $N=T\langle m(u)\mid u\in U_{\alpha_s}\setminus\{e\},\ s\in S\rangle$, $B_+=TU_+$ and $B_-=TU_-$. Then $T$ is normal in $N$, and the map $ m(u)T\longmapsto s$ extends to an isomorphism $N/T\cong W$. Moreover, $(B_+,N)$ and $(B_-,N)$ are $BN$-pairs with Weyl group $(W,S)$, and together they form a saturated twin $BN$-pair \cite[\S0.4]{CR09}.  For each $w\in W$, choose a representative $\dot w\in N$. The $BN$-pairs give the Bruhat decompositions
	\begin{equation}
		\Lambda=\bigsqcup_{w\in W}B_+\dot wB_+
		=\bigsqcup_{w\in W}B_-\dot wB_-,
		\label{eq:rgd-Bruhat}
	\end{equation}
	and their twinning gives the Birkhoff decomposition
	\begin{equation}
		\Lambda=\bigsqcup_{w\in W}B_+\dot wB_-
		=\bigsqcup_{w\in W}B_-\dot wB_+.
		\label{eq:rgd-Birkhoff}
	\end{equation}
	See \cite[\S5.4]{CR-RGD}.  
	
	\subsection{Twin-buildings}\label{sec:twin-building}
	Let $(W,S)$ be a Coxeter system. We recall the definition of a building.
	\begin{definition}
		A building of type $(W,S)$ is a set $\mathcal{C}$ endowed with a \emph{$W$-valued distance}, that is, a map $\delta\colon\mathcal C\times\mathcal C\longrightarrow W$ satisfying, for all $c,d\in\mathcal C$, all $s\in S$, and all $w\in W$:
		\begin{enumerate}[$(\rm i)$]
			\item
			$\delta(c,d)=e$ if and only if $c=d$.
			
			\item
			If $\delta(c,d)=w$ and $\delta(d,d')=s$, then
			$\delta(c,d')\in\{w,ws\}$. Moreover,
			$\delta(c,d')=ws$ whenever $\ell(ws)=\ell(w)+1$.
			
			\item
			If $\delta(c,d)=w$, then there exists $d'\in\mathcal C$ such that
			$\delta(d,d')=s$ and $\delta(c,d')=ws$.
		\end{enumerate}
	\end{definition} 
	
	Its elements are called \emph{chambers}, and distinct chambers $c,d$ are $s$-adjacent precisely when $\delta(c,d)=s$.  The gallery distance is defined as
	\begin{equation}
		d_{\mathrm{gal}}(c,d)=\ell(\delta(c,d)),
		\label{eq:rgd-gallery-Weyl}
	\end{equation}
	and the type sequences of minimal galleries from $c$ to $d$ are exactly the
	reduced words representing $\delta(c,d)$.
	
	\begin{definition}
		\label{def:twin-building}
		Let $(\mathcal C_+,\delta_+)$ and $(\mathcal C_-,\delta_-)$ be buildings of type $(W,S)$.  A \emph{Weyl codistance}, or \emph{twinning}, is a
		map
		\[
		\delta^*\colon (\mathcal C_+\times\mathcal C_-) \sqcup (\mathcal C_-\times\mathcal C_+)
		\longrightarrow W
		\]
		satisfying the following axioms, and the analogous axioms with the two signs interchanged.  If $x\in\mathcal C_+$, $s\in S$ and $y,z\in\mathcal C_-$, then:
		\begin{enumerate}[$(\rm i)$]
			\item
			$\delta^*(x,y)=\delta^*(y,x)^{-1}$.
			
			\item
			If $\delta^*(x,y)=w$ and $\delta_-(y,z)=s$, then
			$\delta^*(x,z)\in\{w,ws\}$. Moreover,
			$\delta^*(x,z)=ws$ whenever $\ell(ws)=\ell(w)-1$.
			
			\item
			If $\delta^*(x,y)=w$, then there exists $z\in\mathcal C_-$ such
			that $\delta_-(y,z)=s$ and $\delta^*(x,z)=ws$.
		\end{enumerate}
		The triple $(\mathcal C_+,\mathcal C_-,\delta^*)$ is called a \emph{twin building}.  Chambers $x\in\mathcal C_+$ and $y\in\mathcal C_-$ are \emph{opposite} if $\delta^*(x,y)=e$. 
	\end{definition}

	Let $(\Lambda, (U_\alpha)_{\alpha \in \Phi}) $ be a twin root datum of type $(W,S)$, as in Section~\ref{sec:twin-root-datum}. To the BN-pairs $(B_+, N)$ and $(B_-,N)$, one can associate the buildings $X_+$ and $X_-$ respectively. For $\varepsilon\in\{+,-\}$, the set of chambers $\Ch(X_\varepsilon) $ of $X_\varepsilon$ is identified with $ \Lambda/B_\varepsilon$, and we denote its fundamental chamber by $c_\varepsilon = B_\varepsilon$.  The Weyl distance $\delta_\varepsilon$ on the set of chambers $\Ch(X_\varepsilon) $ is characterized by
	\begin{equation}
		\delta_\varepsilon(gB_\varepsilon,hB_\varepsilon)=w
		\quad\Longleftrightarrow\quad
		g^{-1}h\in B_\varepsilon\dot wB_\varepsilon.
		\label{eq:rgd-Weyl-cosets}
	\end{equation}
	The disjointness in \eqref{eq:rgd-Bruhat} shows that this is well-defined. 
	The standard apartments are $\mathcal A_\varepsilon=Nc_\varepsilon
	=\{nB_\varepsilon\mid n\in N\}$ and their simultaneous $\Lambda$-translates form the system of twin apartments.  Each $\mathcal A_\varepsilon$ is canonically a Coxeter complex of
	type $(W,S)$, and there is a natural twinning between $X_+$ and $X_-$ given by
	\begin{equation}
		\delta^*(gB_+,hB_-)=w
		\quad\Longleftrightarrow\quad
		g^{-1}h\in B_+\dot wB_-. 
		\label{eq:rgd-codistance-cosets}
	\end{equation}
	Throughout the paper, we keep this convention for the multiplication formula. Note that it is the inverse of the one used in \cite[\S5.6]{CR-RGD}, but this choice is harmless. The codistance $\delta^*$ is $\Lambda$-invariant. In particular, $B_\varepsilon$ is transitive on the chambers opposite $c_\varepsilon$.

	\subsection{Panels and rank-one subgroups}
	
	For $s\in S$ and $\varepsilon \in \{+,-\}$, define the standard minimal parabolic subgroups
	\begin{equation*}
		\mathsf P_s^\varepsilon
		=B_\varepsilon\sqcup B_\varepsilon\dot sB_\varepsilon.
	\end{equation*}
	The fundamental $s$-panels are $\mathcal P_s^\varepsilon=\mathsf P_s^\varepsilon/B_\varepsilon$, and their translates are all the $s$-panels of $X_\varepsilon$. The corresponding rank-one subgroup is $L_s=\langle U_{\alpha_s},U_{-\alpha_s}\rangle$. The group $L_s$ acts $2$-transitively on each fundamental $s$-panel.  On $\mathcal P_s^+$, the group $U_{\alpha_s}$ fixes $c_+$ and acts sharply transitively on $\mathcal P_s^+\setminus\{c_+\}$. On $\mathcal P_s^-$ the analogous statement holds with $U_{-\alpha_s}$ and $c_-$. 
	
	Assume that the root groups $U_\alpha$ are finite. By
	Definition~\ref{def:twin-root-datum}$(\rm iii)$, $U_{\alpha_s}$ and $U_{-\alpha_s}$ are conjugate, and we set 
	\begin{equation*}
		q_s=|U_{\alpha_s}|=|U_{-\alpha_s}|.
	\end{equation*}
	By sharp transitivity, $|\mathcal P_s^+|=|\mathcal P_s^-|=q_s+1$, so that $X_+$ and $X_-$ are locally finite and type-regular.

	\begin{lemma}
		\label{lem:rgd-rank-one-finite}
		If the root groups are finite, then every $L_s$ is finite.  Moreover,
		\[
		\Lambda^\dagger=\langle L_s\mid s\in S\rangle.
		\]
	\end{lemma}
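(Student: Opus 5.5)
To prove the lemma, I will treat its two assertions separately, relying on the twin root datum axioms in Definition~\ref{def:twin-root-datum} together with the panel description given above.

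\emph{Finiteness of \(L_s\).} The plan is to exhibit \(L_s\) as a subgroup of a finite symmetric group via a faithful action with finite kernel, or to bound its order directly. I will use the most direct route. By Definition~\ref{def:twin-root-datum}$(\rm iii)$, for \(u\in U_{\alpha_s}\setminus\{e\}\) the element \(m(u)=u'uu''\) lies in \(L_s\), and it satisfies \(m(u)U_{\alpha_s}m(u)^{-1}=U_{-\alpha_s}\). The standard rank-one structure theory of groups with a twin root datum (see \cite{CR-RGD}) gives the decomposition
\[
L_s=T_s U_{\alpha_s}\ \sqcup\ U_{-\alpha_s}m(u) T_s U_{\alpha_s},
\]
where \(T_s=L_s\cap T\). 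I intend to obtain this from the Bruhat decomposition \eqref{eq:rgd-Bruhat} intersected with \(\mathsf P_s^+\). Hence it suffices to show that \(T_s\) is finite. For this, note that \(L_s\) acts on the fundamental panel \(\mathcal P_s^+\), which is finite with \(q_s+1\) chambers. The kernel of this action fixes \(c_+\) and \(\dot s c_+\), so it lies in \(B_+\cap B_+^{\dot s}\cap L_s\). This set is contained in \(T U_{\alpha_s}\)-type pieces, and I would argue that it is in fact \(T_s\) acting trivially. Because of this, the panel action alone does not immediately bound \(T_s\).

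The cleaner alternative, which I will actually use, is the following. Every \(t\in T_s\) can be written as a product of elements \(m(u)m(u_0)^{-1}\) with \(u\in U_{\alpha_s}\setminus\{e\}\), where \(u_0\) is a fixed nontrivial element. This is the rank-one fact that \(T_s\) is generated by such products, and it follows from the decomposition above by comparing the big cells. Each \(m(u)\) belongs to the finite set \(U_{-\alpha_s}U_{\alpha_s}U_{-\alpha_s}\) of cardinality at most \(q_s^3\). As a result, \(T_s\) is a subgroup of \(L_s\) that normalizes \(U_{\pm\alpha_s}\), and I expect to show that it is contained in the finite set \(U_{-\alpha_s}U_{\alpha_s}U_{-\alpha_s}\cdot U_{-\alpha_s}U_{\alpha_s}U_{-\alpha_s}\). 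Pinning down this containment cleanly is the main obstacle. If it becomes delicate, I will fall back on the uniqueness of \(m(u)\) and on the fact that \(L_s\) is generated by the two finite normal-by-\(T_s\) pieces, citing \cite[\S7.1]{CR-RGD} for the rank-one structure.

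\emph{Generation.} Clearly \(\langle L_s\rangle\subseteq\Lambda^\dagger\). For the reverse inclusion, I take an arbitrary root \(\beta\). Since \(W\) acts transitively on the \(W\)-orbits of simple roots and every root is \(w\alpha_s\) for some \(w\in W\) and \(s\in S\), I write \(w=s_1\cdots s_k\). Using axiom $(\rm iii)$ repeatedly with the elements \(m_i=m(u_i)\in L_{s_i}\), I obtain
\[
U_\beta=U_{w\alpha_s}=m_1\cdots m_k\,U_{\alpha_s}\,(m_1\cdots m_k)^{-1},
\]
and both the conjugating element and \(U_{\alpha_s}\) lie in \(\langle L_t\mid t\in S\rangle\). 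Therefore every root group lies in \(\langle L_s\mid s\in S\rangle\), which proves equality with \(\Lambda^\dagger\).
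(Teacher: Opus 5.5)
Your generation argument is the same as the paper's, but the finiteness part has a genuine gap. You reduce everything to the finiteness of $T_s$ and then never prove it. You yourself flag the needed containment as ``the main obstacle'', and the fallback does not close it. Knowing that $T_s$ is generated by the finitely many elements $m(u)m(u_0)^{-1}$, each lying in the finite set $U_{-\alpha_s}U_{\alpha_s}U_{-\alpha_s}$, gives no bound on $|T_s|$, since a finitely generated group can be infinite. There is also no reason why arbitrary products of such elements should lie in $(U_{-\alpha_s}U_{\alpha_s}U_{-\alpha_s})^2$. Citing \cite{CR-RGD} for the ``rank-one structure'' does not supply this step. Your decomposition of $L_s$ also presupposes $L_s\cap B_+=T_sU_{\alpha_s}$, which would itself need justification.

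The difficulty you correctly isolate is that the panel action does not see its own kernel. The missing idea is that this kernel is central in $L_s$; indeed it equals $Z(L_s)$, as in the proof of \cite[Lemma~10]{CR09}. Here is why, granting your claim that the kernel lies in $T_s$: it then normalizes $U_{\pm\alpha_s}$. So for $u\in U_{\pm\alpha_s}$, the commutator of $u$ with a kernel element lies in $U_{\pm\alpha_s}$ and acts trivially on the panel, hence is trivial by sharp transitivity. Consequently $L_s/Z(L_s)$ is a quotient of a subgroup of $\operatorname{Sym}(\mathcal P_s^+)$, hence finite, and Schur's theorem gives $|[L_s,L_s]|<\infty$. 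The abelianization of $L_s$ is generated by the commuting images of $U_{\alpha_s}$ and $U_{-\alpha_s}$, so it has order at most $q_s^2$. Therefore $|L_s|\leq q_s^2\,|[L_s,L_s]|<\infty$. This is the paper's argument. It needs neither the Bruhat-type decomposition of $L_s$ nor any explicit information about $T_s$, because the central part is controlled by Schur's theorem rather than by generators.
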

	
	\begin{proof}
		The kernel of the action of $L_s$ on its rank-one building is $Z(L_s)$,
		as shown in the proof of \cite[Lemma~10]{CR09}. Since the panel has $q_s+1$
		points, $L_s/Z(L_s)$ is finite. Schur's theorem therefore implies that
		$[L_s,L_s]$ is finite.  The abelianization of $L_s$ is generated by the
		images of the two finite groups $U_{\alpha_s}$ and $U_{-\alpha_s}$, hence
		is finite as well. It follows that $L_s$ is finite.
		
		By Definition~\ref{def:twin-root-datum}$(\rm iii)$, the elements $m(u)\in L_s$ lift the simple
		reflections and conjugate simple root groups to all their $W$-translates.
		Since every root is a $W$-translate of a simple root, the groups $L_s$
		generate every $U_\alpha$, and hence generate $\Lambda^\dagger$.
	\end{proof}
	
	For a reduced expression $w=s_1\cdots s_n$, put $q_w=q_{s_1}\cdots q_{s_n}$. This number is independent of the chosen reduced expression, and is the cardinality of the $w$-sphere $\{d\in\Ch(X_\varepsilon)\mid\delta_\varepsilon(c_\varepsilon,d)=w\}$ about $c_\varepsilon$. 
	
	The following codistance rule will be useful later when we relate the twin-building geometry and the Markov chain associated to a random walk. 
	
	\begin{lemma}
		\label{lem:rgd-panel-codistance}
		Fix $c_-\in\Ch(X_-)$, let $\mathcal P\subseteq\Ch(X_+)$ be an $s$-panel, and let $d\in\mathcal P$.  Put $w=\delta^*(d,c_-)$.  Then
		\begin{enumerate}[$(\rm i)$]
			\item If $\ell(sw)=\ell(w)+1$, exactly one chamber of $\mathcal P$ has
			codistance $sw$ from $c_-$, and the other $q_s$ chambers have
			codistance $w$.
			\item If $\ell(sw)=\ell(w)-1$, the chamber $d$ is the unique chamber of
			$\mathcal P$ having codistance $w$ from $c_-$, and every chamber of
			$\mathcal P\setminus\{d\}$ has codistance $sw$.
		\end{enumerate}
		A similar statement holds for an $s$-panel in the building $X_-$, but with $ws$ in place of $sw$.
	\end{lemma}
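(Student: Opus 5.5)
The plan is to reduce to the case where the panel is the image of a standard configuration and then apply the twin building axioms of Definition~\ref{def:twin-building}. These are axiom $(\rm ii)$, which constrains codistances along adjacency, and axiom $(\rm iii)$, which provides existence, used with the signs interchanged and together with $\delta^*(x,y)=\delta^*(y,x)^{-1}$. Since the statement concerns $\delta^*(\cdot,c_-)$ for chambers of $X_+$ moving in an $s$-panel, I will work with $\delta^*(c_-,\cdot)$, which takes values in $W$ with $\delta^*(c_-,x)=\delta^*(x,c_-)^{-1}$. Moving $x$ inside an $s$-panel of $X_+$ multiplies $\delta^*(c_-,x)=w^{-1}$ on the right by $s$, and inverting turns $w^{-1}s$ into $sw$. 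Note that $\ell(w^{-1}s)=\ell(sw)$. This is why $sw$ appears in the $X_+$ case and $ws$ appears in the $X_-$ case.

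\textbf{Case $\ell(sw)=\ell(w)-1$, that is, $\ell(w^{-1}s)=\ell(w^{-1})-1$.} For every $e'\in\mathcal P\setminus\{d\}$, we have $\delta_+(d,e')=s$. Axiom $(\rm ii)$ (swapped signs) forces $\delta^*(c_-,e')=w^{-1}s$, so $\delta^*(e',c_-)=sw$. This gives $(\rm ii)$, including the uniqueness of $d$ as the only chamber of $\mathcal P$ at codistance $w$.

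\textbf{Case $\ell(sw)=\ell(w)+1$.} Axiom $(\rm iii)$ provides some $e\in\mathcal P$, $s$-adjacent to $d$, with $\delta^*(e,c_-)=sw$. Now apply the previous case with $e$ in place of $d$ and $w'=sw$. Then $\ell(sw')=\ell(w)=\ell(w')-1$, so every chamber of $\mathcal P\setminus\{e\}$ has codistance $sw'=w$. Hence exactly one chamber, namely $e$, has codistance $sw$, and the remaining $|\mathcal P|-1=q_s$ chambers have codistance $w$. Here I use that all $s$-panels have $q_s+1$ chambers, by type-regularity and the fact that every panel is a $\Lambda$-translate of $\mathcal P_s^+$. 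If $d$ itself is the chamber $e$, the argument is the same after relabelling.

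The statement for $X_-$ follows identically, now using the axioms directly in the form stated, without the inversion. There, moving $y$ in an $s$-panel changes $\delta^*(c_+,y)=w$ to $ws$.

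The only delicate step is tracking the left versus right multiplication under inversion, including the inversion of the length condition. There is no real obstacle beyond that bookkeeping.
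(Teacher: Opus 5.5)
Your proof is correct. It differs from the paper's mainly in where the uniqueness comes from.

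The paper quotes \cite[Lemma~5.149]{AB08}: there is a unique chamber of $\mathcal P$ maximizing $\ell(\delta^*(\cdot,c_-))$, namely the projection of $c_-$ onto the panel. It identifies that maximal value as $sw$ or $w$ according to the two cases, and then fills in the remaining codistances with axioms $(\rm ii)$--$(\rm iii)$.

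You instead obtain case $(\rm ii)$ directly from the forcing clause of axiom $(\rm ii)$. You apply it with the signs interchanged to $\delta^*(c_-,\cdot)=\delta^*(\cdot,c_-)^{-1}$, where the condition $\ell(w^{-1}s)=\ell(w^{-1})-1$ is exactly $\ell(sw)=\ell(w)-1$. You then reduce case $(\rm i)$ to case $(\rm ii)$: axiom $(\rm iii)$ produces a chamber $e\in\mathcal P$ with codistance $sw$, and you re-base the argument at $e$.

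What your route buys is self-containment. It re-proves the panel case of the projection lemma from the axioms alone. It also makes explicit the inversion bookkeeping that the paper leaves implicit: the paper's axioms move the second argument of $\delta^*$, while the lemma moves the first. What the paper's route buys is brevity, given the citation, and it places the statement in the familiar language of projections.

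One small remark: your closing caveat about the case where $d$ itself is the chamber $e$ is vacuous. Indeed $\delta_+(d,e)=s$ already forces $e\neq d$, as does the fact that their codistances $w$ and $sw$ differ.
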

	
	\begin{proof}
		First, recall that $|\mathcal P|=q_s+1$. By \cite[Lemma~5.149]{AB08}, there exists a unique chamber in $\mathcal P$ realizing the maximal value of $\{\ell(\delta^*(d',c_-)) \mid d' \in \mathcal P\}$. If $\ell(sw)=\ell(w)+1$, this maximal value is attained for the codistance $sw$. By Definition~\ref{def:twin-building}$(\rm ii)$--$(\rm iii)$, $\delta^*(d',c_-) = w$ for all other chambers $d'\in\mathcal P$. If $\ell(sw)=\ell(w)-1$, this maximal value is attained for the codistance $w$ and therefore $d$ is the unique chamber of
		$\mathcal P$ having codistance $w$ from $c_-$. Again, by Definition~\ref{def:twin-building}$(\rm ii)$--$(\rm iii)$, every chamber of $\mathcal P\setminus\{d\}$ has codistance $sw$. 
	\end{proof}
	
	\subsection{Topological completions}\label{sec:top-completions}
	For the rest of this section (and in fact for the rest of the paper), we let $(W,S)$ be a Coxeter system and $(\Lambda, (U_\alpha)_{\alpha \in \Phi}) $ be a twin root datum. We assume that all root groups are finite and that $W$ is infinite, and we let $(X_+, X_-)$ be the associated twin-building. We keep the notations introduced before on $\Lambda^\dagger$, $T$ and $B_\pm$. For $\varepsilon \in \{+,-\}$, the group $\Aut(X_\varepsilon)$ is endowed with the compact-open topology, which makes it a locally compact second countable and totally disconnected group. The kernel of the action of $\Lambda$ on $X_\varepsilon $ is the centralizer $Z_\Lambda(\Lambda^\dagger) $ and is contained in $T$. In particular the root groups map injectively to $\Aut(X_\pm)$. We define $\Gamma = \Lambda / Z_\Lambda(\Lambda^\dagger)$. 
	
	Following \cite[\S1.2]{CR09} or \cite[\S6.1]{CR-RGD}, let
	$G_\varepsilon\leq\Aut(X_\varepsilon)$ be the closure of the image of
	$\Gamma$, and denote the resulting injective dense homomorphism by
	$\pi_\varepsilon\colon\Gamma\to G_\varepsilon$
	\cite[Proposition~1]{CR09}.  The subgroup
	$K_\varepsilon=\Stab_{G_\varepsilon}(c_\varepsilon)$ is compact open and is
	the closure of the image of $B_\varepsilon$.  The completion inherits a
	$BN$-pair with Borel subgroup $K_\varepsilon$, and its associated building is
	$X_\varepsilon$. Let $q_{\min}=\min_{s\in S}q_s$  and $W(t)=\sum_{w\in W}t^{\ell(w)}$.

	\begin{proposition}
		\label{prop:rgd-twin-lattice}
		Assume that $Z_\Lambda(\Lambda^\dagger)$ is finite and that
		$W(1/q_{\min})<\infty$. Then the diagonal embedding identifies $\Gamma$
		with an irreducible lattice in $G_+\times G_-$. 
	\end{proposition}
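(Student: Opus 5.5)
This is the classical Rémy / Carbone--Garland lattice criterion, and I would follow the standard covolume computation as in \cite{CR09}. The plan has three parts: discreteness, finite covolume via the opposition structure, and irreducibility.

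\emph{Discreteness.} The set $U=K_+\times K_-$ is compact open in $G_+\times G_-$. An element $\gamma\in\Gamma\cap U$ fixes $c_+$ and $c_-$, so it lies in the image of $B_+\cap B_-$. For a twin root datum with finite root groups, $B_+\cap B_-=T$ by the twin building axioms, since $U_+\cap B_-=\{e\}$. Moreover $T$ acts on the twin apartment fixing it chamberwise. The image of $T$ in $\Gamma$ fixes the whole standard twin apartment and normalizes every root group. Since the root groups are finite, this image is finite: $T/Z_\Lambda(\Lambda^\dagger)$ embeds into $\prod_{s\in S}\Aut(L_s)$, and by Lemma~\ref{lem:rgd-rank-one-finite} the $L_s$ are finite and generate $\Lambda^\dagger$. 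Hence $\Gamma\cap U$ is finite and $\Gamma$ is discrete.

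\emph{Finite covolume.} I would produce a fundamental domain of the form $\bigsqcup_{w\in W}(\text{pieces})$ using the Birkhoff decomposition \eqref{eq:rgd-Birkhoff}. Consider the action of $\Gamma$ on $G_+/K_+\times G_-/K_-=\Ch(X_+)\times\Ch(X_-)$, which is discrete. The covolume of $\Gamma$ equals
\[
\sum_{[(c,d)]}\frac{1}{|\Stab_\Gamma(c,d)|}\cdot\mathrm{vol}(K_+\times K_-),
\]
summed over $\Gamma$-orbits of pairs of chambers. By strong transitivity and \eqref{eq:rgd-codistance-cosets}, $\Gamma$-orbits of pairs correspond exactly to codistances $w\in W$, with representative $(c_+,\dot w c_-)$. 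The stabilizer of this pair is the image of $B_+\cap \dot wB_-\dot w^{-1}$, which contains the image of $U_+\cap \dot wU_-\dot w^{-1}$. That group is the product of the root groups $U_\alpha$ over the $\ell(w)$ roots $\alpha\in\Phi_+\cap w\Phi_-$, so it has order $q_w\geq q_{\min}^{\ell(w)}$. The covolume is therefore bounded by a constant times $\sum_w q_{\min}^{-\ell(w)}=W(1/q_{\min})<\infty$.

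\emph{Main obstacle.} I expect the hardest step to be justifying the stabilizer order rigorously, namely that $\Stab_\Gamma(c_+,\dot wc_-)$ has size at least $q_w$ up to the finite factor $|T/Z|$. I would do this by computing the orbit of $\dot wc_-$ under $\Stab_\Gamma(c_+)$, the image of $B_+$. This orbit is the set of chambers at codistance $w$ from $c_+$. Comparing it with the $K_+$-orbit, the Haar-measure count gives the volume contribution
\[
\frac{m(K_-)}{|\text{image of }B_+\cap\dot wB_-\dot w^{-1}|},
\]
and the root-group factorization handles the denominator. Alternatively, I would cite \cite[Proposition~1]{CR09} for the count.

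\emph{Irreducibility.} This follows from density of the projections $\pi_\pm$, which holds by definition of $G_\pm$, together with injectivity of $\pi_\pm$ ($\Gamma$ acts faithfully on each half). Since $W$ is infinite, $G_\pm$ is non-discrete, so $\Gamma$ cannot be commensurable with a product lattice.
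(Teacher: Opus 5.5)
Your argument is correct, but it takes a different route from the paper. The paper computes nothing. It invokes \cite[Proposition~2]{CR09} to get that the diagonal image is a lattice, passes to the quotient by the finite kernel $Z_\Lambda(\Lambda^\dagger)$, and reads off irreducibility from density of the projections; your last paragraph matches that step, with a harmless extra commensurability remark. You instead reprove the cited result by the standard Rémy/Carbone--Garland count. Discreteness comes from $B_+\cap B_-=T$ and the finiteness of $T/Z_\Lambda(\Lambda^\dagger)\hookrightarrow\prod_s\Aut(L_s)$. Finite covolume comes from the Birkhoff decomposition, which indexes the $\Gamma$-orbits on $\Ch(X_+)\times\Ch(X_-)$ by $W$, together with the bound $|\Stab_\Gamma(c_+,\dot wc_-)|\geq|U_+\cap\dot wU_-\dot w^{-1}|=q_w$.

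This gives a self-contained proof and in fact shows slightly more. Because you work directly with the effective quotient, you never use finiteness of $Z_\Lambda(\Lambda^\dagger)$; in the paper that hypothesis enters only through the cited result and the passage from $\Lambda$ to $\Gamma$.

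The cost is that you rely on standard facts you should state explicitly:
\begin{enumerate}[$(\rm i)$]
\item The formula $\sum_{(c,d)}m(K_+\times K_-)/|\Stab_\Gamma(c,d)|$ computes an invariant measure on $\Gamma\backslash(G_+\times G_-)$ only if $G_\pm$ are unimodular. This holds here because, for $w=\delta_\varepsilon(c_\varepsilon,gc_\varepsilon)$, one has $[K_\varepsilon:K_\varepsilon\cap gK_\varepsilon g^{-1}]=q_w=q_{w^{-1}}=[gK_\varepsilon g^{-1}:K_\varepsilon\cap gK_\varepsilon g^{-1}]$.
\item The group $U_+\cap\dot wU_-\dot w^{-1}$ has a unique product decomposition along the $\ell(w)$ roots of $\Phi_+\cap w\Phi_-$.
\item This group injects into $\Gamma$, since $Z_\Lambda(\Lambda^\dagger)\leq T$ and $T\cap U_+=\{e\}$.
\end{enumerate}
Finally, the fallback reference for the count should be \cite[Proposition~2]{CR09}, not Proposition~1.
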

	\begin{proof}
		By \cite[Proposition~2]{CR09}, the diagonal image is a lattice. Passing to
		the effective quotient by the finite kernel
		$Z_\Lambda(\Lambda^\dagger)$ identifies that image with $\Gamma$. Its two
		coordinate projections are dense by the definition of $G_\pm$, so the
		lattice is irreducible.
	\end{proof}
	We call \emph{twin building lattices} the groups satisfying the assumptions of Proposition~\ref{prop:rgd-twin-lattice}. In fact these lattices are non-uniform. Indeed, $K_+\times K_-$ is compact open and $(G_+ \times G_-)/(K_+ \times K_-)$ is the discrete set of pairs of chambers.  If $\Gamma\backslash (G_+ \times G_-)$ were compact, its image
	$\Gamma\backslash (G_+ \times G_-)/(K_+ \times K_-)$ would be both compact and discrete, hence finite. On the other hand, the Birkhoff decomposition identifies this double quotient
	with the infinitely many $\Gamma$-orbits indexed by $W$, a contradiction.

	\subsection{Split Kac--Moody groups}
	
	Let \(A=(a_{st})_{s,t\in S}\) be an indecomposable generalized Cartan matrix and let \((W,S)\) be the associated crystallographic Coxeter system. We assume that \((W,S)\) is of \emph{compact-hyperbolic type}: \(W\) is an irreducible cocompact reflection group of hyperbolic space, and the standard parabolic subgroup \(W_J<W\) is finite for every proper subset \(J\subsetneq S\). Its rank belongs to \(\{3,4,5\}\). 
	
	Let \(\Lambda=\mathscr G_A(\F_q)\) be the associated minimal simply connected
	split Kac--Moody group. Then $\Lambda$ admits a twin root datum with
	$U_\alpha\cong(\F_q,+)$ for every root $\alpha$. Thus $q_s=q$ and every
	panel has $q+1$ chambers. As before, we denote by \((X_+,X_-)\) its twin
	building and by \(G_\varepsilon\) the closure of the image of $\Lambda$ in
	$\Aut(X_\varepsilon)$.
	These twin building lattices will be our main object of study for the rest of the paper. We collect some results. 
	\begin{proposition}\label{prop:KM-inputs}
		Assume that \(q\geq11\), or that \(W\) is \(3\)-spherical and \(q\geq4\). Then:
		\begin{enumerate}[$(\rm i)$]
			\item \(\Gamma\) is an irreducible lattice with dense injective projections in \(G_+\times G_-\).
			\item \(\Gamma\) is infinite, simple and finitely presented.
			\item \(G_\pm\) are second countable, noncompact, nondiscrete, topologically simple totally disconnected locally compact groups acting strongly transitively and geometrically on \(X_\pm\).
			\item \(G_\pm\) have property~{\rm (T)} and the Howe--Moore property, and \(\Gamma\) has property~{\rm (T)}.
		\end{enumerate}
	\end{proposition}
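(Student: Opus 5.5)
This proposition mostly assembles results from the literature, and I would prove it item by item, checking the hypotheses of each cited theorem for $\Lambda=\mathscr G_A(\F_q)$.

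\emph{Item $(\rm i)$.} I would apply Proposition~\ref{prop:rgd-twin-lattice}. Two hypotheses need checking.
\begin{enumerate}[$(\rm a)$]
\item The kernel $Z_\Lambda(\Lambda^\dagger)$ is finite. For the minimal simply connected split Kac--Moody group, $\Lambda=\Lambda^\dagger T$ with $T\cong(\F_q^\times)^{|S|}$ finite, and the kernel of the action on $X_\pm$ is contained in $T$, hence finite. In fact it equals $Z(\Lambda)$, so $\Gamma=\Lambda/Z(\Lambda)$ agrees with the $\Gamma$ of Section~\ref{sec:top-completions}.
\item The bound $W(1/q)<\infty$. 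The Poincaré series of a cocompact hyperbolic reflection group has radius of convergence $1/\rho$, where $\rho$ is the growth rate of $W$. In ranks $3$--$5$ this growth rate is small, and for $q\ge4$ the series converges; this is exactly the range covered in \cite{CR09}. I would cite \cite[\S1]{CR09} rather than estimate $\rho$ by hand.
\end{enumerate}
Injectivity and density of $\pi_\pm$ are built into the construction (\cite[Proposition~1]{CR09}), and irreducibility follows from density.

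\emph{Item $(\rm ii)$.} Finite presentation holds for all these groups because $W$ is $2$-spherical in compact-hyperbolic type; this is Abramenko--Mühlherr via the Curtis--Tits presentation, valid for $q\ge4$. Infiniteness is clear, since $W$ is infinite and the $\Gamma$-orbits of pairs of chambers are indexed by $W$ through the Birkhoff decomposition~\eqref{eq:rgd-Birkhoff}. Simplicity is the Caprace--Rémy theorem \cite{CR09}. Its hypothesis is that the group is irreducible, non-affine, of $2$-spherical type, that $q$ is large compared with the rank, and that the lattice has dense projections with Howe--Moore-type properties. I expect this to be the main obstacle: one must verify that the numerical condition in \cite{CR09} ($q\ge n>|S|$ together with the bound $W(1/q)<\infty$ and the $2$-spherical hypothesis) holds throughout the stated range $q\ge11$, or $W$ $3$-spherical with $q\ge4$. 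This range is presumably chosen precisely so that \cite{CR09} applies alongside \cite{CT26}, so I would check it case by case for ranks $3,4,5$.

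\emph{Item $(\rm iii)$.} Second countability and total disconnectedness are inherited from $\Aut(X_\pm)$. Strong transitivity comes from the $BN$-pair of the completion with Borel subgroup $K_\pm$. Since $K_\pm$ is compact open, $G_\pm$ acts properly, and it acts cocompactly because it is chamber-transitive. Hence the action on the Davis realization, which is a proper CAT($-1$) space when $W$ is hyperbolic, is geometric. Noncompactness follows from $W$ being infinite. Nondiscreteness follows because $K_\pm$ is an infinite profinite group, as it contains $U_\pm$, which is infinite. Topological simplicity follows from \cite{CR09}, or from Rémy's or Marquis's results in the non-affine irreducible case.

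\emph{Item $(\rm iv)$.} The Howe--Moore property of $G_\pm$ and property~(T) of $\Gamma$ are \cite{CT26}, in particular \cite[Proposition~4.1]{CT26}, under exactly these hypotheses. Property~(T) then passes from $\Gamma$ to the product $G_+\times G_-$, since a lattice has (T) if and only if its ambient group does. It therefore passes to each factor $G_\pm$, because (T) is inherited by quotients.
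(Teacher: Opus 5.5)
Your item-by-item assembly is essentially the paper's. It uses Proposition~\ref{prop:rgd-twin-lattice} with $Z_\Lambda(\Lambda^\dagger)=Z(\Lambda)\subseteq T$ finite, \cite{AM97} for finite presentation, \cite{CR09} for the completions, and \cite{CT26} for Howe--Moore and property~(T). Your transfer of (T) from $\Gamma$ to $G_+\times G_-$ and then to the factors is valid; the paper simply goes the other way.

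The step that does not close is simplicity. You leave it open, and the hypotheses you guess for the Caprace--R\'emy theorem are wrong in a way that matters.
\begin{enumerate}[$(\rm a)$]
\item The Howe--Moore property is not an input there; for these completions it is the recent result of \cite{CT26}.
\item A rank-dependent bound of the kind you wrote, $q\ge n>|S|$, already fails for $|S|=5$, $q=4$. That case lies inside the stated range: compact-hyperbolic types of rank $4$ and $5$ are automatically $3$-spherical, since every proper standard parabolic subgroup is finite. So your planned case-by-case check could not succeed.
\end{enumerate}
The missing point is that the only numerical input the paper uses for simplicity is $q>3$. By \cite[Theorem~20]{CR09} and the discussion after it, this gives $\Lambda=\Lambda^\dagger$ and simplicity of $\Gamma=\Lambda/Z(\Lambda)$. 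The lattice condition $W(1/q)<\infty$ is supplied separately by \cite[Lemma~5.1]{CT26}, not by comparing $q$ with $|S|$.

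For the same reason, in $(\rm i)$ you should cite an actual growth estimate such as \cite[Lemma~5.1]{CT26} rather than ``\cite[\S1]{CR09}''. The fact $\Lambda=\Lambda^\dagger$ also matters for $(\rm iii)$. The paper combines it with (FPRS) to get density of the commutator subgroup, and hence topological simplicity of $G_\pm$ via \cite[Lemma~9 and Proposition~11]{CR09}. Your vaguer citation should be routed the same way.
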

	
	\begin{proof}
		Since \(q>3\), \cite[Theorem~20]{CR09} and the discussion that follows show
		that $\Lambda=\Lambda^\dagger$ and that $\Gamma=\Lambda/Z(\Lambda)$ is
		simple. In particular, $Z_\Lambda(\Lambda^\dagger)=Z(\Lambda)$ is finite,
		because it is contained in the finite split torus $T$. Together with
		\cite[Lemma~5.1]{CT26}, which gives $W(1/q)<\infty$, Proposition~\ref{prop:rgd-twin-lattice}
		proves $(\rm i)$. The group is finitely presented by \cite{AM97}. See also the
		proof of \cite[Corollary~21]{CR09}. To prove $(\rm iii)$, observe first that the
		root groups are abelian. Split Kac--Moody root data satisfy property
		\textup{(FPRS)} by \cite[Proposition~4]{CR09}. Since
		$\Lambda=\Lambda^\dagger$ and $W$ is infinite and irreducible,
		\cite[Lemma~9]{CR09} shows that the commutator subgroup is dense in each
		completion. Thus \cite[Proposition~11]{CR09}, applied to the effective
		completions, gives their topological simplicity. Strong transitivity follows
		from the completed $BN$-pairs, while local finiteness and chamber
		transitivity make the actions proper and cocompact. The remaining
		topological assertions follow from \cite[Proposition~1]{CR09}. Finally,
		the results \cite[Proposition~4.1 and Corollary~D]{CT26} give $(\rm iv)$ (recall
		that property~{\rm (T)} passes to lattices). 
	\end{proof}
	
	\section{Essential freeness at infinity}\label{sec:essential-freeness}
	
	In this section we prove harmonic essential freeness for the root-generated
	compact-hyperbolic twin-building lattices satisfying the assumptions below.
	The split Kac--Moody groups of Proposition~\ref{prop:KM-inputs} are the main examples.
	
	Let $(\Lambda, (U_\alpha)_{\alpha \in \Phi})$ be a twin root datum with Coxeter system $(W, S)$. We denote by $(X_+, X_-)$ the associated twin building. We keep the notations introduced in Section~\ref{sec:twin}. We denote the two Weyl-distance maps by $\delta_\pm$, the twinning codistance by $\delta^*$ and the length function on $W$ by $\ell$. For the rest of this section, we make the following assumptions: 
	\begin{itemize}
		\item $(W,S)$ is of compact hyperbolic type, and is non-elementary.
		\item The root groups $(U_\alpha)$ are finite, $Z_\Lambda(\Lambda^\dagger)$ is finite and $W(1/q_{\min}) < \infty$.
		\item $\Lambda=\Lambda^\dagger Z_\Lambda(\Lambda^\dagger)$, or equivalently the effective group $\Gamma$ is generated by the images of the rank-one groups $L_s$.
	\end{itemize}
	In particular, by Proposition~\ref{prop:rgd-twin-lattice}, the effective quotient $\Gamma = \Lambda/ Z_\Lambda(\Lambda^\dagger)$ embeds as a non-uniform lattice in $G_+ \times G_-$, where $G_+$ and $G_-$ are the topological completions. For $\varepsilon \in \{+,-\}$, the projection $\pi_\varepsilon \colon \Gamma \to G_\varepsilon $ is injective and dense. Note that these conditions hold for the split Kac--Moody groups of compact hyperbolic type from Proposition~\ref{prop:KM-inputs}.

	We fix opposite fundamental chambers $c_+\in\Ch(X_+)$ and $c_-\in\Ch(X_-)$ and write $B_\varepsilon=\Stab_\Gamma(c_\varepsilon)$ and $K_\varepsilon=\Stab_{G_\varepsilon}(c_\varepsilon)$. The group $K_\varepsilon$ is compact open and $\pi_\varepsilon(B_\varepsilon)\leq K_\varepsilon$. 
	We let $o_\varepsilon$ be a barycenter of $c_\varepsilon$. Since the
	actions are type-preserving, $o_\varepsilon$ is fixed by $K_\varepsilon$.
	
	We use the same notation $L_s$ as before for the image in $\Gamma$ of the rank-one group $\langle U_{\alpha_s},U_{-\alpha_s}\rangle$. These groups are finite by Lemma~\ref{lem:rgd-rank-one-finite}, act transitively on both fundamental $s$-panels, and generate $\Gamma$. 
	
	\subsection{Gromov-hyperbolic geometry}
	
	Each building $X_\varepsilon$ admits a locally compact $\mathrm{CAT}(-1)$ realisation associated with the compact-hyperbolic Coxeter system $(W,S)$. The apartments are isometric to hyperbolic space. Since $W$ is non-elementary, it admits two independent loxodromic isometries, and the actions $\Gamma\curvearrowright X_\varepsilon$ are
	then non-elementary.
	
	We gather some basic properties of these realizations, see for instance \cite[\S3.4]{Re09} and \cite[Chapter~II]{BH99}. 
	
	\begin{lemma}
		\label{lem:ef-realization-geometry}
		Let $\varepsilon \in \{+,-\}$. Then:
		\begin{enumerate}[$(\rm i)$]
			\item $X_\varepsilon$ is proper and every nonconstant geodesic segment extends
			to a complete geodesic line.
			\item $\partial X_\varepsilon$ is compact and metrizable.
			\item An isometry fixing $\partial X_\varepsilon$ pointwise is the identity.
		\end{enumerate}
		The topological completion $G_\varepsilon$ is a closed subgroup of $\Isom(X_\varepsilon)$ with the compact-open topology.
	\end{lemma}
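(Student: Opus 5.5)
The plan is to work throughout in the Davis--Moussong realization of \(X_\varepsilon\). This is the metric realization of \cite[\S3.4]{Re09}, in which each chamber is a copy of the compact hyperbolic Coxeter polyhedron of \((W,S)\), so each apartment is isometric to \(\mathbb H^n\) tiled by \(W\). Because the proper parabolics \(W_J\) are finite, the realization is a \(\mathrm{CAT}(-1)\) polyhedral complex with finitely many shapes of cells.

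For $(\rm i)$, properness comes from local finiteness: every panel has \(q_s+1<\infty\) chambers, and there are finitely many isometry types of cells, so closed balls are finite unions of compact cells. Completeness then follows from Bridson's theorem \cite[Chapter~I.7]{BH99}, and Hopf--Rinow gives properness. For geodesic extension I would use the criterion of \cite[Proposition~II.5.10]{BH99}: a proper \(\mathrm{CAT}(\kappa)\) space has the geodesic extension property as soon as no point has contractible punctured neighbourhoods, that is, every link has nontrivial homology. Here each point lies in an apartment isometric to \(\mathbb H^n\), so its link contains a round sphere \(S^{n-1}\). The link is itself a spherical building-like complex containing that sphere as an apartment, and it retracts onto the sphere. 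The sphere is therefore a homology class that is not killed, and the link is not contractible. Alternatively, extend the segment inside an apartment containing it, using the building axiom that any two points, hence any segment, lie in a common apartment. Extending only to a geodesic ray within that apartment is immediate; a complete line follows by doing this at both ends.

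For $(\rm ii)$, a proper \(\mathrm{CAT}(-1)\) space has visual boundary compact and metrizable for the cone topology. One can use the visual metric, or note that \(\overline{X}\) is compact metrizable because \(X\) is proper and separable \cite[II.8]{BH99}.

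For $(\rm iii)$, take an isometry \(g\) fixing \(\partial X_\varepsilon\) pointwise and a point \(x\). By $(\rm i)$, \(x\) lies on bi-infinite geodesics, and the distinct endpoint pairs \((\xi,\eta)\) of those geodesics are fixed by \(g\). In a \(\mathrm{CAT}(-1)\) space the geodesic joining two distinct boundary points is unique, so \(g\) preserves every such line. Next, choose two such lines through \(x\) meeting only at \(x\): take an apartment through \(x\) and two transverse lines in that copy of \(\mathbb H^n\), \(n\geq2\). Then \(g\) fixes their intersection, so \(gx=x\), hence \(g=\mathrm{id}\).

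The last assertion needs two things. Type-preserving automorphisms of the chamber system act by isometries of the realization, giving a continuous injective homomorphism \(\Aut(X_\varepsilon)\to\Isom(X_\varepsilon)\). The compact-open topology on the chamber system (pointwise convergence on chambers) agrees with the compact-open topology on isometries: an isometry is determined on a cell by the chamber it hits, and chamber maps are determined by the isometry. Since \(G_\varepsilon\) is closed in \(\Aut(X_\varepsilon)\), it suffices that \(\Aut(X_\varepsilon)\) is closed in \(\Isom(X_\varepsilon)\). This holds because a limit of type-preserving cellular isometries is cellular and type-preserving, the set of chambers being discrete.

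The main obstacle is the geodesic extension in $(\rm i)$, namely making the apartment-extension argument rigorous, i.e. that a geodesic segment is contained in an apartment. I would cite \cite[\S3.4]{Re09} for the convexity of apartments together with the retraction onto an apartment, which is distance-nonincreasing and an isometry on it.
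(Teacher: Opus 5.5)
Your proposal is correct and consistent with the paper, which gives no argument of its own and only refers to \cite[\S3.4]{Re09} and \cite[Chapter~II]{BH99}; your write-up supplies the standard facts behind those citations, namely convexity of apartments via the retraction, compactness of the cone topology for proper $\mathrm{CAT}(-1)$ spaces, uniqueness of the geodesic line between two boundary points (which settles $(\rm iii)$ once two transverse lines through a point are chosen in an apartment), and agreement of the permutation and compact-open topologies on a locally finite building. The apartment-convexity argument alone already gives the extension property in $(\rm i)$, so you can drop the link-homology criterion and its citation.
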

	
	The next lemma shows that an element fixing a large part of the boundary is close to the identity. It is standard but we include a proof for the convenience of the reader. 
	
	\begin{lemma}
		\label{lem:ef-localization}
		Let \(X\) be a proper geodesic Gromov-hyperbolic space with \(|\partial X|\geq3\), and let \(G\leq\Isom(X)\) be closed and act faithfully on \(\partial X\). If \(\nu\) is a (nonatomic) full-support probability measure on \(\partial X\), then
		\[
		\nu\bigl(\Fix_{\partial X}(v_j)\bigr)\longrightarrow1
		\quad\Longrightarrow\quad
		v_j\longrightarrow e
		\]
		for every sequence \((v_j)\) in \(G\).
	\end{lemma}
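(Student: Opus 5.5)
Our plan is a compactness argument in \(\Isom(X)\) combined with the fact that the boundary action detects the identity. We argue by contradiction: if \(v_j\not\to e\), pass to a subsequence staying outside a fixed neighbourhood \(V\) of \(e\) in \(G\). The first step is to show the sequence is bounded, i.e.\ \(d(o,v_jo)\) stays bounded for a basepoint \(o\in X\). Since \(\nu\) is nonatomic with full support and \(\nu(\Fix(v_j))\to1\), for large \(j\) the closed set \(\Fix_{\partial X}(v_j)\) meets every member of a fixed finite family of small open subsets of \(\partial X\). Indeed, each such open set \(O\) has \(\nu(O)>0\), so \(\nu(\Fix(v_j)\cap O)>0\) once \(\nu(\Fix(v_j))>1-\nu(O)\). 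Choose three pairwise disjoint open sets \(O_1,O_2,O_3\subset\partial X\), which is possible since \(|\partial X|\geq3\), \(\partial X\) is Hausdorff and \(\nu\) has full support. For large \(j\) we obtain three distinct fixed points \(\xi^j_i\in O_i\) of \(v_j\). The ideal triangle with these vertices has a coarse centre (a point within bounded distance of all three sides). Since the \(O_i\) have disjoint closures, these centres stay in a bounded region around \(o\), by properness and the hyperbolicity estimates for triangles with vertices separated at infinity. As \(v_j\) permutes nothing but fixes each vertex, it moves the coarse centre by a uniformly bounded amount. Hence \(d(o,v_jo)\le C\) for all large \(j\).

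The second step is to extract a limit. By Arzelà--Ascoli (properness of \(X\)), the set \(\{g\in\Isom(X):d(o,go)\le C\}\) is compact, and \(G\) is closed. So a subsequence \(v_j\to v\in G\) in the compact-open topology, with \(v\notin V\) and hence \(v\ne e\). Convergence in \(\Isom(X)\) implies convergence of the induced homeomorphisms of \(X\cup\partial X\), uniformly on \(\partial X\).

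The third step is to show that \(v\) fixes \(\supp\nu=\partial X\) pointwise, contradicting faithfulness on \(\partial X\). Fix \(\xi\in\partial X\) and a neighbourhood basis \(O\) of \(\xi\). For each \(O\), \(\nu(O)>0\), so \(v_j\) has a fixed point \(\eta_j\in O\) for large \(j\). Uniform convergence then gives \(v\eta_j\) close to \(v_j\eta_j=\eta_j\). Letting \(O\) shrink and using continuity of \(v\), we get \(v\xi=\xi\). Thus \(v\) acts trivially on \(\partial X\), so \(v=e\), a contradiction.

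The main obstacle is the boundedness in the first step: turning three fixed points in well-separated regions into a uniform bound on displacement. We would do this via Gromov products: \(v_j\) fixes \(\xi_1^j,\xi_2^j,\xi_3^j\), and \((\xi_a^j\mid\xi_b^j)_o\) is uniformly bounded because the \(O_i\) have disjoint closures. A standard quasi-centre estimate (a point is moved by at most a constant depending on \(\delta\) and these Gromov products) then bounds \(d(o,v_jo)\). Nonatomicity is not essential to this argument beyond guaranteeing that the three regions can be chosen.
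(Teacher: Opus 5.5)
Your proof is correct and follows essentially the same route as the paper. Your Step~1 is a hands-on proof, via quasi-centres and Gromov products of ideal triangles, of the properness of the $G$-action on triples of distinct boundary points that the paper cites, and your Step~3 replaces the paper's upper-semicontinuity (Fatou) argument for cluster points by a direct density-plus-uniform-convergence argument. As a small bonus, your version uses only full support and $|\partial X|\geq 3$ to produce the three separated fixed points, so nonatomicity of $\nu$ is not needed at all; the paper uses it only to ensure that $\nu^{\otimes 3}$-almost every triple consists of distinct points.
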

	
	\begin{proof}
		Put \(M=\partial X\) and let \(M^{(3)}\subset M^3\) be the space of ordered triples of distinct points. We use the standard fact that the diagonal action of \(G\) on \(M^{(3)}\) is proper, see \cite[Corollary~11.133]{drutu_kapovich18}.
		
		The function \(f(v)=\nu(\Fix_M(v))\) is upper semicontinuous. Indeed, if \(v_j\to v\), then \(\limsup_j\mathbf 1_{\Fix_M(v_j)} \leq\mathbf 1_{\Fix_M(v)}\) pointwise. Applying Fatou's lemma to \(1-\mathbf 1_{\Fix_M(v_j)}\) gives
		\[
		\limsup_j\nu(\Fix_M(v_j))
		\leq
		\int_M\limsup_j\mathbf 1_{\Fix_M(v_j)}\,\dd\nu
		\leq\nu(\Fix_M(v)).
		\]
		
		Since \(\nu\) is nonatomic, \(\nu^{\otimes3}(M^{(3)})=1\). Choose a compact \(C\subset M^{(3)}\) with \(\nu^{\otimes3}(C)>1/2\). If \(f(v_j)\to1\), then \(\nu^{\otimes3}(\Fix_M(v_j)^3)=f(v_j)^3>1/2\) for all large \(j\). Hence $C\cap\Fix_M(v_j)^3\neq\varnothing$, and therefore \(v_jC\cap C\neq\varnothing\). Properness on \(M^{(3)}\) shows that the sequence \((v_j)\) is relatively compact.
		
		If \(v\) is a cluster point, upper semicontinuity gives \(f(v)=1\). Thus \(\Fix_M(v)\) is a closed set of full measure. Since \(\nu\) has full support, it equals \(M\). Faithfulness yields \(v=e\). Therefore \(e\) is the only cluster point of the relatively compact sequence \((v_j)\), so \(v_j\to e\).
	\end{proof}
	
	\subsection{A panel-isotropic random walk}
	We are ready to construct random walks on $\Gamma$ and $G_\pm$ with the appropriate properties. For a finite group $L$, let $u_L$ denote its uniform probability measure. Choose numbers $r_s>0$ with $\sum_{s\in S}r_s=1$ and define
	\begin{align*}
		\sigma=\sum_{s\in S}r_su_{L_s}\in\Prob(\Gamma).
	\end{align*}
	The measure $\sigma$ is symmetric, finitely supported and generating. 
	
	As before let $\mathcal P_s^+$ be the $s$-panel containing $c_+$ and put $P_s^+=\Stab_{G_+}(\mathcal P_s^+)$. This is a compact open subgroup of $G_+$. Since it contains the image of $L_s$, it acts transitively on the finite set $\mathcal P_s^+$.  If $m_{P_s^+}$ denotes its normalized Haar measure, define
	\begin{align*}
		\mu_+=\sum_{s\in S}r_sm_{P_s^+}\in\Prob(G_+).
	\end{align*}
	Notice that the same lattice measure $\sigma$ may be used on the two sides: every finite rank-one group $L_s$ is transitive on both fundamental $s$-panels.  More explicitly, if $\mathcal P_s^-$ is the $s$-panel through $c_-$ and $P_s^-=\Stab_{G_-}(\mathcal P_s^-)$, then the negative ambient measure is
	\begin{equation*}
		\mu_-=\sum_{s\in S}r_sm_{P_s^-}.
	\end{equation*}

	Recall that $K_+=\Stab_{G_+}(c_+)$ is the closure of the image of $B_+$, as in Section~\ref{sec:top-completions}. 
	
	\begin{lemma}
		\label{lem:ef-panel-nonsingular}
		Set $F=\bigcup_{s\in S}P_s^+$, and normalize the left Haar measure $m$ on $G_+$ by $m(K_+)=1$. Then $m(P_s^+)=q_s+1$ and 
		\[
		\frac{\dd\mu_+}{\dd m}
		=\sum_{s\in S}\frac{r_s}{q_s+1}\mathbf1_{P_s^+}.
		\]
		The measure $\mu_+$ is symmetric, compactly supported, has bounded density and is $K_+$-bi-invariant. Moreover $G_+=\bigcup_{n\geq1}F^n$, and every $\mu_+$-stationary probability $\nu$ on a Borel $G_+$-space is $K_+$-invariant and $G_+$-quasi-invariant.
	\end{lemma}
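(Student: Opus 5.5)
The plan is to verify the claims in order: Haar mass of $P_s^+$, the density formula, the elementary properties of $\mu_+$, generation, and finally the stationary-measure statement.

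\emph{Mass and density.} Since $P_s^+$ acts transitively on the finite panel $\mathcal P_s^+$ of $q_s+1$ chambers, and the stabilizer of $c_+$ in $P_s^+$ is $K_+$ (an element fixing $c_+$ is type-preserving, so it stabilizes the $s$-panel through $c_+$), we get $[P_s^+:K_+]=q_s+1$. Hence $m(P_s^+)=q_s+1$. The normalized Haar measure on $P_s^+$ is $m|_{P_s^+}/m(P_s^+)$, because the restriction of a left Haar measure of $G_+$ to an open subgroup is a Haar measure of that subgroup. Summing with weights $r_s$ gives the displayed density.

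\emph{Elementary properties.} Each $m_{P_s^+}$ is symmetric (compact groups are unimodular, so inversion preserves Haar measure), compactly supported, and bi-invariant under $P_s^+\supseteq K_+$. The density is bounded by $\max_s r_s/(q_s+1)$. All these properties pass to the convex combination $\mu_+$.

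\emph{Generation.} The set $F$ is a symmetric compact open neighbourhood of $e$, so $H=\bigcup_{n\ge1}F^n$ is an open, hence closed, subgroup. It contains $K_+$ and the image of every $L_s$. Since the $L_s$ generate $\Gamma$, $H$ contains $\pi_+(\Gamma)$, which is dense, so $H=G_+$. Alternatively, one can use the Bruhat decomposition of $G_+$ with respect to $K_+$, writing $K_+\dot wK_+\subseteq P_{s_1}^+\cdots P_{s_n}^+$ for a reduced word of $w$.

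\emph{Stationary measures.} Let $\nu=\mu_+*\nu$. First, $\nu$ is $K_+$-invariant: for $k\in K_+$ we have $k\nu=(\delta_k*\mu_+)*\nu=\mu_+*\nu=\nu$, by left $K_+$-invariance of $\mu_+$. For quasi-invariance, iterate to get $\nu=\mu_+^{*n}*\nu$ for all $n$. Each $\mu_+^{*n}$ is absolutely continuous with respect to $m$, with a positive density on $F^n$. Indeed, a convolution of absolutely continuous measures is absolutely continuous, and its density on the open set $F^n$ is positive because the density of $\mu_+$ is bounded below on $F$ by $\min_s r_s/(q_s+1)$ and each $F^n$ is a finite union of $K_+$-double cosets. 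Therefore $\eta=\sum_n 2^{-n}\mu_+^{*n}$ is equivalent to $m$, since $G_+=\bigcup_n F^n$, and we still have $\nu=\eta*\nu$. Now if $\nu(A)=0$, then $\int\nu(g^{-1}A)\,\dd\eta(g)=0$, so $\nu(g^{-1}A)=0$ for $m$-a.e.\ $g$. For arbitrary $h$, right translation preserves $m$-null sets up to the modular function, and $\eta*\delta_h\ll m\sim\eta$. Hence $\nu(h^{-1}A)=(\eta*\delta_h*\nu)(A)$, and the same integral argument with $\eta*\delta_h$ gives $\nu(h^{-1}A)=0$. This last step, handling the right translate $\eta*\delta_h$ through equivalence with Haar measure, is the only mildly delicate point. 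It is the standard spread-out argument, and it suffices that $\eta*\delta_h\ll m$ while $\nu(g^{-1}A)=0$ holds for $m$-a.e.\ $g$.
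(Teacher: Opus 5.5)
Your treatment of the Haar mass, the density formula, symmetry and bi-invariance, the identity $G_+=\bigcup_n F^n$, and the $K_+$-invariance of $\nu$ is essentially the paper's argument.

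\textbf{The final step has an error.} The identity $\nu(h^{-1}A)=(\eta*\delta_h*\nu)(A)$ is false. With your convention $(\eta*\nu)(A)=\int\nu(g^{-1}A)\,\dd\eta(g)$, one has $\eta*\delta_h*\nu=\eta*(h_*\nu)$. This equals $h_*\nu$ only when $h_*\nu$ is itself $\eta$-stationary, which fails in general. So showing $(\eta*\delta_h*\nu)(A)=0$ does not give $\nu(h^{-1}A)=0$.

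\textbf{The fix.} Translate the stationarity equation on the left: $h_*\nu=\delta_h*\eta*\nu$, so
\[
\nu(h^{-1}A)=\int_{G_+}\nu(g^{-1}A)\,\dd(\delta_h*\eta)(g).
\]
Here $\delta_h*\eta$ is a left translate of $\eta\ll m$, so it is absolutely continuous with respect to the left Haar measure $m$, with no modular function involved. Since $\nu(g^{-1}A)=0$ for $m$-almost every $g$ (this uses $m\ll\eta$), you get $\nu(h^{-1}A)=0$. The step you flagged as delicate becomes trivial once the translate is on the correct side, and with this correction your argument is complete.

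\textbf{Comparison with the paper.} The two proofs differ in how quasi-invariance is obtained.
\begin{itemize}
\item The paper never computes densities. Given $g$, it takes $n$ with $g\in F^n=\supp(\mu_+^{*n})$. Right $K_+$-invariance gives $\mu_+^{*n}|_{gK_+}=p\,g_*m_{K_+}$ with $p=\mu_+^{*n}(gK_+)>0$. The $K_+$-invariance of $\nu$ then yields the domination $\nu=\mu_+^{*n}*\nu\geq p\,g_*\nu$. This exploits the compact open subgroup $K_+$ and gives the stronger conclusion that $\dd(g_*\nu)/\dd\nu\leq p^{-1}$ is bounded for each $g$.
\item Your spread-out argument needs the extra input $\sum_n 2^{-n}\mu_+^{*n}\sim m$. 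Your justification of the positivity of the density on $F^n$ is a bit loose. The clean reason is that $\mu_+^{*n}$ is $K_+$-bi-invariant, so its density can be taken constant on each of the finitely many open double cosets composing $F^n=\supp(\mu_+^{*n})$.
\item In return, your argument does not use the $K_+$-invariance of $\nu$. It applies verbatim to stationary measures of any measure on a locally compact group whose convolution powers are jointly equivalent to Haar measure.
\end{itemize}
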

	\begin{proof}
		Since \(K_+\leq P_s^+\) and \(P_s^+\) acts transitively on the fundamental \(s\)-panel with chamber stabilizer \(K_+\), we have \([P_s^+:K_+]=q_s+1\). Thus \(m(P_s^+)=q_s+1\) and \(m_{P_s^+}=(q_s+1)^{-1}\mathbf 1_{P_s^+}m\), which gives the
		asserted density. The remaining properties of \(\mu_+\) follow because each \(P_s^+\) is a compact subgroup containing \(K_+\), and \(S\) is finite.
		
		The subgroup \(H=\langle P_s^+\mid s\in S\rangle\) contains the open subgroup \(K_+\), so it is open and closed. It contains every rank-one subgroup \(L_s\), hence also the dense image of \(\Gamma=\langle L_s\mid s\in S\rangle\). Therefore \(H=G_+\). Since \(F\) is symmetric and contains the identity, \(G_+=\bigcup_{n\geq1}F^n\).
		
		Let \(\nu\) be \(\mu_+\)-stationary. For \(k\in K_+\), left \(K_+\)-invariance of \(\mu_+\) gives \(k_*\nu=(k_*\mu_+)*\nu=\mu_+*\nu=\nu\), hence \(\nu\) is \(K_+\)-invariant.
		
		Fix \(g\in G_+\), and choose \(n\) such that \(g\in F^n\). Since \(F\) is compact and \(\supp(\mu_+)=F\), we have \(\supp(\mu_+^{*n})=F^n\). Hence \(\mu_+^{*n}(gK_+)>0\). Right \(K_+\)-invariance implies that, for some \(p>0\), the restriction of \(\mu_+^{*n}\) to \(gK_+\) is \(p\,g_*m_{K_+}\). Consequently,
		\[
		\nu=\mu_+^{*n}*\nu
		\geq p\,(g_*m_{K_+})*\nu
		=p\,g_*\nu,
		\]
		where the last equality uses \(K_+\)-invariance of \(\nu\). Thus \(g_*\nu\ll\nu\). Applying the same argument to \(g^{-1}\) yields the reverse absolute continuity, so \(g_*\nu\sim\nu\).
	\end{proof}
	
	In the following, we always assume that the left Haar measure $m$ on $G_+$ is normalized by $m(K_+)=1$. We chose the measures $\sigma$ and $\mu_+$ so that the induced Markov chains on $\Ch(X_+)$ are the same, as the following lemma shows.  
	\begin{lemma}
		\label{lem:ef-same-chamber-walk}
		Let \((W_n^\sigma)\) and \((W_n^\mu)\) be the right random walks on \(\Gamma\) and \(G_+\), driven respectively by \(\sigma\) and \(\mu_+\). Then the projected processes \((W_n^\sigma c_+)\) and \((W_n^\mu c_+)\) are Markov chains on \(\Ch(X_+)\) with the same transition kernel. If \(\mathcal P_s(d)\) denotes the \(s\)-panel containing \(d\), this kernel is
		\begin{equation}\label{eq:kernel_rw}
			p(d,d')
			=\sum_{s\in S}\frac{r_s}{q_s+1}
			\mathbf 1_{\mathcal P_s(d)}(d').
		\end{equation}
		In particular, the corresponding orbit processes based at \(o_+\) have
		the same law.
	\end{lemma}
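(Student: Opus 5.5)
To prove Lemma~\ref{lem:ef-same-chamber-walk}, I would reduce everything to a single-step computation by exploiting invariance of the increment laws under the chamber stabilizers. For the right random walk \(W_n=Y_1\cdots Y_n\), we have \(W_{n+1}c_+=W_nY_{n+1}c_+\). Conditionally on \(Y_1,\dots,Y_n\), the next chamber is therefore \(g\,Y c_+\) with \(g=W_n\) fixed and \(Y\) independent of law \(\sigma\) or \(\mu_+\). So the plan has three steps.

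\emph{First}, I compute the law of \(Yc_+\). For \(\mu_+\): with probability \(r_s\), \(Y\) is Haar distributed on \(P_s^+\). Since \(P_s^+\) acts transitively on \(\mathcal P_s^+\) with stabilizer \(K_+\) of index \(q_s+1\) (as in Lemma~\ref{lem:ef-panel-nonsingular}), \(Yc_+\) is uniform on \(\mathcal P_s^+\). For \(\sigma\): with probability \(r_s\), \(Y\) is uniform on the finite group \(L_s\). Since \(L_s\) acts transitively on \(\mathcal P_s^+\), the pushforward of \(u_{L_s}\) under an orbit map of a transitive action is again uniform on \(\mathcal P_s^+\). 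Hence both step laws equal \(\sum_s\frac{r_s}{q_s+1}\mathbf 1_{\mathcal P_s^+}\).

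\emph{Second}, I show that the conditional law of \(W_{n+1}c_+\) depends only on \(d=W_nc_+\). Here \(g\) maps \(c_+\) to \(d\) and, being type-preserving, maps \(\mathcal P_s^+\) onto \(\mathcal P_s(d)\). The pushforward of the uniform measure on \(\mathcal P_s^+\) by \(g\) is therefore uniform on \(\mathcal P_s(d)\), independently of the choice of \(g\) with \(gc_+=d\). This gives \eqref{eq:kernel_rw} for both walks. The Markov property follows because the conditional law given the past \(\sigma\)-algebra generated by \(Y_1,\dots,Y_n\) is a function of \(W_nc_+\) alone. Both chains start at \(c_+\), so their laws on path space coincide.

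\emph{Third}, for the orbit processes I note that \(o_+\) is a barycenter of \(c_+\), so \(W_no_+\) is the barycenter of \(W_nc_+\). Type preservation makes this well defined, since the stabilizer of \(c_+\) fixes \(o_+\). Thus \((W_no_+)\) is a deterministic function of the chamber process, and equal laws pass over.

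The only point requiring care is transitivity of \(L_s\) on \(\mathcal P_s^+\) inside \(\Gamma\) and the measurability of the conditioning for \(G_+\). Transitivity is recalled before Lemma~\ref{lem:rgd-rank-one-finite} (\(U_{\alpha_s}\) is sharply transitive on \(\mathcal P_s^+\setminus\{c_+\}\), and \(U_{-\alpha_s}\) moves \(c_+\)). The conditioning is routine via independence of the increments. I expect no real obstacle.
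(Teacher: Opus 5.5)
Your proposal is correct and follows essentially the same route as the paper. You compute the one-step law of \(Yc_+\) from transitivity of \(L_s\) and \(P_s^+\) on \(\mathcal P_s^+\), translate by \(g\) with \(gc_+=d\) to obtain the uniform measure on \(\mathcal P_s(d)\), and pass to orbit processes using that \(K_+\) fixes \(o_+\). Your explicit remarks on independence of the choice of \(g\) and on the Markov property relative to the increment filtration are a bit more careful than the paper's write-up but involve no new idea.
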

	
	\begin{proof}
		Write \(d=gc_+\). Conditional on choosing a panel \(s\), the \(\sigma\)-walk moves from \(d\) to \(g\ell c_+\), where \(\ell\) is uniform in \(L_s\). Since \(L_sc_+=\mathcal P_s^+\) and \(L_s\) acts transitively on this panel, \(\ell c_+\) is uniform on its \(q_s+1\) chambers. Thus the next chamber is uniform on \(g\mathcal P_s^+=\mathcal P_s(d)\).
		
		Now consider the \(\mu_+\)-walk. Again, conditional on choosing the panel \(s\), the increment has normalized Haar distribution on \(P_s^+\). Its image on the finite set \(P_s^+c_+=\mathcal P_s^+\) is likewise uniform. Both walks therefore have the kernel given by Equation~\ref{eq:kernel_rw}.
		
		Finally, the map \(gc_+\mapsto go_+\) is well defined because \(K_+\) fixes \(o_+\). The equality in law of the chamber processes hence implies the equality in law of the corresponding orbit processes.
	\end{proof}
	
	For a chamber $c=gc_+$, set $o_c=go_+$. This is well defined. In the next lemma, recall that we denote by $d_{\rm gal}$ the gallery distance on $\Ch(X_+)$ as in Section~\ref{sec:twin-building} and by $d_X$ the usual CAT$(-1)$ metric on $X_+$. 
	\begin{lemma}
		\label{lem:ef-gallery-growth}
		There exist constants \(A\geq1\) and \(B\geq0\), depending only on the Coxeter tessellation, such that for all chambers \(c,d\),
		\[
		A^{-1}d_{\mathrm{gal}}(c,d)-B
		\leq d_X(o_c,o_d)
		\leq A\,d_{\mathrm{gal}}(c,d)+B.
		\]
		In particular, there exist \(C,a>0\) such that, for every \(R\geq0\),
		\[
		m\{g\in G_+\mid d_X(o_+,go_+)\leq R\}\leq Ce^{aR}.
		\]
		In other words, the action \(G_+\curvearrowright X_+\) has bounded exponential growth.
	\end{lemma}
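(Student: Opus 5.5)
The plan has two parts: a quasi-isometry estimate, and then a volume count obtained from it.

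For the quasi-isometry, I would apply the Švarc--Milnor lemma to the Coxeter group $W$ acting on a single apartment. Fix an apartment $\mathcal A$ containing $c$ and $d$; any two chambers lie in a common apartment. The apartment $\mathcal A$ is isometric to $\mathbb H^n$, tiled by copies of the compact Coxeter polytope. Under this identification, the chambers of $\mathcal A$ correspond to elements of $W$, and the gallery distance in $\mathcal A$ equals the word length $\ell(\delta(c,d))$. Galleries in $\mathcal A$ that are minimal in $\mathcal A$ are also minimal in $X_+$, since $\delta_+$ is computed inside any apartment. The group $W$ acts properly and cocompactly on $\mathbb H^n$, and $o_c$ is a $W$-orbit point in that apartment. Švarc--Milnor therefore gives constants $A,B$ comparing $\ell$ with the hyperbolic distance between barycenters inside $\mathcal A$. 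Two further facts transfer this to $X_+$. First, $\mathcal A$ is isometrically and convexly embedded in $X_+$, which is standard for the Davis realization: retractions onto apartments are $1$-Lipschitz. Second, $o_c$ and $o_d$ are the images of barycenters under a type-preserving isometry. The constants depend only on the tessellation because every apartment is isometric to the same model, so one choice of $A,B$ works uniformly for all pairs.

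For the volume bound, I would use $K_+$-invariance. Since $K_+$ fixes $o_+$, the set $\{g\mid d_X(o_+,go_+)\le R\}$ is a union of left cosets $gK_+$, one for each chamber $d=gc_+$ with $d_X(o_+,o_d)\le R$. Because $m(K_+)=1$, its measure is the number of such chambers. By the lower bound in the first part, these chambers satisfy $d_{\rm gal}(c_+,d)\le A(R+B)$. The number of chambers at gallery distance exactly $k$ is $\sum_{\ell(w)=k}q_w\le (q_{\max})^k|\{w\mid \ell(w)=k\}|\le (q_{\max}|S|)^k$. Summing over $k\le A(R+B)$ gives a bound of the form $Ce^{aR}$ with $a=A\log(q_{\max}|S|)$ and a suitable $C$.

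The main obstacle is the precise justification of the first part. The delicate points are that $o_c$, the barycenter of a chamber in the realization, matches the orbit point used in Švarc--Milnor, and that the CAT$(-1)$ distance in $X_+$ agrees with the distance inside a common apartment. I would cite convexity of apartments in the Davis realization, for instance \cite[\S3.4]{Re09}, rather than reprove it. The counting step is routine.
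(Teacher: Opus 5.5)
Your proposal is correct and follows essentially the same route as the paper. The argument applies Švarc--Milnor to $W$ acting on a model apartment, transfers the estimate to $X_+$ through the isometric embedding of apartments, and then counts the cosets $gK_+$ of Haar measure one indexed by chambers in a gallery ball of radius $A(R+B)$; your explicit bound $\sum_{\ell(w)=k}q_w\leq(q_{\max}|S|)^k$ is simply a concrete version of the paper's appeal to the uniformly bounded degree of the chamber graph.
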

	
	\begin{proof}
		Any two chambers lie in a common apartment. In the model apartment \(\Sigma\), the Coxeter group \(W\) acts properly and cocompactly, and the gallery distance is the word metric associated with \(S\). The Švarc--Milnor lemma therefore shows that the orbit map
		\(W\to\Sigma\), \(w\mapsto wo\), is a quasi-isometry. Since apartments are isometrically embedded in \(X_+\), the same constants give the stated comparison for every pair of chambers.
		
		The displacement set in the second assertion is a disjoint union of cosets \(gK_+\), each of Haar measure one, indexed by chambers \(gc_+\) whose barycentres lie in the metric ball of radius \(R\) about \(o_+\). By the first assertion, all such chambers lie in a gallery ball of radius at most \(A(R+B)\). The chamber graph has uniformly bounded degree, so its gallery balls have at most exponential growth. Hence their cardinalities, and therefore the Haar measures of the displacement sets, are bounded by \(Ce^{aR}\) for suitable \(C,a>0\).
	\end{proof}
	
	The next proposition provides some properties of the hitting measure associated to the random walk on $X_+$. 
	\begin{proposition}
		\label{prop:ef-common-hitting}
		There exists a nonatomic probability measure \(\nu_+\) of full support on
		\(\partial X_+\) such that:
		\begin{enumerate}[$(\rm i)$]
			\item \((\partial X_+,\nu_+)\) is the Poisson boundary of
			\((G_+,\mu_+)\).
			\item \(\nu_+\) is the hitting measure and the unique
			\(\sigma\)-stationary probability measure on \(\partial X_+\).
		\end{enumerate}
		Moreover, \(\nu_+\) is \(K_+\)-invariant and \(G_+\)-quasi-invariant.
	\end{proposition}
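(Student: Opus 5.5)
The plan is to realise the boundary through the general theory of random walks on hyperbolic groups acting geometrically, applied to \(\mu_+\) on \(G_+\). The action \(G_+\curvearrowright X_+\) is proper, cocompact and non-elementary (Lemma~\ref{lem:ef-realization-geometry} and Proposition~\ref{prop:KM-inputs}-type geometric input). By Lemma~\ref{lem:ef-panel-nonsingular}, \(\mu_+\) is compactly supported, symmetric, has a bounded density, and its support generates \(G_+\), so \(\mu_+\) is admissible. The first step is almost-sure convergence, which I would get from the Kaimanovich / Maher--Tiozzo convergence theorem for non-elementary random walks on hyperbolic spaces. It says that for almost every path \(H_n o_+\to\xi\in\partial X_+\). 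The law \(\nu_+\) of \(\xi\) is then the unique \(\mu_+\)-stationary probability measure on \(\partial X_+\), and it is nonatomic.

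The second step identifies the Poisson boundary, which gives (i). I would use Kaimanovich's strip criterion in its locally compact form, as in Kaimanovich's work on hyperbolic groups or Brofferio–Schapira–type extensions to lcsc groups. The inputs are:
\begin{itemize}
\item finite first moment, which is automatic from compact support;
\item the bounded exponential growth of Lemma~\ref{lem:ef-gallery-growth};
\item the reflected walk driven by \(\check\mu_+=\mu_+\), whose hitting measure \(\check\nu_+\) on \(\partial X_+\) is again nonatomic.
\end{itemize}
For \(\nu_+\otimes\check\nu_+\)-almost every pair of distinct boundary points, the strip is the union of geodesic lines joining them, thickened by a fixed constant. It is equivariant, and its ball intersections grow subexponentially, indeed linearly. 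Taking preimages under the orbit map and using Haar measure gives a linearly growing Haar volume, so the strip criterion shows \((\partial X_+,\nu_+)\) is the Poisson boundary. This is the step I expect to be the main obstacle. Most references treat countable groups, so the non-discrete version has to be checked carefully. Measurability of the strips and the Haar-volume estimate are the delicate points, and Lemma~\ref{lem:ef-gallery-growth} is tailored exactly to handle them.

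For (ii), I would use Lemma~\ref{lem:ef-same-chamber-walk}. The \(\sigma\)-walk on \(\Gamma\) and the \(\mu_+\)-walk induce orbit processes based at \(o_+\) with the same law. So \(\pi_+(W^\sigma_n)o_+\) converges almost surely to a point whose law is \(\nu_+\), which makes \(\nu_+\) the \(\sigma\)-hitting measure and hence \(\sigma\)-stationary. Uniqueness follows because \(\sigma\) is a symmetric, finitely supported, generating measure on a countable group acting non-elementarily on the hyperbolic space \(X_+\). The image of \(\Gamma\) is dense in \(G_+\), which acts non-elementarily, and \(\Gamma\) contains loxodromics from the apartment \(W\)-action. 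By Maher--Tiozzo, or Kaimanovich for proper spaces, such a walk has a unique stationary measure on \(\partial X_+\), and it equals the hitting measure.

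Full support comes from minimality: \(G_+\) acts cocompactly and non-elementarily, so the limit set is all of \(\partial X_+\) and the boundary action is minimal. The support of a stationary measure is closed and invariant under the semigroup generated by \(\supp\mu_+\), which is \(G_+\), so it is everything. Finally, \(K_+\)-invariance and \(G_+\)-quasi-invariance come directly from Lemma~\ref{lem:ef-panel-nonsingular} applied to the \(\mu_+\)-stationary measure \(\nu_+\).
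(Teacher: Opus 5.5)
Your outline uses the right ingredients, but it runs the transfer between the two walks in the opposite direction from the paper, and that is exactly where it becomes incomplete. You apply Maher--Tiozzo to the $\mu_+$-walk on $G_+$ to get convergence, nonatomicity and uniqueness. However, \cite[Theorems~1.1 and~1.2]{MT18} are stated for countable groups. For (i) you propose a Haar-measure version of Kaimanovich's strip criterion which, as you say yourself, you have not checked. So as written, (i) is a programme rather than a proof. The geometric half of your strip argument is fine: in the $\mathrm{CAT}(-1)$ realization, two distinct boundary points are joined by a unique geodesic line, so the strips are thickened lines with linear growth. What is missing is the measure-theoretic entropy framework for a nondiscrete group.

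The paper closes this as follows. It applies \cite{MT18} only to the countable group $\Gamma$ with the finitely supported measure $\sigma$. This gives almost-sure convergence, positive drift, and a nonatomic hitting measure that is the unique $\sigma$-stationary measure. Then Lemma~\ref{lem:ef-same-chamber-walk}, which you already use, transfers almost-sure convergence, the hitting measure and positive drift from the $\sigma$-walk to the $\mu_+$-walk. With these in hand, (i) follows from an identification theorem proved for locally compact groups, \cite[Theorem~1.4]{FT22}. Its inputs are the ones you listed (spread-out, compactly supported, generating $\mu_+$; $X_+$ proper; $G_+$ closed in $\Isom(X_+)$; bounded exponential growth from Lemma~\ref{lem:ef-gallery-growth}), together with positive drift. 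Your strip argument, if completed, would amount to reproving that theorem in this case.

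Your full-support argument is a correct alternative to the paper's use of the symmetry of $\sigma$ and the density of $\Gamma$. It uses forward invariance of $\supp\nu_+$ under $\supp\mu_+=F=\bigcup_{s}P_s^+$, which generates $G_+$ as a semigroup by Lemma~\ref{lem:ef-panel-nonsingular}. The $K_+$-invariance and quasi-invariance statements match the paper.
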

	
	\begin{proof}
		Let \((W_n^\sigma)\) and \((W_n^\mu)\) be the right random walks driven
		by \(\sigma\) and \(\mu_+\), respectively, and set
		\(C_n^\sigma=W_n^\sigma c_+\) and \(C_n^\mu=W_n^\mu c_+\). By
		Lemma~\ref{lem:ef-same-chamber-walk}, the two chamber-valued processes
		\((C_n^\sigma)\) and \((C_n^\mu)\) have the same law, and hence the corresponding \(X_+\)-valued orbit processes also have the same law.
		
		The measure \(\sigma\) has finite support and generates \(\Gamma\), whose action on \(X_+\) is nonelementary. By \cite[Theorems~1.1 and~1.2]{MT18}, the \(\sigma\)-walk converges almost surely to \(\partial X_+\), has positive drift, and its hitting measure is nonatomic and is the unique \(\sigma\)-stationary probability measure on \(\partial X_+\).
		
		As the chamber-processes have the same law, the \(\mu_+\)-random walk also has positive drift. By Lemma~\ref{lem:ef-panel-nonsingular}, \(\mu_+\) is spread out, has compact support and finite first moment, and its support generates \(G_+\). Moreover, \(X_+\) is proper hyperbolic, \(G_+\) is closed in \(\Isom(X_+)\), and Lemma~\ref{lem:ef-gallery-growth} gives bounded exponential growth. Thus \cite[Theorem~1.4]{FT22}, applied to the locally compact group \(G_+\), identifies the Poisson boundary of \((G_+,\mu_+)\) with \(\partial X_+\) equipped with its hitting measure.
		
		Since the two orbit processes have the same law, their boundary limits have the same distribution. Denote this common hitting measure by \(\nu_+\). We have proved $(\rm i)$, $(\rm ii)$ and non-atomicity. 
		
		It remains to prove that $\nu_+$ has full support. Let \(C=\supp(\nu_+)\). Since \(\sigma\) is symmetric and every element of its support has positive mass, \(\sigma\)-stationarity implies \(gC=C\) for every \(g\in\supp(\sigma)\). Hence \(C\) is \(\Gamma\)-invariant and, by density, \(G_+\)-invariant. The action of \(G_+\) on \(X_+\) is cobounded and nonelementary, so its limit set is \(\partial X_+\) and its action there is minimal. Therefore \(C=\partial X_+\).
		
		Finally, \(\nu_+\) is \(\mu_+\)-stationary, so Lemma~\ref{lem:ef-panel-nonsingular} makes it \(K_+\)-invariant and \(G_+\)-quasi-invariant.
	\end{proof}
	
	\subsection{The recurrent codistance chain}
	
	Let $(Y_n)_{n\geq1}$ be independent $\Gamma$-valued random variables of law $\sigma$, and denote by $W_n=Y_1\cdots Y_n$ the $\sigma$-random walk starting at $e$. The chamber sequence $(C_n= W_nc_+)_n$ has the kernel of Lemma~\ref{lem:ef-same-chamber-walk}.  Define its codistance from the fixed negative chamber by
	\begin{equation*}
		Q_n=\delta^*(C_n,c_-)\in W.
	\end{equation*}
	We use the convention of \cite[Definition~5.133]{AB08}, under which an $s$-move in the positive building $X_+$ changes the codistance by left multiplication. With the opposite convention, every occurrence of $sw$ below is replaced by $ws$. For $s\in S$, let $\widehat P_s$ be the following kernel on $W$:
	\begin{align}
		\widehat P_s(w,sw)
		&=
		\begin{cases}
			(q_s+1)^{-1},&\ell(sw)=\ell(w)+1,\\
			q_s(q_s+1)^{-1},&\ell(sw)=\ell(w)-1,
		\end{cases}
		\label{eq:ef-Hecke-offdiag}\\
		\widehat P_s(w,w)&=1-\widehat P_s(w,sw).
		\label{eq:ef-Hecke-diag}
	\end{align}
	
	\begin{lemma}
		\label{lem:ef-codistance-kernel}
		The process $(Q_n)$ is a Markov chain with transition kernel
		\[
		\widehat P=\sum_{s\in S}r_s\widehat P_s.
		\]
	\end{lemma}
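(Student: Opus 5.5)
We will show that, conditionally on the whole past $(Y_1,\dots,Y_n)$, the law of $Q_{n+1}$ depends only on $Q_n$ and is given by $\widehat P(Q_n,\cdot\,)$. Fix the past, so that $W_n=g$ and $C_n=gc_+=d$ are determined, and put $w=Q_n=\delta^*(d,c_-)$. The increment $Y_{n+1}$ is independent of the past and has law $\sigma$. By Lemma~\ref{lem:ef-same-chamber-walk}, conditionally on the past and on the choice of the index $s$ (made with probability $r_s$), the next chamber $C_{n+1}$ is uniformly distributed on the $q_s+1$ chambers of the panel $\mathcal P_s(d)$. Indeed, $g\ell c_+$ with $\ell$ uniform in $L_s$ is uniform on $g\mathcal P_s^+=\mathcal P_s(d)$.

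It therefore suffices to compute, for a fixed $s$, the distribution of $\delta^*(d',c_-)$ when $d'$ is uniform on $\mathcal P_s(d)$. This is exactly what Lemma~\ref{lem:rgd-panel-codistance} provides, with the convention of \cite[Definition~5.133]{AB08} that an $s$-move in $X_+$ multiplies the codistance on the left.

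In case $\ell(sw)=\ell(w)+1$, exactly one of the $q_s+1$ chambers has codistance $sw$ and the other $q_s$ have codistance $w$. The probabilities are then $(q_s+1)^{-1}$ for $sw$ and $q_s(q_s+1)^{-1}$ for $w$, which matches \eqref{eq:ef-Hecke-offdiag}--\eqref{eq:ef-Hecke-diag}.

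In case $\ell(sw)=\ell(w)-1$, only $d$ itself has codistance $w$ and the other $q_s$ chambers have codistance $sw$. The probabilities are then $q_s(q_s+1)^{-1}$ for $sw$ and $(q_s+1)^{-1}$ for $w$, which again matches the kernel. Note that $\mathcal P_s(d)$ always contains $d$, so it is indeed the panel to which Lemma~\ref{lem:rgd-panel-codistance} applies.

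Averaging over $s$ with weights $r_s$ gives the conditional law $\sum_s r_s\widehat P_s(w,\cdot\,)=\widehat P(w,\cdot\,)$. This depends only on $w=Q_n$, which establishes both the Markov property and the kernel. Formally, for any function $f$ on $W$ we compute $\mathbf E[f(Q_{n+1})\mid Y_1,\dots,Y_n]=(\widehat Pf)(Q_n)$, and the Markov property with respect to the natural filtration of $(Q_n)$ follows by the tower property.

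The only delicate point is the convention for the codistance. With the paper's coset formula \eqref{eq:rgd-codistance-cosets}, one has to check that a move inside an $s$-panel of $X_+$ acts on $\delta^*(\cdot,c_-)$ by left multiplication. This holds by axiom $(\rm i)$ of Definition~\ref{def:twin-building}, namely $\delta^*(d,c_-)=\delta^*(c_-,d)^{-1}$, combined with the right-multiplication rule $(\rm ii)$--$(\rm iii)$ for moves in the second argument. Inverting turns $ws$ into $sw$. Once this is settled, the remaining steps are routine.
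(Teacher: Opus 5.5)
Your proposal is correct and follows the paper's own argument. With probability $r_s$ the increment makes $C_{n+1}$ uniform on $\mathcal P_s(C_n)$, Lemma~\ref{lem:rgd-panel-codistance} converts this into $\widehat P_s(Q_n,\cdot\,)$, and so the conditional law depends only on $Q_n$. Your check via Definition~\ref{def:twin-building} that moves in $X_+$ act by left multiplication agrees with the convention the paper fixes, and your tower-property step spells out what the paper's terse proof leaves implicit.
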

	
	\begin{proof}
		We chose the measure $\sigma$ with uniform measures on the rank-one subgroups $L_s$. Equations \eqref{eq:ef-Hecke-offdiag} and \eqref{eq:ef-Hecke-diag} therefore follow from Lemma~\ref{lem:rgd-panel-codistance}. In particular the conditional distribution depends only on $Q_n$, proving the Markov property.
	\end{proof}

	\begin{proposition}
		\label{prop:ef-codistance-recurrence}
		The codistance chain \((Q_n)\) is irreducible and positive recurrent.
		In particular,
		\[
		\mathbf P\bigl[Q_n=e\ \text{infinitely often}\bigr]=1.
		\]
	\end{proposition}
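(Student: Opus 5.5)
The chain $(Q_n)$ with kernel $\widehat P=\sum_s r_s\widehat P_s$ is a nearest-neighbour chain on the Cayley graph of $(W,S)$. The plan is to exhibit an explicit stationary probability measure, namely the normalized Poincaré weights $\pi(w)=q_w^{-1}/W(1/q)$ with $q_w=q_{s_1}\cdots q_{s_n}$ for a reduced word. We are assuming $\sum_{w}q_w^{-1}<\infty$; this is the hypothesis $W(1/q_{\min})<\infty$, since $q_w\geq q_{\min}^{\ell(w)}$. So $\pi$ is a probability measure on $W$. Irreducibility together with an invariant probability then gives positive recurrence, and positive recurrence of an irreducible chain gives $Q_n=e$ infinitely often almost surely.

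First, irreducibility. For every $w$ and $s$, $\widehat P_s(w,sw)>0$ (either $(q_s+1)^{-1}$ or $q_s(q_s+1)^{-1}$). Since $r_s>0$, $\widehat P(w,sw)>0$, and $S$ generates $W$, so every $w$ communicates with every other. There are also holding probabilities $\widehat P(w,w)>0$, so the chain is aperiodic, though this is not needed.

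Second, we show $\pi$ is stationary, and in fact reversible, for each $\widehat P_s$ separately; by linearity it is then reversible for $\widehat P$. Fix $s$ and a pair $\{w,sw\}$ with $\ell(sw)=\ell(w)+1$. Then $q_{sw}=q_sq_w$, because a reduced word for $sw$ is $s$ followed by a reduced word for $w$. Detailed balance reads
\[
\pi(w)\widehat P_s(w,sw)=\pi(sw)\widehat P_s(sw,w).
\]
The left side is $q_w^{-1}(q_s+1)^{-1}$, and the right side is $q_s^{-1}q_w^{-1}\cdot q_s(q_s+1)^{-1}$, so they agree. The left cosets $\{w,sw\}$ of $\langle s\rangle$ partition $W$, and $\widehat P_s$ only moves within these pairs, so $\pi$ is $\widehat P_s$-reversible. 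One should check that the convention in Lemma~\ref{lem:ef-codistance-kernel} is consistent with the definition of $q_w$, which uses left multiplication by $s$ on the length. It is, because $q_w$ is independent of the chosen reduced expression. With the opposite convention ($ws$), the same computation works with $q_{ws}=q_wq_s$.

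Finally, an irreducible Markov chain on a countable set admitting a stationary probability measure is positive recurrent, by standard Markov chain theory. Hence $Q_n$ returns to $e$ infinitely often almost surely from any starting point, in particular from $Q_0=\delta^*(c_+,c_-)=e$. The only real point to get right is the sign and length bookkeeping in the detailed balance step, namely that the descending move carries the weight $q_s/(q_s+1)$. That is exactly what Lemma~\ref{lem:rgd-panel-codistance} provides, and it is what makes the summable weights $q_w^{-1}$, rather than the growing weights $q_w$, the invariant ones. Everything else is routine.
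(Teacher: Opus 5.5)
Your proposal is correct and follows the paper's proof essentially verbatim. Like the paper, you use the reversible weights $q_w^{-1}$, check detailed balance on the pairs $\{w,sw\}$ using $q_{sw}=q_sq_w$, get summability from $W(q_{\min}^{-1})<\infty$, prove irreducibility, and conclude with the standard fact that an irreducible chain with an invariant probability is positive recurrent. The only slip is cosmetic: the normalizing constant should be $\sum_{w\in W}q_w^{-1}$ rather than $W(1/q)$ unless all the $q_s$ are equal.
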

	
	\begin{proof}
		Set \(\rho(w)=q_w^{-1}\). If \(\ell(sw)=\ell(w)+1\), then \(q_{sw}=q_sq_w\), and the definition of \(\widehat P_s\) gives
		\[
		\rho(w)\widehat P_s(w,sw) =\rho(sw)\widehat P_s(sw,w).
		\]
		Thus \(\rho\) is reversible for every \(\widehat P_s\), and hence for \(\widehat P=\sum_s r_s\widehat P_s\). Moreover,
		\[
		\sum_{w\in W}\rho(w)
		\leq\sum_{w\in W}q_{\min}^{-\ell(w)}
		=W(q_{\min}^{-1})<\infty,
		\]
		so \(\rho\) can be normalized to an invariant probability measure on $W$.
		The chain is irreducible. Indeed, if \(w\neq e\), the transition probability from $w$ to $sw$ with \(\ell(sw)=\ell(w)-1\) is positive. By iterating, one reaches \(e\) in finite time. Since every such transition also has positive probability in the reverse direction, the chain starting at \(e\) can reach every \(w \in W\).
		
		An irreducible countable-state Markov chain admitting an invariant probability measure is positive recurrent. Hence every state is visited infinitely often almost surely. Applying this to \(Q_0=e\) proves the claim.
	\end{proof}
	
	\subsection{The conjugation martingale}
	
	Let $\xi_+:\Omega\to\partial X_+$ be the almost-sure limit of $W_no_+$ given by Proposition~\ref{prop:ef-common-hitting}. Its law is $\nu_+$.  For $\gamma\in\Gamma$, set
	\[
	\chi_n(\gamma)=\pi_+(W_n^{-1}\gamma W_n)\in G_+.
	\]
	We also define
	\[
	f_+(v)=\nu_+\bigl(\Fix_{\partial X_+}(v)\bigr),
	\]
	for all $v\in G_+$. 
	Before proving that the $\Gamma$-action on $(\partial X_+, \nu_+)$ is essentially free, we shall need the following result on the conjugated walk $\chi_n(\gamma)$. 
	\begin{lemma}
		\label{lem:ef-conjugation-martingale}
		For every $\gamma\in\Gamma$,
		\[
		\mathbf E\!\left[
		\mathbf 1_{\{\gamma\xi_+=\xi_+\}}
		\,\middle|\,Y_1,\ldots,Y_n
		\right]
		=f_+\bigl(\chi_n(\gamma)\bigr).
		\]
		Consequently, almost surely on $\{\gamma\xi_+=\xi_+\}$ one has
		\begin{equation}
			\chi_n(\gamma)\longrightarrow e.
			\label{eq:ef-conjugates-contract}
		\end{equation}
	\end{lemma}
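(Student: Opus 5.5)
We first prove the conditional-expectation identity, and then read off the convergence from the martingale convergence theorem together with Lemma~\ref{lem:ef-localization}.

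\textbf{The identity.} Write $\xi_+=W_n\xi_+^{(n)}$, where $\xi_+^{(n)}$ is the boundary limit of the shifted walk $Y_{n+1}\cdots Y_{n+k}o_+$ as $k\to\infty$. Since $W_n$ is a group element, it acts by an isometry and hence continuously on $\overline{X_+}$, so
\[
\lim_k W_nY_{n+1}\cdots Y_{n+k}o_+=W_n\lim_k Y_{n+1}\cdots Y_{n+k}o_+ .
\]
By independence of increments, $\xi_+^{(n)}$ is independent of $Y_1,\ldots,Y_n$ and has law $\nu_+$. The event $\{\gamma\xi_+=\xi_+\}$ equals
\[
\{W_n^{-1}\gamma W_n\,\xi_+^{(n)}=\xi_+^{(n)}\}=\{\xi_+^{(n)}\in\Fix_{\partial X_+}(\chi_n(\gamma))\},
\]
where $\Gamma$ acts on $\partial X_+$ through $\pi_+$. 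Conditioning on $Y_1,\ldots,Y_n$ and using independence (Fubini, or the freezing lemma), the conditional probability is $\nu_+(\Fix_{\partial X_+}(\chi_n(\gamma)))=f_+(\chi_n(\gamma))$. Measurability is not an issue, because $\chi_n(\gamma)$ takes countably many values.

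\textbf{The convergence.} The left-hand side is a bounded Doob martingale with respect to the filtration $\mathcal F_n=\sigma(Y_1,\ldots,Y_n)$. By Lévy's upward theorem it converges almost surely to $\mathbf E[\mathbf 1_{\{\gamma\xi_+=\xi_+\}}\mid\mathcal F_\infty]$. Since $\xi_+$ is $\mathcal F_\infty$-measurable, this limit is the indicator $\mathbf 1_{\{\gamma\xi_+=\xi_+\}}$ itself. Hence almost surely on $\{\gamma\xi_+=\xi_+\}$ we get $f_+(\chi_n(\gamma))\to1$.

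We then apply Lemma~\ref{lem:ef-localization} pathwise, with $X=X_+$ and $G=G_+$. Its hypotheses hold as follows.
\begin{itemize}
\item $X_+$ is proper geodesic hyperbolic with $|\partial X_+|\ge3$, being CAT$(-1)$ with apartments isometric to hyperbolic space.
\item $G_+$ is closed in $\Isom(X_+)$ and acts faithfully on $\partial X_+$ by Lemma~\ref{lem:ef-realization-geometry}(iii).
\item $\nu_+$ is nonatomic with full support by Proposition~\ref{prop:ef-common-hitting}.
\end{itemize}
The lemma then yields $\chi_n(\gamma)\to e$ on that event.

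\textbf{Main obstacle.} The delicate step is the decomposition $\xi_+=W_n\xi_+^{(n)}$ with $\xi_+^{(n)}$ independent of $\mathcal F_n$ and of law $\nu_+$. It uses the almost sure convergence from \cite{MT18} applied to the shifted walk, which has the same law as the original walk. It also relies on the convention that $\pi_+(\gamma)$ acts on $\partial X_+$ via the extension of the isometric action. Everything else is standard martingale theory.
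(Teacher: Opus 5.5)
Your proposal is correct and follows the paper's proof essentially step for step. The steps are: the tail decomposition $\xi_+=W_n\xi_+^{(n)}$ with $\xi_+^{(n)}$ independent of $\sigma(Y_1,\ldots,Y_n)$ and of law $\nu_+$, the resulting conditional-expectation formula, martingale convergence to the indicator, and then Lemma~\ref{lem:ef-localization} applied pathwise. You also make explicit several points the paper leaves implicit: faithfulness on $\partial X_+$ via Lemma~\ref{lem:ef-realization-geometry}(iii), nonatomicity and full support via Proposition~\ref{prop:ef-common-hitting}, and measurability via countability of $\pi_+(\Gamma)$.
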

	
	\begin{proof}
		Let $\xi_+^{(n)}$ be the limit of the tail walk $Y_{n+1}\cdots Y_mo_+$.  It is independent of $\sigma(Y_1,\ldots,Y_n)$, has law $\nu_+$, and $\xi_+=W_n\xi_+^{(n)}$.  Hence
		\[
		\{\gamma\xi_+=\xi_+\}
		=\{W_n^{-1}\gamma W_n\xi_+^{(n)}=\xi_+^{(n)}\},
		\]
		which gives the conditional-expectation formula. By Doob's martingale convergence theorem, the left-hand side converges almost surely to $\mathbf 1_{\{\gamma\xi_+=\xi_+\}}$.  On the event $\{\gamma\xi_+=\xi_+\}$, we have $f_+(\chi_n(\gamma))\to1$, so Lemma~\ref{lem:ef-localization} gives \eqref{eq:ef-conjugates-contract}.
	\end{proof}
	We are now ready for the main result of this section. 
	\begin{theorem}
		\label{thm:ef-panel-essential-free}
		For every $\gamma\in\Gamma\setminus\{e\}$,
		\begin{equation}
			\nu_+\bigl(\Fix_{\partial X_+}(\gamma)\bigr)=0.
			\label{eq:ef-plus-fixed-null}
		\end{equation}
		Consequently the action $\Gamma\curvearrowright(\partial X_+,\nu_+)$ is essentially free.
	\end{theorem}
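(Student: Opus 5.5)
The plan is to argue by contradiction. I combine the conjugation martingale of Lemma~\ref{lem:ef-conjugation-martingale} with the recurrence of the codistance chain from Proposition~\ref{prop:ef-codistance-recurrence}, and close the argument with the discreteness of $\Gamma$ in $G_+\times G_-$. Fix $\gamma\in\Gamma\setminus\{e\}$ and suppose that $\nu_+(\Fix_{\partial X_+}(\gamma))>0$. Since $\xi_+$ has law $\nu_+$, the event $E=\{\gamma\xi_+=\xi_+\}$ has positive probability. By Lemma~\ref{lem:ef-conjugation-martingale}, almost surely on $E$ we have $\chi_n(\gamma)=\pi_+(W_n^{-1}\gamma W_n)\to e$ in $G_+$. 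By Proposition~\ref{prop:ef-codistance-recurrence}, almost surely $Q_n=\delta^*(W_nc_+,c_-)=e$ for infinitely many $n$. Fix an $\omega\in E$ at which both hold, and let $(n_j)$ be the times with $Q_{n_j}=e$.

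At such a time, $W_{n_j}c_+$ is opposite $c_-$. By $\Gamma$-invariance of $\delta^*$, the chamber $W_{n_j}^{-1}c_-$ is then opposite $c_+$. Since $B_+=\Stab_\Gamma(c_+)$ acts transitively on the chambers opposite $c_+$, and $c_-$ is one of them, there is $\beta_j\in B_+$ with $\beta_jc_-=W_{n_j}^{-1}c_-$. We use the transitivity statement from Section~\ref{sec:twin-building}; passing to the effective quotient $\Gamma$ is harmless. Put $g_j=W_{n_j}\beta_j\in\Gamma$, so that $g_j\in B_-=\Stab_\Gamma(c_-)$. Now consider
\[
\delta_j=g_j^{-1}\gamma g_j=\beta_j^{-1}\bigl(W_{n_j}^{-1}\gamma W_{n_j}\bigr)\beta_j .
\]
In the positive factor, $\pi_+(\beta_j)$ lies in the compact group $K_+$ and $\chi_{n_j}(\gamma)\to e$. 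Conjugation by a compact family is equicontinuous at $e$, so $\pi_+(\delta_j)\to e$. In the negative factor, $\pi_-(g_j)\in K_-$, so $\pi_-(\delta_j)$ lies in the compact set $K_-\pi_-(\gamma)K_-$.

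Therefore all the $\delta_j$ lie in $\Gamma\cap(V\times K_-\pi_-(\gamma)K_-)$ for some fixed compact neighbourhood $V$ of $e$ in $G_+$. This intersection is finite, because $\Gamma$ is a discrete subgroup (a lattice, Proposition~\ref{prop:rgd-twin-lattice}) of $G_+\times G_-$. So the sequence $(\delta_j)$ takes only finitely many values, and every value taken infinitely often has $\pi_+$-image equal to $e$. By injectivity of $\pi_+$, we get $\delta_j=e$ for large $j$, hence $\gamma=g_j\delta_jg_j^{-1}=e$. This is a contradiction, so \eqref{eq:ef-plus-fixed-null} holds. Essential freeness then follows because $\Gamma$ is countable: the union of the null sets $\Fix_{\partial X_+}(\gamma)$ over $\gamma\neq e$ is null.

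The main obstacle is the twin-building step: converting the recurrence $Q_n=e$ into conjugating elements $g_j$ that stabilize $c_-$ while still differing from $W_{n_j}$ only by an element of $B_+$. I expect this to work via transitivity of $B_+$ on the chambers opposite $c_+$. The care needed is to check the convention for $\delta^*$ and that this transitivity holds in $\Gamma$ and not just in $\Lambda$, which should follow since $Z_\Lambda(\Lambda^\dagger)\le T\le B_+\cap B_-$. A secondary point is the uniform continuity of conjugation by $K_+$, which is standard for a compact subgroup of a topological group.
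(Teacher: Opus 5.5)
Your proposal is correct and follows essentially the same argument as the paper, combining recurrence of $Q_n$ at $e$, the contraction $\chi_n(\gamma)\to e$ on the fixed-point event, and transitivity of $B_+$ on chambers opposite $c_+$. Your $\beta_j$ and $g_j$ are the inverses of the paper's $b_j$ and $h_j=b_jW_{n_j}^{-1}\in B_-$, so your $\delta_j$ is exactly the paper's $a_j$, and the conclusion via $\pi_+(\delta_j)\to e$, $\pi_-(\delta_j)\in K_-\pi_-(\gamma)K_-$, discreteness of $\Gamma$ in $G_+\times G_-$ and injectivity of $\pi_+$ is identical.
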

	
	\begin{proof}
		Fix $\gamma\neq e$ and suppose that the event $A_\gamma=\{\gamma\xi_+=\xi_+\}$ has positive probability. Proposition~\ref{prop:ef-codistance-recurrence}
		holds on a conull event, while \eqref{eq:ef-conjugates-contract} holds almost surely on $A_\gamma$. We can then choose a sample path in the intersection of $A_\gamma$ with these two conull events relative to $A_\gamma$. In particular there exist times $n_j\to\infty$ such that $Q_{n_j}=e$. As the codistance is $\Gamma$-invariant,
		\[
		\delta^*(c_+,W_{n_j}^{-1}c_-) =\delta^*(W_{n_j}c_+,c_-)=e.
		\]
		Thus $W_{n_j}^{-1}c_-$ is opposite $c_+$. As $B_+$ acts transitively on the set of negative chambers opposite $c_+$, we may choose $b_j\in B_+$ such that $b_jW_{n_j}^{-1}c_-=c_-$ for all $j$. Set $h_j=b_jW_{n_j}^{-1}\in B_-$ and $a_j=h_j\gamma h_j^{-1}\in\Gamma$. In $G_+$, we have that
		\[
		\pi_+(a_j)
		=\pi_+(b_j)\chi_{n_j}(\gamma)\pi_+(b_j)^{-1}.
		\]
		The elements $\pi_+(b_j)$ belong to the compact group $K_+$. As conjugation on $K_+$ is uniformly continuous, Lemma~\ref{lem:ef-conjugation-martingale} implies that
		\begin{equation}
			\pi_+(a_j)\longrightarrow e.
			\label{eq:ef-plus-aj}
		\end{equation}
		On the other hand, $h_j\in B_-$ gives that
		\begin{equation}
			\pi_-(a_j)
			\in K_-\pi_-(\gamma)K_-,
			\label{eq:ef-minus-aj}
		\end{equation}
		and the latter set is compact.
		
		The set consisting of the convergent sequence $(\pi_+(a_j))_j$ and its limit is compact. Together with Equation~\eqref{eq:ef-minus-aj}, this places every $(\pi_+(a_j),\pi_-(a_j))$ in a fixed compact subset of $G_+\times G_-$.  The diagonal image of $\Gamma$ is discrete, so this compact set meets $\Gamma$ in a finite set.  Passing to a subsequence, there exists $a\in\Gamma$ such that $a_j = a$ for all $j$ large enough. Equation~\eqref{eq:ef-plus-aj} gives $\pi_+(a)=e$, and injectivity of
		$\pi_+|_\Gamma$ gives $a=e$. But $a_j$ is conjugate to the nontrivial element $\gamma$, a contradiction.  This proves Equation~\eqref{eq:ef-plus-fixed-null}.
		
		Finally, as $\Gamma$ is countable,
		\[
		\nu_+\bigl(\{\xi\mid\Gamma_\xi\neq\{e\}\}\bigr)
		\leq\sum_{\gamma\neq e}
		\nu_+(\Fix_{\partial X_+}(\gamma))=0
		\]
		and the action is essentially free. 
	\end{proof}
	
	\subsection{A concrete \texorpdfstring{$\rC^*$}{C*}-simple measure}
	
	We can use the concrete properties of $\sigma$ and the harmonic measure $\nu_+$ to say more about the $\rC^*$-algebraic properties of $\Gamma$. These results are not needed for Theorem~\ref{thm:KM-main-intro}, but can be of independent interest. Following \cite[Definition~5.3]{HK23}, a probability measure $\eta$ on a countable group is \emph{$\rC^*$-simple} if the canonical trace is the unique $\eta$-stationary state on the reduced group $\rC^*$-algebra for the conjugation action.
	
	\begin{corollary}\label{cor:ef-sigma-cstar}
		The measure $\sigma$ is $\rC^*$-simple. 
	\end{corollary}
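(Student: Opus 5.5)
The plan is to invoke the criterion of Hartman and Kalantar \cite[Theorem~4.12]{HK23}. It states that if a generating measure \(\eta\) on a countable group admits a \(\eta\)-boundary on which the action is essentially free (more precisely, a uniquely \(\eta\)-stationary compact model whose stationary measure makes the action essentially free and which is a \(\eta\)-boundary in Furstenberg's sense), then \(\eta\) is \(\rC^*\)-simple. We apply it with \(\eta=\sigma\) and the \(\Gamma\)-space \((\partial X_+,\nu_+)\). The hypotheses on \(\sigma\) itself are immediate from its definition: it is symmetric, finitely supported and generating, since the finite groups \(L_s\) generate \(\Gamma\).

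The verification then has three steps.
\begin{enumerate}[$(\rm i)$]
\item Unique stationarity. By Proposition~\ref{prop:ef-common-hitting}$(\rm ii)$, \(\nu_+\) is the unique \(\sigma\)-stationary probability measure on the compact metrizable space \(\partial X_+\) (Lemma~\ref{lem:ef-realization-geometry}).
\item Boundary property. We must check that \((\partial X_+,\nu_+)\) is a \(\sigma\)-boundary, i.e.\ that for almost every path \(W_n\nu_+\) converges weakly to a Dirac mass. This is standard for nonelementary actions on hyperbolic spaces with nonatomic hitting measure \cite{MT18}: \(W_n\nu_+\to\delta_{\xi_+}\) almost surely, because \(W_no_+\to\xi_+\) and \(\nu_+\) has no atoms. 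In particular \((\partial X_+,\nu_+)\) is a \(\sigma\)-boundary, indeed a quotient of the Poisson boundary of \((\Gamma,\sigma)\).
\item Essential freeness. This is exactly Theorem~\ref{thm:ef-panel-essential-free}.
\end{enumerate}

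The only point requiring care is matching the exact form of the hypotheses in \cite[Theorem~4.12]{HK23}. Depending on the formulation, it may ask for a \(\sigma\)-boundary that is essentially free, or for a unique stationary measure on a compact \(\Gamma\)-space together with freeness. Both are supplied above. If the theorem is stated for the Poisson boundary itself, I would instead use that essential freeness passes up from a \(\Gamma\)-equivariant factor: the Poisson boundary maps onto \((\partial X_+,\nu_+)\), and a nontrivial \(\gamma\) fixing a positive-measure set upstairs would fix its image, contradicting \eqref{eq:ef-plus-fixed-null}. Hence the Poisson boundary action is essentially free too, and the criterion applies, giving that \(\tau_\Gamma\) is the unique \(\sigma\)-stationary state on \(\rC^*_\lambda(\Gamma)\).
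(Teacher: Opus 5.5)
Your proposal is correct and follows the paper's proof essentially step for step. The paper also uses \cite[Proposition~4.16]{MT18} to get $W_n\nu_+\to\delta_{\xi_+}$ and combines it with Proposition~\ref{prop:ef-common-hitting}, so that $(\partial X_+,\nu_+)$ is a uniquely $\sigma$-stationary $\sigma$-boundary; it then takes essential freeness from Theorem~\ref{thm:ef-panel-essential-free} and concludes with \cite[Theorem~4.12]{HK23}, applied directly to this $\sigma$-boundary, so your fallback through the Poisson boundary is not needed.
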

	
	\begin{proof}
		By \cite[Proposition~4.16]{MT18}, $W_n\nu_+$ converges almost surely to $\delta_{\xi_+}$. Together with Proposition~\ref{prop:ef-common-hitting}, this says that $(\partial X_+,\nu_+)$ is a uniquely $\sigma$-stationary $\sigma$-boundary (a $\sigma$-\textup{USB} in the terminology of \cite{HK23}). Its action is essentially free by Theorem~\ref{thm:ef-panel-essential-free}. Hence \cite[Theorem~4.12]{HK23} shows that $\sigma$ is $\rC^*$-simple. 
	\end{proof}
	As the split Kac--Moody groups from Theorem~\ref{thm:KM-main-intro} satisfy the assumptions of this section, note that Corollary~\ref{cor:cstar-measure-intro} is a special case of Corollary~\ref{cor:ef-sigma-cstar}. 
	\begin{remark}
		Since $\sigma$ is generating, the probability measure
		\[
		\widehat\sigma=\sum_{k\geq1}2^{-k}\sigma^{*k}
		\]
		has full support. Moreover, $\widehat\sigma$ and $\sigma$ have the same stationary states on $\rC^*_\lambda(\Gamma)$, so $\widehat\sigma$ is also $\rC^*$-simple. 
	\end{remark}
	
	\begin{corollary}\label{cor:KM-selfless}
		The reduced group $\rC^*$-algebra $\rC^*_\lambda(\Gamma)$ is completely selfless. 
	\end{corollary}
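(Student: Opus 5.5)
The plan is to deduce complete selflessness of \(\rC^*_\lambda(\Gamma)\) from the boundary data already constructed, using the recent criteria for selflessness of reduced group \(\rC^*\)-algebras. Rybak's approach \cite{Ryb26} (in the line of Amrutam--Gao--Kunnawalkam Elayavalli--Patchell) gives complete selflessness whenever \(\Gamma\) admits a topologically free, extreme boundary action in a suitable sense, or more concretely a convergence-type action with enough dynamical freedom. Our input is the \(\rC^*\)-simple measure \(\sigma\) of Corollary~\ref{cor:ef-sigma-cstar} together with the nonelementary action \(\Gamma\curvearrowright\partial X_+\). I would organize the argument so that the selflessness criterion is fed exactly with these two ingredients.

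First, I would check that \(\Gamma\curvearrowright\partial X_+\) is a minimal, strongly proximal (indeed extreme-proximal, via north--south dynamics of loxodromics) action on a compact metrizable space. Minimality was already used in Proposition~\ref{prop:ef-common-hitting}: \(\Gamma\) is nonelementary on the proper hyperbolic space \(X_+\), so its limit set is \(\partial X_+\). Strong proximality and the existence of independent loxodromics with north--south dynamics follow from nonelementarity. Next, I would upgrade the measure-theoretic essential freeness of Theorem~\ref{thm:ef-panel-essential-free} to topological freeness. The fixed set \(\Fix_{\partial X_+}(\gamma)\) is closed, and \(\nu_+\) has full support, so it has empty interior. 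Hence the action is topologically free, and \(\partial X_+\) is a topologically free \(\Gamma\)-boundary in Furstenberg's sense.

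Then I would invoke the selflessness criterion in the form: if a countable group admits a topologically free extreme boundary action (for instance a minimal convergence-like action with loxodromic elements having attracting/repelling fixed points), then \(\rC^*_\lambda(\Gamma)\) is selfless. The ``completely'' part is obtained by applying the same argument to matrix amplifications, or to \(\Gamma\times\Gamma'\)-type free products as in the definition. Alternatively, if the available theorem is phrased via \(\rC^*\)-simple measures, I would use Corollary~\ref{cor:ef-sigma-cstar} and its Remark, producing the full-support \(\rC^*\)-simple measure \(\widehat\sigma\), in the form required there.

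The main obstacle is matching hypotheses exactly: selflessness criteria typically require a genuine rapid-decay or free-probability input (e.g.\ property RD, or a ``mixed identity free'' plus extreme boundary condition). Twin building lattices are nonuniform, so RD is not available. I would therefore try first the mixed-identity-freeness route. From the topologically free action with north--south dynamics of loxodromics, a standard ping-pong argument shows \(\Gamma\) is mixed identity free. Then I would apply the criterion of \cite{Ryb26} for MIF groups with a topologically free extreme boundary, which yields complete selflessness.
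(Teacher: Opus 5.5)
Your first two steps are the paper's argument. The general-type action of $\Gamma$ on $X_+$ with full limit set makes $\partial X_+$ a minimal, extremely proximal, i.e.\ extreme, $\Gamma$-boundary. A small correction: full limit set comes from coboundedness ($\Gamma$ is chamber-transitive on $X_+$), not from nonelementarity alone. Topological freeness follows because each $\Fix_{\partial X_+}(\gamma)$ with $\gamma\neq e$ is a closed $\nu_+$-null set by Theorem~\ref{thm:ef-panel-essential-free}, while $\nu_+$ has full support. The paper phrases this as density of the set of points with trivial stabilizer, which is equivalent by Baire. The paper then finishes in one line by citing \cite[Theorems~1 and~11]{Oza26}, which give complete selflessness of $\rC^*_\lambda(\Gamma)$ from a topologically free extreme boundary action. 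This is the criterion you formulate in your third paragraph, so your proof is essentially done at that point, once the correct source is cited.

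The problem is your last paragraph, which misdiagnoses the final step. The criterion the paper uses needs only the boundary input above; no rapid decay and no mixed-identity-freeness hypothesis enters, so the ``main obstacle'' is not there. The route you then commit to is unsupported. The ping-pong derivation of mixed identity freeness is only asserted. The ``criterion of \cite{Ryb26} for MIF groups with a topologically free extreme boundary'' is not something you have identified. Rybak's selflessness result for these lattices is obtained by quite different means (curtain hyperbolic models and an asymptotic-dimension argument), which is why the paper presents its proof as recovering his. Likewise, ``completely'' is not obtained by amplifying the argument to matrices or to products, as you suggest; it is part of the conclusion of \cite[Theorem~11]{Oza26}. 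Drop the fourth paragraph and the amplification remark, and close with Ozawa's theorem.
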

	
	\begin{proof}
		The action of $\Gamma$ on $X_+$ is of general type and has full limit set $\partial X_+$. Hence the action on $\partial X_+$ is minimal and extremely proximal. Thus $\partial X_+$ is an extreme $\Gamma$-boundary. Let
		\[
		Y=\{\xi\in\partial X_+\mid\Gamma_\xi=\{e\}\}.
		\]
		By Theorem~\ref{thm:ef-panel-essential-free}, $\nu_+(Y)=1$. Since $\nu_+$ has full support, every nonempty open subset of $\partial X_+$ meets $Y$. Hence the boundary action is topologically free. The conclusion follows from \cite[Theorems~1 and~11]{Oza26}.
	\end{proof}
	
	\begin{remark}
		This recovers, in the particular case of the compact-hyperbolic twin-building lattices considered here, the selflessness conclusion of \cite[Proposition~H]{Ryb26}, which was obtained via completely different methods, using curtain hyperbolic models and an asymptotic-dimension argument. 
	\end{remark}
	
	\subsection{Charmenability and charfiniteness}
	
	Applying the preceding construction with the two signs interchanged gives admissible measures $\mu_\pm$ on $G_\pm$, with Poisson boundaries $(\partial X_\pm,\nu_\pm)$. Theorem~\ref{thm:ef-panel-essential-free} gives
	\[
	\nu_\varepsilon
	\bigl(\Fix_{\partial X_\varepsilon}(\pi_\varepsilon(\gamma))\bigr)=0
	\]
	for all $\gamma\in\Gamma\setminus\{e\}$ and $\varepsilon\in\{+,-\}$. 
	
	\begin{corollary}
		\label{cor:ef-panel-charmenability}
		Assume moreover that $\Rad(\Gamma)=\{e\}$ and that $G_+$ and $G_-$
		have the Howe--Moore property.
		Then $\Gamma$ is charmenable.
		If moreover $\Gamma$ has property~{\rm(T)}, it is charfinite.
	\end{corollary}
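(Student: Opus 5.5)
The plan is to apply Theorem~\ref{thm:twin-main} with $G_1=G_+$, $G_2=G_-$ and the admissible measures $\mu_1=\mu_+$, $\mu_2=\mu_-$. Most hypotheses are already available. $G_\pm$ are second countable locally compact groups, being closed in $\Aut(X_\pm)$ (Section~\ref{sec:top-completions}), and they have the Howe--Moore property by assumption. By Proposition~\ref{prop:rgd-twin-lattice} and the standing assumptions of this section, $\Gamma$ is a lattice in $G_+\times G_-$ with dense injective projections. Finally, $\Rad(\Gamma)=\{e\}$ is assumed.

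Next, admissibility of $\mu_\pm$. Lemma~\ref{lem:ef-panel-nonsingular} gives $\mu_+$ a bounded Haar density, so it is non-singular with the Haar measure. The same lemma shows its support $F$ satisfies $G_+=\bigcup_n F^n$, so the support generates $G_+$ as a closed semigroup ($F$ is symmetric and contains $e$). The same argument with the signs interchanged handles $\mu_-$; one only needs $L_s$ to be transitive on $\mathcal P_s^-$ and $K_-\le P_s^-$, both of which hold.

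The remaining hypothesis is $\mu_\varepsilon$-conjugacy escape for $\pi_\varepsilon(\gamma)$, $\gamma\ne e$. This comes from Lemma~\ref{lem:geometric-CE}, applied to $G_\varepsilon\curvearrowright X_\varepsilon$ with $o=o_\varepsilon$ and $H_n$ the $\mu_\varepsilon$-random walk. By Proposition~\ref{prop:ef-common-hitting}(i) and its proof, the walk $H_no_\varepsilon$ converges almost surely to $\partial X_\varepsilon$, with limit law $\nu_\varepsilon$. Here one should note that convergence holds for the $\mu_\varepsilon$-walk itself, not only for the $\sigma$-walk. This follows from \cite{FT22}, or from equality in law of the orbit processes in Lemma~\ref{lem:ef-same-chamber-walk}: almost sure convergence is a property of the law of the path in $X_\varepsilon$. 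Theorem~\ref{thm:ef-panel-essential-free} and its sign-swapped version give $\nu_\varepsilon(\Fix_{\partial X_\varepsilon}(\pi_\varepsilon(\gamma)))=0$ for all $\gamma\ne e$, and Lemma~\ref{lem:geometric-CE} then yields conjugacy escape.

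With all hypotheses verified, Theorem~\ref{thm:twin-main} gives charmenability, and charfiniteness under property~(T). The main point needing care is the sign interchange. On the negative side the codistance chain of Lemma~\ref{lem:ef-codistance-kernel} moves by right multiplication ($ws$ instead of $sw$; see the last sentence of Lemma~\ref{lem:rgd-panel-codistance}). The reversibility argument of Proposition~\ref{prop:ef-codistance-recurrence} is unchanged, since $q_{ws}=q_wq_s$ when lengths add. In the proof of Theorem~\ref{thm:ef-panel-essential-free}, the roles of $B_+,B_-$ and $K_+,K_-$ are swapped, using that $B_-$ is transitive on chambers opposite $c_-$. I expect this to go through verbatim, so the corollary is essentially an assembly of earlier results.
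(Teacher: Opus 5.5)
Your proposal is correct and follows the paper's proof. The paper's argument is: the lattice hypotheses come from Proposition~\ref{prop:rgd-twin-lattice}; Proposition~\ref{prop:ef-common-hitting} and Theorem~\ref{thm:ef-panel-essential-free}, applied in both signs, give admissible walks with essentially free harmonic boundary actions; Lemma~\ref{lem:geometric-CE} gives conjugacy escape; and Theorem~\ref{thm:twin-main} concludes. Your additional checks are sound details that the paper leaves implicit, namely admissibility of $\mu_\pm$, almost sure boundary convergence of the $\mu_\varepsilon$-walk itself, and the sign interchange in the recurrence and opposition arguments.
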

	
	\begin{proof}
		Proposition~\ref{prop:rgd-twin-lattice} gives the lattice hypotheses.
		Proposition~\ref{prop:ef-common-hitting} and
		Theorem~\ref{thm:ef-panel-essential-free}, applied in both signs, give
		admissible random walks with essentially free harmonic boundary actions.
		Lemma~\ref{lem:geometric-CE} supplies conjugacy escape, so
		Theorem~\ref{thm:twin-main} applies.
	\end{proof}
	
	\begin{proof}[Proof of Theorem~\ref{thm:KM-main-intro}]
		We just need to verify that $\Gamma $ satisfies the assumptions of Corollary~\ref{cor:ef-panel-charmenability}. By Proposition~\ref{prop:KM-inputs}, $\Gamma $ is a finitely presented irreducible lattice in $G_+ \times G_-$, is simple, and has property~{\rm (T)}. Moreover $G_\pm$ have the Howe--Moore property. Since $q>3$, the discussion following \cite[Theorem~20]{CR09} gives $\Lambda=\Lambda^\dagger$. Hence Lemma~\ref{lem:rgd-rank-one-finite} shows that the images of the groups $L_s$ generate $\Gamma$. Each $L_s$ acts transitively on both fundamental $s$-panels. Finally, \cite[Lemma~5.1]{CT26} gives $W(1/q)<\infty$. Thus $\Gamma$ satisfies all of the assumptions of this section. Since $\Gamma$ is simple and nonamenable, $\Rad(\Gamma)=\{e\}$. Corollary~\ref{cor:ef-panel-charmenability} shows that $\Gamma$ is
		charmenable and charfinite.
	\end{proof}
	
	\subsection{Concrete rigidity consequences}
	In this section we gather some interesting rigidity properties of the twin-building lattice $\Gamma$. 
	\begin{proof}[Proof of Corollary~\ref{cor:KM-main}]
		By Theorem~\ref{thm:KM-main-intro}, the group $\Gamma$ is charfinite and hence
		charmenable. It is nonamenable because it is infinite and has
		property~{\rm (T)}. Its simplicity therefore implies
		\(\Rad(\Gamma)=\{e\}\). Let
		\(\tau\in\Char(\Gamma)\). By charmenability, either
		\(\tau=\delta_e\), or \(\tau\) is amenable and hence finite dimensional by
		charfiniteness. Every finite dimensional unitary representation of
		\(\Gamma\) is trivial. Indeed, a nontrivial one would be faithful by
		simplicity. Since \(\Gamma\) is finitely generated, Mal'cev's theorem
		\cite{Ma40} would make it residually finite, which is impossible for an
		infinite simple group. Hence, in the latter case, \(\tau=1_\Gamma\). Since an infinite simple group
		is icc, \(\delta_e\) is a character.
		
		We now derive the \textup{IRS} assertion from this character rigidity. Let
		\(\eta\) be an ergodic \textup{IRS} of \(\Gamma\). By
		\cite[Proposition~1.4]{AGV14}, it is the stabilizer distribution of a
		probability-measure-preserving action
		\(\Gamma\curvearrowright(Y,m)\). Since \(\eta\) is ergodic, almost every
		ergodic component has the same stabilizer distribution. We may therefore
		assume that the action is ergodic. If \(Y\) has an atom, then, modulo null
		sets, \(Y\) is a finite transitive \(\Gamma\)-space and \(\eta\)-almost every
		subgroup has finite index.
		
		Suppose that \(Y\) is diffuse. Then almost every orbit is infinite.
		Proposition~3.1 of \cite{PT16} shows that the orbit-relation representation on
		\(\rL^2(\mathcal R)\) is weakly mixing. For every \(g\in\Gamma\), the
		coefficient of its diagonal unit vector is the fixed-point trace
		\(\chi_\eta(g)=m(\{y\in Y\mid gy=y\})
		=\eta(\{H\in\operatorname{Sub}(\Gamma)\mid g\in H\})\).
		Consequently, the GNS representation of \(\chi_\eta\), being a cyclic
		subrepresentation of the orbit-relation representation, is weakly mixing.
		Since \(1_\Gamma\) and \(\delta_e\) are the only extreme traces, the
		Krein--Milman theorem gives
		\(\chi_\eta=t\,1_\Gamma+(1-t)\delta_e\) for some \(t\in[0,1]\).
		If \(t>0\), its GNS representation contains the trivial representation.
		Therefore \(t=0\), so the action is essentially free and
		\(\eta=\delta_{\{e\}}\). This is the argument of
		\cite[Theorem~3.2 and the discussion following Corollary~3.4]{PT16}.
		
		It remains to treat the atomic case. An infinite simple group has no proper
		finite-index subgroup. The core of such a subgroup would be a finite-index
		normal subgroup, and a trivial core would embed \(\Gamma\) into a finite
		symmetric group. Hence \(\eta=\delta_\Gamma\). By ergodic decomposition,
		every \textup{IRS} is a convex combination of these two measures.
		
		Finally, let \(X\subset\operatorname{Sub}(\Gamma)\) be a \textup{URS} of the
		split Kac--Moody group. Since $\Gamma$ is charfinite and has trivial
		amenable radical, \cite[Proposition~3.5]{BBHP22} shows that \(X\) is finite.
		The conjugation
		action on \(X\) defines a homomorphism from \(\Gamma\) to
		\(\operatorname{Sym}(X)\). Simplicity and infinitude force this homomorphism
		to be trivial. Hence every \(H\in X\) is normal in \(\Gamma\). By minimality,
		\(X\) is a singleton, and simplicity gives \(X=\{\{e\}\}\) or
		\(X=\{\Gamma\}\).
	\end{proof}
	
	\begin{proof}[Proof of Corollary~\ref{cor:unitary-intro}]
		As shown in the proof of Corollary~\ref{cor:KM-main}, the group \(\Gamma\)
		is charmenable, has trivial amenable radical, and every finite-dimensional
		unitary representation of \(\Gamma\) is trivial. Moreover, \(\Gamma\) has
		property~{\rm (T)} by Proposition~\ref{prop:KM-inputs}.
		
		Let \(\rho:\Gamma\to\cU(\mathscr H)\) be a nontrivial unitary
		representation. Then \(\mathscr H_0=\mathscr H\ominus\mathscr H^\Gamma\)
		is nonzero, and the restriction \(\rho_0\) of \(\rho\) to \(\mathscr H_0\)
		has no nonzero finite-dimensional subrepresentation. Thus $\rho_0$ is weakly
		mixing. By \cite[Theorem~1]{BV93}, \(\rho_0\) is nonamenable.
		It follows from \cite[Proposition~3.7]{BBHP22} that \(\rho_0\) weakly
		contains \(\lambda_\Gamma\), and hence so does \(\rho\).
		By the definition of weak containment, the map
		\(\rho_0(\gamma)\mapsto\lambda_\Gamma(\gamma)\) extends to a
		surjective $*$-homomorphism
		\[
		\Theta_{\rho_0,\lambda}:\rC^*_{\rho_0}(\Gamma)
		\longrightarrow\rC^*_\lambda(\Gamma).
		\]
		By \cite[Proposition~3.7]{BBHP22},
		\(\tau_\Gamma\circ\Theta_{\rho_0,\lambda}\) is the unique tracial state on
		\(\rC^*_{\rho_0}(\Gamma)\), and every proper ideal is contained in its
		left kernel. Since $\tau_\Gamma$ is faithful on
		$\rC^*_\lambda(\Gamma)$, this left kernel is
		$\ker(\Theta_{\rho_0,\lambda})$. Hence
		$\ker(\Theta_{\rho_0,\lambda})$ is the unique proper maximal ideal.
	\end{proof}

\end{document}